\renewcommand{\ker}{\Ker}
\newcommand{\mc}[1]{\mathcal{#1}}
\newcommand{\mf}[1]{\mathfrak{#1}}
\newcommand{\mb}[1]{\mathbb{#1}}
\newcommand{\id}{\mathbbm{1}}
\DeclareMathOperator{\Mat}{Mat}
\DeclareMathOperator{\Hom}{Hom}
\DeclareMathOperator{\im}{Im}
\DeclareMathOperator{\Der}{Der}
\DeclareMathOperator{\Ker}{Ker}
\DeclareMathOperator{\sign}{sign}
\DeclareMathOperator{\CEnd}{CEnd}
\DeclareMathOperator{\RCEnd}{RCEnd}
\DeclareMathOperator{\CDer}{CDer}
\DeclareMathOperator{\RCDer}{RCDer}
\DeclareMathOperator{\CHom}{CHom}
\DeclareMathOperator{\RCHom}{RCHom}
\DeclareMathOperator{\Cas}{Cas}
\theoremstyle{plain}
\newtheorem{theorem}{Theorem}[section]
\newtheorem{lemma}[theorem]{Lemma}
\newtheorem{proposition}[theorem]{Proposition}
\newtheorem{corollary}[theorem]{Corollary}
\theoremstyle{definition}
\newtheorem{definition}[theorem]{Definition}
\newtheorem{example}[theorem]{Example}
\theoremstyle{remark}
\newtheorem{remark}[theorem]{Remark}
\numberwithin{equation}{section}
\definecolor{light}{gray}{.9}
\begin{document}

\title[LCAd cohomology]{Cohomology of Lie conformal algebroids}

\author{Alberto De Sole}
\address{Dipartimento di Matematica  \& INFN, Sapienza Universit\`a di Roma,
P.le Aldo Moro 5, 00185 Rome, Italy}
\email{desole@mat.uniroma1.it}
\urladdr{www1.mat.uniroma1.it/\$$\sim$\$desole}
\author{Jiefeng Liu}
\address{School of Mathematics and Statistics, Northeast Normal University, Changchun, 130024, China}
\email{liujf534@nenu.edu.cn}
\author{Daniele Valeri}
\address{Dipartimento di Matematica \& INFN, Sapienza Universit\`a di Roma,
P.le Aldo Moro 5, 00185 Rome, Italy}
\email{daniele.valeri@uniroma1.it}



\begin{abstract}
We study Lie conformal algebroids (LCAd) and their representations using the language of $\lambda$-brackets and Lie conformal algebras. 
We describe several general constructions, such as the LCAd of conformal derivations $\CDer(A)$ of a differential algebra $A$,
the gauge LCAd $\mc G(A,M)$ associated to a differential algebra $A$ and its module $M$,
the current LCAd $\widehat{F}$ of a Lie algebroid $F$,
and the LCAd structure of the space $\Omega(\mc V)$ of K\"ahler differentials over a Poisson vertex algebra (PVA) $\mc V$.
We develop the cohomology theories of LCAd 
and we relate them to the corresponding cohomology theories of PVA.
In particular, we find an isomorphism between the cohomology of a PVA $\mc V$ with coefficients in a module $M$
and the corresponding cohomology of the LCAd $\Omega(\mc V)$ with coefficients in the same module.
\end{abstract}

\keywords{
Lie conformal algebroids, Lie algebroids, Lie conformal algebras, Poisson vertex algebras, cohomology theories, K\"ahler differentials.
}

\maketitle


\section{Introduction}\label{sec:intro}

Lie conformal algebras and Poisson vertex algebras are algebraic structures that naturally arise in the formalism of integrable systems, conformal field theory, and the representation theory of infinite-dimensional Lie algebras.

Poisson vertex algebras (PVA), also known as Coisson algebras in the language of \cite{BD}, 
are obtained as the classical limit of vertex algebras and 
provide an algebraic framework for the theory of Hamiltonian evolutionary partial differential equations \cite{BSK}.
They generalize the notion of Poisson algebras in the context of Lie conformal algebras (LCA), encoding the local Poisson bracket of local functionals via the PVA $\lambda$-bracket. It is therefore not surprising that they have become central objects in the algebraic theory of integrable systems and the formal calculus of variations.

Cohomology theories for PVA were systematically developed by the first author and Kac \cite{DK11a,DK11b}. In these papers it was introduced the variational PVA complex and its covering complex, called the basic complex.
The variational PVA cohomology has deep relations both with the deformation theory of PVA and with the theory of integrable systems. As an example, given a differential algebra
$\mc V$  endowed with two compatible PVA structures, the Lenard-Magri scheme of integrability can be infinitely extended, provided that the first PVA cohomology of $\mc V$ vanishes for one of the two structures.

In \cite{BDSHK1,BDSHK2} cohomology theories for both vertex algebras (VA) and PVA were developed through an operadic approach,
leading to the so called classical cohomology complex for PVA.
In those and the subsequent papers  \cite{BDHKV,BDK,BDK2} computational techniques and the connection between the classical and the variational PVA complexes were also explored. These developments have led to a coherent picture of the deformation theory of PVA, and
have clarified the structure and applications of these cohomologies, placing them in close relation to the cohomologies of LCA and VA \cite{BDK2,BKV,DK09}.

On the other hand, the theory of LCA, initiated in \cite{DAK}, has evolved into a broader framework encompassing their modules and interactions with differential structures. This leads naturally to the notion of a Lie conformal algebroid (LCAd) \cite{DK09}, a conformal analogue of a Lie algebroid (LAd).
LCA's and LCAd's correspond to Lie$^*$ algebras and Lie$^*$ algebroids, respectively, in the terminology of \cite{BD},
where such structures are introduced within the framework of pseudotensor categories (and their generalization, compound tensor categories).
By definition, a \emph{Lie conformal algebroid} over a differential algebra $A$ is a left $A[\partial]$-module $E$, endowed with an LCA $\lambda$-bracket
$[\cdot\,_\lambda\,\cdot]:\,E\otimes E\to E[\lambda]$
and with a left $A[\partial]$-module homomorphism $\theta:E\to\CDer(A)$, called the anchor map, such that
(cf. Definition \ref{def:LCAd}):
\begin{enumerate}[(i)]
\item
$[u_\lambda av]=\theta(u)_\lambda(a) v+ a[u_\lambda v]$,
\item
$\theta([u_\lambda v])_\mu(a)
= \theta(u)_\lambda(\theta(v)_{\mu-\lambda}(a))-\theta(v)_{\mu-\lambda}(\theta(u)_\lambda(a))$,
\end{enumerate}
for $u,v\in E$ and $a\in A$.
Here $\CDer(A)$ is the space of conformal derivations of $A$, i.e. maps $\phi_\lambda:\,A\to A[\lambda]$
such that $\phi_\lambda(\partial a)=(\partial+\lambda)\phi_\lambda(a)$ and $\phi_\lambda(ab)=\phi_\lambda(a)b+a\phi_\lambda(b)$.
It has a left $A[\partial]$-module structure given by $(a\phi)_\lambda=(|_{x=\partial}a)\phi_{\lambda+x}$ and $(\partial\phi)_\lambda=-\lambda\phi_\lambda$.
It also has a (not necessarily polynomial valued) $\lambda$-bracket
given by $[\phi_\lambda\psi]_\mu(a)=\phi_\lambda(\psi_{\mu-\lambda}(a))-\psi_{\mu-\lambda}(\phi_\lambda(a))$,
satisfying the LCA axioms.
Hence, condition (ii) above is saying that the anchor map $\theta$ maps the $\lambda$-bracket of $E$
to the $\lambda$-bracket of $\CDer(A)$.
Note that an LCAd over the base field $\mb F$ is the same as an LCA.
Recall also that a \emph{module} over a LCAd $E$ (over the differential algebra $A$)
is a left $A[\partial]$-module endowed with a LCA $\lambda$-action of $E$ on $M$,
denoted $u\in E, m\in M\mapsto u_\lambda M$, satisfying the following conditions:
(i) $u_\lambda(a(\partial)m)=a(\lambda+x)\big(\big|_{x=\partial}u_\lambda m\big)
+\theta(u)_\lambda(a(x))\big(\big|_{x=\partial}m\big)$;
(ii) $(a(\partial)u)_\lambda m = \big(\big|_{x=\partial}a^*(\lambda)\big)(u_{\lambda+x}m)$.

While, as said above, the cohomology theories of LCA and PVA have been extensively investigated, a systematic study of LCAd cohomology was still missing.
The main purpose of the present paper is to fill this gap.

The definition of an LCAd is reviewed in Section \ref{sec:2}, together with some basic constructions and several illustrative examples:
we provide $\CDer(A)$ with a structure of an LCAd (Example \ref{ex:CDer-LCAd}), we present the \emph{gauge} LCAd $\mc G(A,M)$
associated to an $A[\partial]$-module $M$ (Section \ref{sec:2.15}), and the \emph{current} LCAd $\widehat{F}$ associated to a LAd $F$ 
(Section \ref{sec:2.2b}).
Another example, the LCAd of \emph{K\"ahler differentials} $\Omega(\mc V)$, associated to a PVA $\mc V$,
is actually presented in Section \ref{sec:5}, as it provides a bridge between PVA cohomology and LCAd cohomology.

Recall that \cite{H77,J69} the space $\Omega(A)$ of K\"ahler differentials of a commutative associative algebra $A$ is the unique
$A$-module endowed with a derivation $d:\,A\to M$ satisfying the following universal property:
for every $A$-module $M$ and every derivation $\delta:\,A\to M$, there is a unique $A$-module homomorphism
$\widetilde{\delta}:\,\Omega(A)\to M$ such that $\widetilde\delta\circ d=\delta$ (cf. the diagram \eqref{eq:diagram-delta}).
Explicitly, $\Omega(A)$ is spanned by elements of the form $ad(b)$, for $a,b\in A$.
In the special case when $A$ is a differential algebra, the space $\Omega(A)$ become a left $A[\partial]$-module,
with the action of $\partial$ given by $\partial(ad(b))=(\partial a)d(b)+ad(\partial b)$.
In this case, if $M$ is a left $A[\partial]$-module
and $\delta:\,A\to M$ is a derivation commuting with $\partial$,
then the map $\widetilde\delta:\,\Omega(A)\to M$ defined by the universal property
is an $A[\partial]$-module homomorphism, see Proposition \ref{prop:univ-prop1}.
Furthermore, if $\mc V$ is a PVA, the left $\mc V[\partial]$-module $\Omega(\mc V)$
acquires a natural structure of a LCAd,
with the $\lambda$-bracket defined on generators by
$[da_\lambda db]=d(\{a_\lambda b\})$ (for $a,b\in\mc V$),
and with the anchor map defined on generators by $\theta(da)_\lambda(b)=\{a_\lambda b\}$ (for $a,b\in\mc V$),
see Theorem \ref{thm:kahlerLCAd}.
Moreover, a module $M$ over the PVA $\mc V$
is naturally a module over the LCAd $\Omega(\mc V)$, and viceversa, see Proposition \ref{prop:PVA-LCAd-modules}.
The main result of the paper, Theorem \ref{thm:main},
provides an explicit isomorphism between the various cohomology complexes associated to the PVA $\mc V$
and its module $M$,
and the corresponding cohomology complexes associated to the LCAd $\Omega(\mc V)$
and the same module $M$.

In fact, there are three different variations of PVA cohomology theories, see e.g. \cite{DK11a},
based on the basic, the reduced and the variational complexes,
which are reviewed in Sections \ref{sec:6.1}-\ref{sec:6.2}.
Analogously, we introduce in Section \ref{sec:3} three different variations of LCAd cohomology theories,
called the basic, the reduced and the LCAd complexes.
These complexes naturally extend the corresponding LCA complexes of \cite{BKV}-\cite{DK09} 
by taking into account the presence of the anchor map.
Their construction is also motivated by analogy with the Chevalley-Eilenberg cohomology for LAd, see e.g. \cite{Mac}.
The resulting cohomology theory provides a natural setting in which to study derivations, abelian extensions, and formal deformations of LCAd.

Explicitly, the \emph{LCAd cohomology complex} of a LCAd $E$ with coefficients in a module $M$ is defined as follows.
The space $C^k(E,M)$ of $k$-cochains consists of maps 
\begin{equation}\label{eq:k-cochains-intro}
\varphi_{\lambda_1,\dots,\lambda_k}:\,E^{\otimes k}\to M[\lambda_1,\dots,\lambda_k]/\langle\partial+\lambda_1+\dots+\lambda_k\rangle
\,,
\end{equation}
mapping $u_1\otimes\dots\otimes u_k\mapsto\varphi_{\lambda_1,\dots,\lambda_k}(u_1,\dots,u_k)$,
satisfying conditions \eqref{eq:poly-lambda} and \eqref{eq:poly-skew}.
For example, $C^0(E,M)\simeq M/\partial M$ and $C^1(E,M)\simeq \Hom_{A[\partial]}(E,M)$.
The differential $d:\,C^k(E,M)\to C^{k+1}(E,M),\,k\geq0$ is defined by
\begin{equation}\label{eq:differential-intro}
\begin{split}
& 
(d\varphi)_{\lambda_1,\dots,\lambda_{k+1}}(u_1,\dots,u_{k+1})
=
\sum_{i=1}^{k+1}(-1)^{i+1}
{u_i}_{\lambda_i}\Big(
\varphi_{\lambda_1,\stackrel{i}{\check\dots},\lambda_{k+1}}(u_1,\stackrel{i}{\check\dots},u_{k+1})
\Big) \\
&\qquad +
\sum_{\substack{i,j=1 \\ i<j}}^{k+1}(-1)^{i+j}
\varphi_{\lambda_i+\lambda_j,\lambda_1,\stackrel{i}{\check\dots}\stackrel{j}{\check\dots},\lambda_{k+1}}
\big([{u_i}_{\lambda_i}{u_j}],u_1,\stackrel{i}{\check\dots}\stackrel{j}{\check\dots},u_{k+1}\big)
\,.
\end{split}
\end{equation}
We show in Proposition \ref{prop:LCAd-cohomology} that, indeed, $d$ is well defined and $d\circ d=0$,
making $(C^\bullet(E,M),d)$ a cohomology complex.
Similarly, the \emph{basic LCAd cohomology} complex $\widetilde C^\bullet(E,M)$ of $E$ with coefficients in $M$
is defined by taking maps as in \eqref{eq:k-cochains-intro} but with values
in $M[\lambda_1,\dots,\lambda_k]$ instead of the quotient by $\langle\partial+\lambda_1+\dots+\lambda_k\rangle$,
and the formula for the differential $\widetilde{d}$ is again \eqref{eq:differential-intro}.
This complex has a natural $\mb F[\partial]$-module structure, with the action of $\partial$ given by
\begin{equation}\label{eq:partial-basic-intro}
(\partial\widetilde\varphi)_{\lambda_1,\dots,\lambda_k}(u_1,\dots,u_k)
=
(\lambda_1+\dots+\lambda_k+\partial)
\widetilde\varphi_{\lambda_1,\dots,\lambda_k}(u_1,\dots,u_k)
\,,
\end{equation}
and the differential $\widetilde{d}$ commutes with this action of $\partial$, see Proposition \ref{prop:reduced-LCAd-cohomology}.
Hence, we can quotient by the action of $\partial$, to get the \emph{reduced LCAd cohomology complex}
$(\overline C^\bullet(E,M),\overline{d})$.

As mentioned above, Theorem \ref{thm:main} states that, given a PVA $\mc V$ and a PVA module $M$, there is a natural isomorphism
of complexes, hence of cohomologies,
$$
H_{\textrm{var}}(\mc V,M)\simeq H_{\textrm{LCAd}}(\Omega(\mc V),M)
\,,
$$
between the variational PVA cohomology of $\mc V$ with coefficients in $M$
and the LCAd cohomology of $\Omega(\mc V)$, again with coefficients in $M$.
The same result holds for the basic and the reduced cohomology theories, see Remark \ref{rem:5.3}.

This paper lays the ground for several directions of study, such as applications to deformation theory and quantization. In particular, there should be an analogue correspondence between the cohomology theories of a vertex algebra, which is the quantization of a PVA, and the corresponding cohomology theories of a quantization of a LCAd. This should be related to vertex (or chiral) algebroids \cite{BD,GMS}.

Throughout the paper all vector fields, linear maps, tensor products, etc.
are considered over a field $\mb F$ of characteristics $0$.

\section*{Acknowledgments}
The present research was carried out during the visit of J. Liu at the University of Rome “La Sapienza”, and he gratefully acknowledges the hospitality of the Mathematics Department. 
J. Liu is supported by the National Key Research and Development Program of China (2022T150109), the China Scholarship Council (202406620125) and NSFC (12371029, W2412041).
A. De Sole and D.Valeri are members of the GNSAGA INdAM and of the project MMNLP (Mathematical Methods in Non Linear Physics) of INFN.
A.~De Sole has been supported by the national PRIN grant 2022S8SSW2 of MUR.
D. Valeri  has been supported by the national PRIN grant 2022HMBTTL of MUR.

\section{Lie conformal algebroids}\label{sec:2}

\subsection{Conformal endomorphisms, conformal derivations, and their LCA structures}\label{sec:2.10}

Let $A$ be a \emph{differential algebra}. 
By this we mean a commutative, associative, unital algebra
with a derivation $\partial:\,A\to A$.
Associated to it we have the associative algebra $A[\partial]$ of (polynomial) differential operators,
with product given by composition.

The following notation will be used throughout the paper:
given $a(\partial)=\sum_ia_i\partial^i\in A[\partial]$
and $b,c\in A$, we let:
\begin{equation}\label{eq:notation}
a(\lambda+x)\big(\big|_{x=\partial}b)c
=c\sum_{i=0}^Na_i(\lambda+\partial)^i(b)\in A[\lambda]
\,.
\end{equation}
For example, the symbol of the adjoint operator $a^*(\partial)=\sum_i(-\partial)^i\circ a_i\in A[\partial]$
is $a^*(\lambda)=\big(\big|_{x=\partial}a(-\lambda-x)\big)$.

Recall that a \emph{(left) conformal homomorphism} of $\mb F[\partial]$-modules from $M$ to $N$
is a linear map $\phi_\lambda:\,M\to N[\lambda]$ 
satisfying $\phi_\lambda(\partial u)=(\partial+\lambda)\phi_\lambda(u)$.
We denote by $\CHom(M,N)$ the space of conformal homomorphisms from $M$ to $N$
and by $\CEnd(M)=\CHom(M,M)$.
It has a structure of an $\mb F[\partial]$-module, with the action of $\partial$ given by 
$(\partial\phi)_\lambda=-\lambda\phi_\lambda$.
More in general, if $M$ and $N$ are $A[\partial]$-modules,
a (left) conformal homomorphism of $A[\partial]$-modules from $M$ to $N$
is conformal homomorphism of $\mb F[\partial]$-modules commuting with the action of $A$,
i.e. $\phi_\lambda(a(\partial) u)=a(\partial+\lambda)\phi_\lambda(u)$ for $a(\partial)\in A[\partial]$.
We denote by $\CHom_A(M,N)$ the space of (left) conformal homomorphisms 
of $A[\partial]$-modules from $M$ to $N$.
Both $\CHom(M,N)$ and its subspace $\CHom_A(M,N)$ are left $A[\partial]$-modules, with the action of $a(\partial)\in A[\partial]$ given by
\begin{equation}\label{eq:lcder-mod}
(a(\partial)\phi)_\lambda
=
\big(\big|_{x=\partial}
a^*(\lambda)
\big)
\phi_{\lambda+x}
\,.
\end{equation}
Recall also that a \emph{right conformal homomorphism} of $A[\partial]$-modules from $M$ to $N$
is a linear map $\psi_\lambda:\,M\to N[\lambda]$ 
satisfying $\psi_\lambda(a(\partial) u)=\big(\big|_{x=\partial}a^*(\lambda)\big)\psi_{\lambda+x}(u)$.
We denote by $\RCHom_A(M,N)$ the space of right conformal homomorphisms 
of $A[\partial]$-modules from $M$ to $N$, and by $\RCEnd_A(M)=\RCHom_A(M,M)$.
It is a left $A[\partial]$-module, with the action of $a(\partial)\in A[\partial]$ given by 
\begin{equation}\label{eq:rcder-mod}
(a(\partial)\psi)_\lambda
=
a(\lambda+\partial)\circ\psi_\lambda
\,.
\end{equation}
We also let $\RCHom(M,N)=\RCHom_{\mb F}(M,N)$.
We have an isomorphism of $A[\partial]$-modules $\CHom_A(M,N)\to\RCHom_A(M,N)$,
 $\phi\mapsto\phi^*$, defined by
\begin{equation}\label{eq:phi-dual}
\phi^*_\lambda(u)=\big(\big|_{x=\partial}\phi_{-\lambda-x}(u)\big)
\,.
\end{equation}

A \emph{(left) conformal derivation} from the differential algebra $A$ 
to a left $A[\partial]$-module $M$ is a linear map 
$\phi_\lambda:\,A\to M[\lambda]$ satisfying
($a(\partial)\in A[\partial],\,b\in A$)
\begin{equation}\label{eq:lcder}
\phi_\lambda(a(\partial)b)
=
\big(\big|_{x=\partial}b\big)\phi_\lambda(a(x)) 
+
a(\lambda+x)\big(\big|_{x=\partial}\phi_\lambda(b)\big)
\,.
\end{equation}
We denote by $\CDer(A,M)$ the space of conformal derivations from $A$ to $M$,
which has the structure of a left $A[\partial]$-module given by the same equation as \eqref{eq:lcder-mod}.
Clearly, $\CDer(A,M)$ is an $A[\partial]$-submodule of $\CHom_A(A,M)$.
We also denote $\CDer(A)=\CDer(A,A)$.
Moreover, a \emph{right conformal derivation} 
from the differential algebra $A$ to the left $A[\partial]$-module $M$
is a linear map 
$\psi_\lambda:\,A\to M[\lambda]$ satisfying
($a(\partial)\in A[\partial],\,b\in A$)
\begin{equation}\label{eq:rcder}
\psi_\lambda(a(\partial)b)
=
\big(\big|_{x=\partial}
a^*(\lambda)
\big)
\psi_{\lambda+x}(b)
+
\big(\big|_{x=\partial}b\big) \psi_{\lambda+x}(a(x))
\,.
\end{equation}
We denote by $\RCDer(A,M)$ the space of right conformal derivations from $A$ to $M$,
which has the structure of a left $A[\partial]$-module given by 
the same equation as \eqref{eq:rcder-mod}.
Clearly, $\RCDer(A,M)$ is an $A[\partial]$-submodule of $\RCHom_A(A,M)$.
We also denote $\RCDer(A)=\RCDer(A,A)$.
It is easy to see that if $\phi\in\CDer(A,M)$, then $\phi^*\in\RCDer(A,M)$,
and conversely if $\psi\in\RCDer(A,M)$, then $\psi^*\in\CDer(A,M)$.
Moreover, the $A[\partial]$-module structures \eqref{eq:lcder-mod} and \eqref{eq:rcder-mod}
of $\CDer(A,M)$ and $\RCDer(A,M)$ are compatible with respect to this bijection.
Hence, the map $\phi\mapsto\phi^*$ restricts to an isomorphism
of left $A[\partial]$-modules $\CHom_A(A,M)\to\RCHom_A(A,M)$.

Recall \cite{Kac} that a \emph{Lie conformal algebra} (LCA) $E$ is an $\mb F[\partial]$-module
endowed with a $\lambda$-bracket $[\cdot\,_\lambda\,\cdot]:\,E\otimes E\to E[\lambda]$
satisfying:
\begin{itemize}
\item[($LCA$-i)] sesquilinearity:
$[\partial u_\lambda v]=-\lambda[u_\lambda v]$,
$[u_\lambda \partial v]=(\partial+\lambda)[u_\lambda v]$;
\item[($LCA$-ii)] skewsymmetry:
$[v_\lambda u]=-\big(\big|_{x=\partial}[u_{-\lambda-x} v]\big)$;
\item[($LCA$-iii)] Jacobi identity:
$[u_\lambda[v_\mu w]]-[v_\mu[u_\lambda w]]=[[u_\lambda v]_{\lambda+\mu}w]$.
\end{itemize}

To construct an example, note that 
we have a (not necessarily polynomial valued)
$\lambda$-bracket on $\CEnd(M)$ defined by the formula:
\begin{equation}\label{eq:lcder-lambda}
[\phi_\lambda\psi]_\mu(v)
=
\phi_\lambda(\psi_{\mu-\lambda}(v))-\psi_{\mu-\lambda}(\phi_\lambda(v))
\,,
\end{equation}
satisfying all conditions (i)--(iii) of an LCA.
Moreover, if $\phi,\psi\in\CDer(A)$, 
then $[\phi_\lambda\psi]$ has all its coefficients in $\CDer(A)$.
Furthermore, if $M$ is a finitely generated $\mb F[\partial]$-module,
then $[\phi_\lambda\psi]$ is polynomial in $\lambda$,
thus making $\CEnd(M)$ an LCA.
Likewise, if $A$ is finitely generated as a differential algebra and $\phi,\psi\in\CDer(A)$,
then $[\phi_\lambda\psi]$ is polynomial in $\lambda$,
thus making $\CDer(A)$ an LCA.
In the following sections, with an abuse of terminology, by an LCA homomorphism $R\to\CDer(A)$ 
we mean an $\mb F[\partial]$-module homomorphism from an LCA $R$ to $\CDer(A)$ 
mapping the LCA $\lambda$-bracket of $R$
to the $\lambda$-bracket \eqref{eq:lcder-lambda}.
This is of course an LCA homomorphism under the assumption that $A$ is finitely generated.
Of course, we have a corresponding LCA $\lambda$-bracket on the space $\RCDer(A)$,
induced by the bijection \eqref{eq:phi-dual}:
\begin{equation}\label{eq:rcder-lambda}
[\phi_\lambda\psi]^r_\mu(v)
=
\phi^*_\lambda(\psi_{\mu}(v))-\psi_{\lambda+\mu}(\phi_\mu(v))
\,,
\end{equation}
so the map $\CDer(A)\to\RCDer(A),\,\phi\mapsto\phi^*$,
becomes an LCA isomorphism.

\subsection{Definition and examples of LCAd}\label{sec:2.1}

\begin{definition}\label{def:LCAd}
A \emph{Lie conformal algebroid} (LCAd) $E$ over a differential algebra $A$ 
is a left $A[\partial]$-module endowed with a left $A[\partial]$-module homomorphism
$\theta:\, E\to \CDer(A)$, called the \emph{anchor map},
and an LCA $\lambda$-bracket $[\cdot\,_\lambda\,\cdot]:\,E\otimes E\to E[\lambda]$,
satisfying
\begin{enumerate}[(i)]
\item 
$[u_\lambda a(\partial)v]
=
a(\lambda+x)\big(\big|_{x=\partial}[u_\lambda v]\big)
+
\theta(u)_\lambda(a(x))\big(\big|_{x=\partial}v\big)$;
\item
$\theta([u_\lambda v])_\mu(a)
=
\theta(u)_\lambda(\theta(v)_{\mu-\lambda}(a))
-\theta(v)_{\mu-\lambda}(\theta(u)_{\lambda}(a))$.
\end{enumerate}
\end{definition}
\noindent
Note that condition (i) can be equivalently written, by skewsymmetry, as
\begin{enumerate}[(i')]
\item 
$[a(\partial)u_\lambda v]
=
\big(\big|_{x=\partial}a^*(\lambda)\big)
[u_{\lambda+x} v]
-
\theta(v)^*_{\lambda+x}(a(x)) \big(\big|_{x=\partial}u\big)$.
\end{enumerate}
In fact, we can put together conditions (i) and (i') to get the following \emph{total formula}:
\begin{equation}\label{eq:tot-form}
\begin{split}
& [a(\partial)u_\lambda b(\partial)v]
=
\big(\big|_{x=\partial}a^*(\lambda)\big)
b(\lambda+x+y)
\big(\big|_{y=\partial}[u_{\lambda+x}v]\big)
 \\
& +
\big(\big|_{x=\partial}a^*(\lambda)\big)
\theta(u)_{\lambda+x}(b(y)) \big(\big|_{y=\partial}v\big) 
- 
b(\lambda+x+y) 
\big(\big|_{y=\partial}\theta(v)^*_{\lambda+x}(a(x))\big)
\big(\big|_{x=\partial}u\big)
\,.
\end{split}
\end{equation}
Note, moreover, that if $A$ is finitely generated as a differential algebra, 
then $\CDer(A)$ is an LCA and condition (ii) is just saying that 
$\theta$ is an LCA homomorphism.

By definition, a \emph{homomorphism} $\varphi:\,E\to F$ of LCAd over $A$ is a linear map which is both
a homomorphism of left $A[\partial]$-modules and a homomorphism of LCA,
and such that $\theta_F(\varphi(u))=\theta_E(u)\in\CDer(A)$ for every $u\in E$.

By definition, an \emph{ideal} of an LCAd $E$ over $A$ is a left $A[\partial]$-submodule $J\subset E$ which is
an ideal with respect to
the LCA $\lambda$-bracket of $E$,
and such that $\theta|_{J}=0$. In this case, obviously, the quotient space $E/J$ has an induced structure of LCAd and the canonical quotient map
$\pi:E\to E/J$ is a homomorphism of LCAd.

The following Lemma will be useful to construct examples of LCAd.
\begin{lemma}\label{lem:gener}
Let $A$ be a differential algebra and let $E$ be a left $A[\partial]$-module
endowed with a left $A[\partial]$-module homomorphism
$\theta:\, E\to \CDer(A)$
and a $\lambda$-bracket $[\cdot\,_\lambda\,\cdot]:\,E\otimes E\to E[\lambda]$,
satisfying conditions (i) and (i') of Definition \ref{def:LCAd}.
\begin{enumerate}[(a)]
\item
Sesquilinearity (LCA-i) holds for every $u,v\in E$.
\item
If skewsymmetry (LCA-ii) holds for given elements $u,v\in E$,
then it holds for $a(\partial)u,b(\partial)v$, for all $a(\partial),b(\partial)\in A[\partial]$.
\item
If condition (ii) of Definition \ref{def:LCAd} holds for given $u,v\in E$ and $c\in A$,
then it holds for $a(\partial)u,b(\partial)v$ and $c\in A$,
for all $a(\partial),b(\partial)\in A[\partial]$.
\item
Assume moreover that skewsymmetry (LCA-ii) and condition (ii) of Definition \ref{def:LCAd} hold.
If the Jacobi identity (LCA-iii) holds for given $u,v,w\in E$,
then it holds for $a(\partial)u,b(\partial)v,c(\partial)w$
for all $a(\partial),b(\partial),c(\partial)\in A[\partial]$.
\end{enumerate}
In particular, $E$ is an LCAd provided that conditions (i) and (i') of Definition \ref{def:LCAd} hold,
and all the axioms of LCAd hold for a set of generators of $E$ as a left $A[\partial]$-module.
\end{lemma}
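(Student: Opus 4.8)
The plan is to prove the four reduction statements (a)--(d) and then assemble them. Part (a) is the most direct: sesquilinearity (LCA-i) is simply the specialization of conditions (i) and (i') to the operator $a(\partial)=\partial$. Taking $a(\partial)=\partial$ in condition (i), the first term on the right becomes $(\partial+\lambda)[u_\lambda v]$ by the notation \eqref{eq:tot-form}'s conventions, while the anchor term $\theta(u)_\lambda(a(x))\big(\big|_{x=\partial}v\big)$ vanishes because here $a(x)=1\cdot x$ and $\theta(u)_\lambda(1)=0$ (conformal derivations annihilate the unit, by the Leibniz rule with both arguments equal to $1$). This yields $[u_\lambda\partial v]=(\partial+\lambda)[u_\lambda v]$. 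Symmetrically, specializing (i') to $a(\partial)=\partial$ and using $\theta(v)^*_\mu(1)=0$ kills the anchor term and gives $[\partial u_\lambda v]=-\lambda[u_\lambda v]$. Hence (a) holds for all $u,v\in E$.

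For parts (b) and (c) I would work directly from the total formula \eqref{eq:tot-form}, which packages (i) and (i') and expresses $[a(\partial)u_\lambda b(\partial)v]$ as one \emph{bracket term} built from $[u_{\lambda+x}v]$ plus two \emph{anchor terms} built from $\theta(u)$ and $\theta(v)$. For (b), I would expand both $[a(\partial)u_\lambda b(\partial)v]$ and $[b(\partial)v_\lambda a(\partial)u]$ by \eqref{eq:tot-form} and compare with the skewsymmetry relation: the bracket terms match using skewsymmetry of $[u_\cdot v]$ (the hypothesis for $u,v$) together with sesquilinearity from (a), while the two anchor terms are exchanged using the duality $\phi\mapsto\phi^*$ of \eqref{eq:phi-dual} relating $\CDer(A)$ and $\RCDer(A)$; this is a term-by-term verification requiring no new idea. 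For (c), the cleanest route is to observe that condition (ii) is exactly the assertion that $\theta$ intertwines the $\lambda$-bracket of $E$ with the bracket \eqref{eq:lcder-lambda} on $\CEnd(A)$. I would apply the $A[\partial]$-module homomorphism $\theta$ to the total formula \eqref{eq:tot-form}, use $\theta(a(\partial)u)=a(\partial)\theta(u)$ together with the module action \eqref{eq:lcder-mod} and the hypothesis $\theta([u_\lambda v])=[\theta(u)_\lambda\theta(v)]$; the resulting expression is precisely the total formula for the bracket \eqref{eq:lcder-lambda} evaluated on $\theta(a(\partial)u)$ and $\theta(b(\partial)v)$, which identifies it with $[\theta(a(\partial)u)_\lambda\theta(b(\partial)v)]$, as required.

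Part (d) is where the real work lies, and I expect it to be the main obstacle. Since the Jacobi identity (LCA-iii) is additive in each of its three arguments and skewsymmetry is already available, it suffices to show that Jacobi is preserved under scaling a single argument, say $w\mapsto c(\partial)w$; the other two slots are then handled by applying skewsymmetry (and part (b)) to rotate them into the third position. To extend the third argument I would substitute $c(\partial)w$ into all three brackets in Jacobi and repeatedly apply condition (i) to pull out the scaling $c(\partial)$. Besides the ``scaled Jacobi'' expression, which vanishes by the hypothesis for $u,v,w$, this produces a collection of anchor terms in which $\theta(u)$ and $\theta(v)$ act on the coefficients of $c$. The crucial point is that, after using sesquilinearity to align the spectral parameters, these leftover terms reorganize into the combination $\theta(u)_\lambda\theta(v)_\mu-\theta(v)_\mu\theta(u)_\lambda-\theta([u_\lambda v])_{\lambda+\mu}$ acting on $w$, which is annihilated by condition (ii). The difficulty is entirely in the careful bookkeeping of these anchor contributions and in checking that condition (ii) is exactly what forces their cancellation.

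Finally, to deduce the ``in particular'' statement I would combine the four parts. Sesquilinearity holds on all of $E$ by (a). Given a generating set $S$ of $E$ over $A[\partial]$ on which skewsymmetry, condition (ii), and Jacobi hold, parts (b) and (c) extend skewsymmetry and condition (ii) from pairs in $S$ to arbitrary $A[\partial]$-multiples of such pairs; since all axioms are multilinear, additivity then propagates them to every $A[\partial]$-linear combination, hence to all of $E\times E$. With skewsymmetry and condition (ii) now valid everywhere, part (d) upgrades Jacobi from triples in $S$ to all of $E\times E\times E$. Thus every LCAd axiom holds on $E$, completing the proof.
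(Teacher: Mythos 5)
Your proposal is correct and follows essentially the same route as the paper's proof: part (a) by specializing (i) and (i') to $a(\partial)=\partial$, parts (b) and (c) by expanding via the total formula \eqref{eq:tot-form} and (for (c)) applying the $A[\partial]$-module homomorphism $\theta$ and comparing with the bracket \eqref{eq:lcder-lambda}, and part (d) by computing the Jacobiator on $c(\partial)w$ so that the leftover anchor terms form exactly the combination killed by condition (ii), with skewsymmetry used to rotate the remaining two slots into the third position. You correctly identify the key cancellation mechanism in (d), which is precisely the content of the paper's equations \eqref{eq:J1}--\eqref{eq:J3}.
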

\begin{proof}
Claim (a) is obvious, since sesquilinearity (LCA-i) is a special case of conditions (i) and (i'),
when $a(\partial)=\partial$.
We already observed that conditions (i) and (i'), together with the assumption that the map $\theta$
commutes with the action of $A[\partial]$, are equivalent to the total formula \eqref{eq:tot-form}.
We then use that to prove claim (b). We have
\begin{align*}
& \big(\big|_{z=\partial}[b(\partial)v_{-\lambda-z}a(\partial)u]\big) \\
& =
\Big(\big|_{z=\partial}
\big(\big|_{x=\partial}b^*({-\lambda-z})\big)
a(-\lambda-z+x+y)
\big(\big|_{y=\partial}[v_{-\lambda-z+x}u]\big) \\
&\quad +
\big(\big|_{x=\partial}b^*(-\lambda-z)\big)
\theta(v)_{-\lambda-z+x}(a(y)) \big(\big|_{y=\partial}u\big) \\
&\quad -
a(-\lambda-z+x+y) 
\big(\big|_{y=\partial}\theta(u)^*_{-\lambda-z+x}(b(x))\big)
\big(\big|_{x=\partial}v\big)
\Big) \\
& =
-b(\lambda+y+z)
\big(\big|_{z=\partial}a^*(\lambda)\big)
\big(\big|_{y=\partial}[u_{\lambda+z}v]\big) \\
&\quad + b(\lambda+y+z)
\big(\big|_{z=\partial} \theta(v)^*_{\lambda+y}(a(y)) \big)
\big(\big|_{y=\partial}u\big) \\
&\quad -
\big(\big|_{z=\partial} a^*(\lambda) \big)
\theta(u)_{\lambda+z}(b(x))
\big(\big|_{x=\partial}v\big)
\Big)
=
-[a(\partial)u_\lambda b(\partial)v]
\,.
\end{align*}

For part (c), we have
\begin{align*}
& \theta\big([a(\partial)u_\lambda b(\partial)v]\big)_\mu(c) \\
& =
\theta\Big(
\big(\big|_{x=\partial}a^*(\lambda)\big)
b(\lambda+x+y)
\big(\big|_{y=\partial}[u_{\lambda+x}v]\big)
 \\
&\quad +
\big(\big|_{x=\partial}a^*(\lambda)\big)
\theta(u)_{\lambda+x}(b(y)) \big(\big|_{y=\partial}v\big) \\
&\quad - 
b(\lambda+x+y) 
\big(\big|_{y=\partial}\theta(v)^*_{\lambda+x}(a(x))\big)
\big(\big|_{x=\partial}u\big)
\Big)_\mu(c) \\
& =
\big(\big|_{x=\partial}a^*(\lambda)\big)
\big(\big|_{y=\partial}b^*(\mu-\lambda)\big)
\theta\big([u_{\lambda+x}v]\big)_{\mu+x+y}(c) \\
&\quad +
\big(\big|_{x=\partial}a^*(\lambda)\big)
\big(\big|_{y=\partial}\theta(u)_{\lambda+x}(b^*(\mu-\lambda))\big)
\theta(v)_{\mu+x+y}(c)
 \\
&\quad - 
\big(\big|_{x=\partial}b^*(\mu-\lambda)\big)
\big(\big|_{y=\partial}\theta(v)_{\mu+x-\lambda}(a^*(\lambda))\big)
\theta(u)_{\mu+x+y}(c)\,,
\end{align*}
where, for the first equality we used equation \eqref{eq:tot-form},
for the second equality we used the assumption that $\theta$ is a morphism of left $A[\partial]$-modules
and the definition \eqref{eq:lcder-mod} of the action of $A[\partial]$ on the space of conformal derivations.
On the other hand, we also have
\begin{align*}
& \big([\theta(a(\partial)u) _\lambda \theta(b(\partial)v)]\big)_\mu(c) \\
& =
\theta(a(\partial)u)_\lambda\big( \theta(b(\partial)v)_{\mu-\lambda}(c)\big)
-
\theta(b(\partial)v)_{\mu-\lambda}\big(\theta(a(\partial)u)_\lambda(c)\big) \\
& =
\big(\big|_{x=\partial}a^*(\lambda)\big)
\theta(u)_{\lambda+x}
\big( 
\big(\big|_{y=\partial}b^*(\mu-\lambda)\big)
\theta(v)_{\mu-\lambda+y}(c)
\big) \\
&\quad -
\big(\big|_{y=\partial}b^*(\mu-\lambda)\big)
\theta(v)_{\mu-\lambda+y}
\big(
\big(\big|_{x=\partial}a^*(\lambda)\big)
\theta(u)_{\lambda+x}(c)
\big) \\
& =
\big(\big|_{x=\partial}a^*(\lambda)\big)
\big(\big|_{y=\partial}b^*(\mu-\lambda)\big)
\theta(u)_{\lambda+x}
\big( 
\theta(v)_{\mu-\lambda+y}(c)
\big) \\
&\quad +
\big(\big|_{x=\partial}a^*(\lambda)\big)
\theta(u)_{\lambda+x} 
\big(\big|_{y=\partial}b^*(\mu-\lambda)\big)
\theta(v)_{\mu-\lambda+y}(c)
\\
&\quad -
\big(\big|_{y=\partial}b^*(\mu-\lambda)\big)
\big(\big|_{x=\partial}a^*(\lambda)\big)
\theta(v)_{\mu-\lambda+y}
\big(
\theta(u)_{\lambda+x}(c)
\big) \\
&\quad -
\big(\big|_{y=\partial}b^*(\mu-\lambda)\big)
\theta(v)_{\mu-\lambda+y}
\big(
\big(\big|_{x=\partial}a^*(\lambda)\big)
\big)
\theta(u)_{\lambda+x}(c)
\,,
\end{align*}
where for the first equality we used \eqref{eq:lcder-lambda},
for the second equality we used the assumption that $\theta$ is a morphism of left $A[\partial]$-modules,
and for the last equality we used the definition of left conformal derivation.
Comparing the right-hand sides of the above two equations,
and using \eqref{eq:lcder-lambda}, we get that 
$\theta[a(\partial)u_\lambda b(\partial)v]
=[\theta(a(\partial)u) _\lambda \theta(b(\partial)v)]$,
as claimed in part (c).

Finally, we prove claim (d).
We introduce the notation
\begin{equation}\label{eq:Jac}
J_{\lambda,\mu}(u,v,w)
=
[u_\lambda[v_\mu w]]
-[v_\mu[u_\lambda w]]
-[[u_\lambda v]_{\lambda+\mu}w]
\,.
\end{equation}
If condition (i) of Definition \ref{def:LCAd} holds, then a direct computation leads to
\begin{equation}\label{eq:J1}
\begin{split}
& J_{\lambda,\mu}(u,v,c(\partial)w)
=
c(\lambda+\mu+x)\big(\big|_{x=\partial}J_{\lambda,\mu}(u,v,w)\big) \\
& +\Big(
\theta(u)_\lambda\theta(v)_\mu(c(x))
-\theta(v)_\mu\theta(u)_\lambda(c(x))
-\theta([u_\lambda v])_{\lambda+\mu}(c(x))
\Big)\big(\big|_{x=\partial}w\big)
\,,
\end{split}
\end{equation}
for every $c(\partial)\in A[\partial]$.
In particular, if condition (ii) of Definition \ref{def:LCAd} holds, 
then
\begin{equation}\label{eq:J2}
J_{\lambda,\mu}(u,v,c(\partial)w)
=
c(\lambda+\mu+x)\big(\big|_{x=\partial}J_{\lambda,\mu}(u,v,w)\big)
\,.
\end{equation}
Moreover, it is easy to check that, if the skewsymmetry condition (LCA-ii) holds, 
\begin{equation}\label{eq:J3}
J_{\lambda,\mu}(u,v,w)
=
\big(\big|_{x=\partial}
J_{\mu,-\lambda-\mu-x}(v,w,u)
\big)
\,.
\end{equation}
Combining equations \eqref{eq:J2} and \eqref{eq:J3}
we immediately get that, if condition (ii) of Definition \ref{def:LCAd} holds, then
\begin{align*}
& J_{\lambda,\mu}(a(\partial)u,b(\partial)v,c(\partial)w) \\
& =
\big(\big|_{x=\partial}a^*(\lambda)\big)
\big(\big|_{y=\partial}b^*(\mu)\big)
c(\lambda+\mu+x+y+z)
\big(\big|_{z=\partial}J_{\lambda+x,\mu+y}(u,v,w)\big)
\,.
\end{align*}
Claim (d) follows from the above identity.
\end{proof}

\begin{example}\label{ex:trivial}
If $A=\mb F$ with $\partial=0$,
then $\CDer(A)=0$, so the anchor map must be trivial.
Hence, an LCAd over $\mb F$ is the same as an LCA.
\end{example}

\begin{example}\label{ex:CDer-LCAd}
Let $A$ be a finitely generated differential algebra.
Then the space $\CDer(A)$ of conformal derivations over $A$,
together with the left $A[\partial]$-module structure \eqref{eq:lcder-mod},
the LCA $\lambda$-bracket \eqref{eq:lcder-lambda}
and the anchor map given by the identity map on $\CDer(A)$ is an LCAd.
In \cite{BD} this is called the \emph{tangent} Lie$^*$ algebroid over $A$.
To check the LCAd axioms, condition (i) holds by a direct computation, while condition (ii) coincides with \eqref{eq:lcder-lambda}.
Likewise, the space $\RCDer(A)$ of right conformal derivations over $A$,
together with the left $A[\partial]$-module structure \eqref{eq:rcder-mod},
the LCA $\lambda$-bracket \eqref{eq:rcder-lambda}
and the anchor map $\RCDer(A)\to\CDer(A)$ mapping $\psi\mapsto\psi^*$, is an LCAd.
Moreover, the map $\CDer(A)\to\RCDer(A)$ mapping $\phi\mapsto\phi^*$, 
is an LCAd isomorphism.
\end{example}

\begin{example}\label{exa:Btilde}
Let $\mc V$ be a Poisson vertex algebra (see the definition in Section \ref{sec:5}).
In particular, $\mc V$ is a differential algebra and we may consider the LCAd $\CDer(\mc V)$ from Example \ref{ex:CDer-LCAd}.
Consider the subspace
$$
B(\mc V)=\Big\{\gamma\in \CDer(\mc V)\,\Big|\,
\gamma_{\lambda}(\{a_\mu b\})
=\{\gamma_\lambda(a)_{\lambda+\mu} b\}
+\{a_{\mu}\gamma_{\lambda}(b)\}\,,
\text{ for all }a,b\in\mc V\Big\}
\,.
$$
By a direct computation one can check that $B(\mc V)$ is an LCA subalgebra with respect to the $\lambda$-bracket \eqref{eq:lcder-lambda} of $\CDer(\mc V)$.
However, it is not a left $\mc V[\partial]$-submodule. Indeed, using  \eqref{eq:lcder-mod} and the axioms of a PVA we get
($f(\partial)\in\mc V[\partial]$, $\gamma\in B(\mc V)$, $a,b\in\mc V$)
\begin{equation}\label{20251013:eq1}
\begin{split}
&(f(\partial)\gamma)_{\lambda}(\{a_\mu b\})
-\big\{(f(\partial)\gamma)_\lambda(a)_{\lambda+\mu} b\big\}
-\big\{a_{\mu}(f(\partial)\gamma)_{\lambda}(b)\big\}
\\
&=\{f^*(\lambda)_{\lambda+\mu+y}b\}\big(\big|_{y=\partial}\gamma^*_\mu(a)\big)
+\big(\big|_{y=\partial}\{a_\mu f^*(\lambda)\}\big)\gamma_{\lambda+\mu+y}(b)
\,.
\end{split}
\end{equation}
On the other hand, consider
$$
\Cas(\mc V)=\{a\in\mc V\mid \{a_\lambda\, \cdot\}=0\}
\,,
$$
which is obviously a differential subalgebra of $\mc V$.
Equation \eqref{20251013:eq1} shows that $B(\mc V)$ is a left $\Cas(\mc V)[\partial]$-submodule of $\CDer(\mc V)$.\
Moreover, for every $\gamma\in B(\mc V)$, $a\in\Cas(\mc V)$ and $b\in\mc V$,
we have $\{\gamma_{\lambda}(a)_\mu b\}=0$, which means that
$\gamma_\lambda(a)\in\Cas(\mc V)[\lambda]$.
Hence, the restriction to $\Cas(\mc V)$ defines a map $\theta:B(\mc V)\to\CDer(\Cas(\mc V))$. It is then
immediate to check that this map satisfies the axioms (i) and (ii) of Definition \ref{def:LCAd}, hence
$B(\mc V)$ is an LCAd over $\Cas(\mc V)$.

For $f\in \mc V$, denote $X_f=\{f_{\lambda}\,\cdot\}$. By the Leibniz rule of the $\lambda$-bracket (see equation \eqref{eq:leib} in Section \ref{sec:5}) 
we have $X_f\in\CDer(\mc V)$, while by the Jacobi identity (LCA-iii) of the $\lambda$-bracket,
we actually have $X_f\in B(\mc V)$.
Consider then the subspace
$$
Z(\mc V)=\{X_f\mid f\in\mc V\}\subset B(\mc V)\,.
$$
By the right Leibniz rule and sesquilinearity of a PVA (see equation \eqref{eq:leibr} in Section \ref{sec:5}),
we have $a(\partial)X_f=X_{a(\partial)f}$, for $a(\partial)\in \Cas(\mc V)[\partial]$, $f\in\mc V$, so that
$Z(\mc V)$ is a left $\Cas(\mc V)[\partial]$-submodule of $B(\mc V)$.
Moreover,
by \eqref{eq:lcder-lambda}, we have $[{X_f}_{\lambda}X_g]=X_{\{f_\lambda g\}}\in Z(\mc V)[\lambda]$, and
obviously $\theta(X_f)=\{f_\lambda\,\cdot\}|_{\Cas(\mc V)}=0$.
Hence, $Z(\mc V)\subset B(\mc V)$ is an LCAd ideal. As a consequence,
$H(\mc V)=B(\mc V)/Z(\mc V)$
is an LCAd over $\Cas(\mc V)$.

In fact, this LCAd coincides, after applying the isomorphism $*:\CDer(\mc V)\to\RCDer(\mc V)$
given by \eqref{eq:phi-dual}, with the first cohomology space $\widetilde{H}^1(\mc V,\mc V)$ of the basic PVA
cohomology complex of $\mc V$ with coefficients in the adjoint module $\mc V$, defined in Section \ref{sec:6.2}. 
\end{example}

\begin{remark}\label{rem:exaB}
There is an LAd analogue of Example \ref{exa:Btilde}, by taking the first cohomology space $H^1(\mc V,\mc V)$
of the variational PVA cohomology (as opposed to the basic PVA cohomology, considered in Example \ref{exa:Btilde}).
It is obtained as follows.
We consider the LAd $\Der(\mc V)$ of all derivations of (the product of) $\mc V$, over the algebra $\mc V$,
and the Lie subalgebra $\Der^\partial(\mc V)$ of all derivations commuting with $\partial$.
We then take the subspace
$$
\overline{B}(\mc V)
=
\big\{D\in\Der^\partial(\mc V)\,\big|\,
D(\{a_\lambda b\})=\{D(a)_\lambda b\}+\{a_\lambda D(b)\}\,,\,\text{ for all } a,b\in\mc V\big\}
\,.
$$
This is a Lie subalgebra of $\Der^\partial(\mc V)$,
it is an $R$-submodule with respect to the multiplication by elements of the algebra $R=\{f\in\mc V\,|\,\partial f=0\}$,
and $[D_1,aD_2]=a[D_1,D_2]+D_1(a)D_2$, for $D_1,D_2\in\overline{B}(\mc V)$ and $a\in R$.
Hence, $\overline{B}(\mc V)$ is a LAd over $R$ with anchor map $\theta:\,\overline{B}(\mc V)\to\Der(R)$
given by $\theta(D)=D|_R$ (recall the definition of a LAd at the beginning of Section \ref{sec:2.2a}).

For $f\in\mc V$, denote $\overline{X}_f=\{f_\lambda\,\cdot\}\big|_{\lambda=0}$, which, by the PVA axioms,
lies in $\overline{B}(\mc V)$.
Consider then the subspace
$$
\overline{Z}(\mc V)
=
\big\{\overline{X}_f\,\big|\,f\in\mc V\big\}\subset\overline{B}(\mc V)
\,,
$$
which is a LAd ideal of $\overline{B}(\mc V)$.
As a consequence, $\overline{H}(\mc V)=\overline{B}(\mc V)/\overline{Z}(\mc V)$ is a LAd over $R$.
\end{remark}

\begin{example}\label{ex:transf-LCAd}
Let $A$ be a differential algebra,
let $(L,\partial,[\cdot\,_\lambda\,\cdot])$ be an LCA,
and let $\phi:\,L\to\CDer(A)$ be an LCA homomorphism.
The associated \emph{transformation LCAd} is defined as the space $E=A\otimes L$,
with the left $A[\partial]$-module structure given by 
$$
a(\partial)(b\otimes u)=\big(a(x+y)\big(\big|_{x=\partial}b\big)\big)\otimes \big(\big|_{y=\partial}u\big)
\,,
$$
the LCA $\lambda$-bracket given by
$$
[a\otimes u_\lambda b\otimes v]_E
=
\big(\big|_{x=\partial}a\big)b\otimes[u_{\lambda+x} v]
+
\big(\big|_{x=\partial}a\big)\phi(u)_{\lambda+x}(b)\otimes v
-
b\phi(v)^*_{\lambda+x}(a)\otimes \big(\big|_{x=\partial}u\big)
\,,
$$
i.e. it is defined as $[\cdot\,_\lambda\,\cdot]$ on $1\otimes L\simeq L$
and extended to $A\otimes L$ via the total formula \eqref{eq:tot-form},
and the anchor map $\theta:\,A\otimes L\to\CDer(A)$ is given by
$$
\theta(a\otimes u)_\lambda(b)
=
\big(\big|_{x=\partial}a\big)\phi(u)_{\lambda+x}(b)
\,.
$$
It is straightforward to check that, indeed, these maps are well defined and the LCAd axioms hold \cite{DK09}.
\end{example}

\subsection{Example: gauge LCAd}\label{sec:2.15}

Let $A$ be a finitely generated differential algebra and let $M$ be a finitely generated left $A[\partial]$-module.
The associated \emph{gauge LCAd} $\mc G(A,M)$ is defined as follows.
As a vector space $\mc G(A,M)\subset\CEnd(M)\oplus\CDer(A)$ consists of all pairs $(\phi,\sigma)$
satisfying (cf. \eqref{eq:lcder})
\begin{equation}\label{eq:derM}
\phi_\lambda(a(\partial)u)
=
\sigma_\lambda(a(x)) \big(\big|_{x=\partial}u\big)
+
a(\lambda+x)\big(\big|_{x=\partial}\phi_\lambda(u)\big)
\,.
\end{equation}
The left $A[\partial]$-module structure of $\mc G(A,M)$ is defined as
$a(\partial)(\phi,\sigma)=(a(\partial)\phi,a(\partial)\sigma)$, where (cf. \eqref{eq:lcder-mod})
\begin{equation}\label{eq:gauge-Ad}
(a(\partial)\phi)_\lambda(u)
=
\big(\big|_{x=\partial}
a^*(\lambda)
\big)
\phi_{\lambda+x}(u)
\,\,,\quad
(a(\partial)\sigma)_\lambda(b)
=
\big(\big|_{x=\partial}
a^*(\lambda)
\big)
\sigma_{\lambda+x}(b)
\,.
\end{equation}
The LCA $\lambda$-bracket on  $\mc G(A,M)$ is defined as
\begin{equation}\label{eq:gauge-lambda}
[(\phi,\sigma)_\lambda(\psi,\tau)]
=
\sum_{i=0}^\infty \big(\phi_{(i)}\psi,\sigma_{(i)}\tau\big)
\frac{\lambda^i}{i!}\,,
\end{equation}
where (cf. \eqref{eq:lcder-lambda})
\begin{align}
& \sum_{i=0}^\infty (\phi_{(i)}\psi)_\mu(v)\frac{\lambda^i}{i!}
=
[\phi_\lambda\psi]_\mu(v)
=
\phi_\lambda(\psi_{\mu-\lambda}(v))-\psi_{\mu-\lambda}(\phi_\lambda(v))
\,,\label{eq:gauge-lambda1}\\
& \sum_{i=0}^\infty (\sigma_{(i)}\tau)_\mu(a)\frac{\lambda^i}{i!}
=
[\sigma_\lambda\tau]_\mu(a)
=
\sigma_\lambda(\tau_{\mu-\lambda}(a))-\tau_{\mu-\lambda}(\sigma_\lambda(a))
\,.\label{eq:gauge-lambda2}
\end{align}
Finally, the anchor map $\theta:\,\mc G(A,M)\to\CDer(A)$ is the projection on the second factor:
\begin{equation}\label{eq:gauge-anchor}
\theta(\phi,\sigma)=\sigma
\,.
\end{equation}

\begin{proposition}\label{prop:gauge}
\begin{enumerate}[(a)]
\item
The space $\mc G(A,M)\subset\CEnd(M)\oplus\CDer(A)$ defined by \eqref{eq:derM}
with the $A[\partial]$-action \eqref{eq:gauge-Ad}, the $\lambda$-bracket \eqref{eq:gauge-lambda}
and the anchor map \eqref{eq:gauge-anchor} is a Lie conformal algebroid.
\item
The anchor map $\theta:\,\mc G(A,M)\to\CDer(A)$ is a homomorphism of LCAd.
\item
The diagonal map $\Delta:\,\CDer(A)\to\mc G(A,A)$, mapping $\sigma\mapsto\Delta(\sigma)=(\sigma,\sigma)$,
is a homomorphism of LCAd.
Moreover, for $M=A$, we have $\theta\circ\Delta=\id_{\CDer(A)}$.
\end{enumerate}
\end{proposition}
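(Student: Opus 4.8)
The plan is to establish part (a) first and then deduce parts (b) and (c), which are largely formal. For part (a), the key structural observation is that $\mc G(A,M)$ is by construction a subspace of the direct sum $\CEnd(M)\oplus\CDer(A)$, equipped with the componentwise $\lambda$-bracket built from \eqref{eq:lcder-lambda} on each factor. Since each factor bracket already satisfies the LCA axioms, skewsymmetry (LCA-ii) and the Jacobi identity (LCA-iii) will be inherited automatically once we know that $\mc G(A,M)$ is a subalgebra, i.e. is closed under the bracket; sesquilinearity is likewise immediate. Thus the real content of part (a) reduces to three checks: stability of $\mc G(A,M)$ under the $A[\partial]$-action \eqref{eq:gauge-Ad}, stability under the $\lambda$-bracket \eqref{eq:gauge-lambda}, and the two compatibility conditions (i) and (ii) of Definition \ref{def:LCAd}. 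Polynomiality of the bracket follows from the finite generation hypotheses on $A$ and $M$, exactly as recorded before Definition \ref{def:LCAd}.

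First I would verify stability under the action: substituting the definitions \eqref{eq:gauge-Ad} of $a(\partial)\phi$ and $a(\partial)\sigma$ into the defining relation \eqref{eq:derM} and invoking that $(\phi,\sigma)$ already satisfies \eqref{eq:derM} yields the claim after a direct manipulation of the symbol calculus \eqref{eq:notation}. Condition (ii) of Definition \ref{def:LCAd} is then immediate: since the anchor \eqref{eq:gauge-anchor} is the projection onto the second factor and the bracket is componentwise, we have $\theta([(\phi,\sigma)_\lambda(\psi,\tau)])=[\sigma_\lambda\tau]$, and \eqref{eq:gauge-lambda2} is precisely the right-hand side demanded by (ii). That $\theta$ is a homomorphism of $A[\partial]$-modules is clear from \eqref{eq:gauge-Ad}.

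The main obstacle is the closure of $\mc G(A,M)$ under the bracket, together with condition (i) of Definition \ref{def:LCAd}; both are conformal incarnations of the principle that the commutator of two derivations is again a derivation. For closure I would fix $(\phi,\sigma),(\psi,\tau)\in\mc G(A,M)$ and show that the pair $([\phi_\lambda\psi]_\mu,[\sigma_\lambda\tau]_\mu)$ again satisfies \eqref{eq:derM}. Expanding $[\phi_\lambda\psi]_\mu(a(\partial)u)$ via \eqref{eq:gauge-lambda1}, then applying \eqref{eq:derM} for $(\psi,\tau)$ and for $(\phi,\sigma)$ in turn while carefully tracking the several spectral parameters, the terms in which both $\sigma$ and $\tau$ act on $a$ recombine, through \eqref{eq:gauge-lambda2}, into $[\sigma_\lambda\tau]_\mu(a(x))\big(\big|_{x=\partial}u\big)$, while the remaining terms assemble into $a(\mu+x)\big(\big|_{x=\partial}[\phi_\lambda\psi]_\mu(u)\big)$; this is exactly relation \eqref{eq:derM} for the bracket. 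Condition (i) is proved by an entirely analogous expansion of $[\phi_\lambda(a(\partial)\psi)]$ using \eqref{eq:derM} and \eqref{eq:gauge-Ad}, the $\CDer(A)$-component being already settled in Example \ref{ex:CDer-LCAd}. The delicate point throughout is the bookkeeping of the parameters $\lambda$, $\mu$ and the dummy variables introduced by the substitutions $\big|_{x=\partial}$; once this is organized, each identity falls out by matching coefficients.

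Parts (b) and (c) are then immediate. For (b), the map $\theta$ is an $A[\partial]$-module homomorphism by \eqref{eq:gauge-Ad}, it carries the bracket \eqref{eq:gauge-lambda} to the $\CDer(A)$-bracket \eqref{eq:lcder-lambda} because the bracket is componentwise, and it is compatible with the anchors since the anchor of $\CDer(A)$ is the identity (Example \ref{ex:CDer-LCAd}). For (c), the crucial remark is that a diagonal pair $(\sigma,\sigma)$ with $M=A$ satisfies \eqref{eq:derM} precisely because, when $\phi=\sigma$, that relation reduces to the defining property \eqref{eq:lcder} of a conformal derivation $\sigma\in\CDer(A)$; hence $\Delta(\sigma)\in\mc G(A,A)$. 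That $\Delta$ respects the $A[\partial]$-action and the bracket is then clear from the componentwise definitions, compatibility of anchors holds since the $\CDer(A)$-anchor is the identity, and finally $\theta\circ\Delta=\id_{\CDer(A)}$ because $\theta(\sigma,\sigma)=\sigma$ by \eqref{eq:gauge-anchor}.
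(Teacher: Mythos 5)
Your proposal is correct and follows essentially the same route as the paper: the LCA axioms and condition (ii) of Definition \ref{def:LCAd} are dispatched by the componentwise structure and the fact that the anchor is the second projection, and the substantive work is exactly the two computations you isolate — closure under the bracket (the paper's identity \eqref{eq:gauge-pr1}) and condition (i) (the paper's \eqref{eq:gauge-pr2}, with the $\CDer(A)$-component deferred to Example \ref{ex:CDer-LCAd}) — with the cancellations occurring just as you describe. Parts (b) and (c), including the observation that $(\sigma,\sigma)$ satisfies \eqref{eq:derM} because it reduces to \eqref{eq:lcder}, match the paper's treatment.
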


Before proving that indeed $\mc G(A,M)$ is a well defined LCAd,
we describe it explicitly in a special case.
If $A=\mb F[u_i^{(n)}\,;i=1,\dots,r,\,n\geq0]$
is the algebra of differential polynomials in $r$ variables,
then we have a bijection $\CDer(A)\simeq A[\lambda]^{\oplus r}$
mapping 
$\sigma\in\CDer(A)$ to $(\sigma_\lambda(u_i))_{i=1}^r\in A[\lambda]^{\oplus r}$,
and conversely mapping
$F=(f_i(\lambda))_{i=1}^r\in A[\lambda]^{\oplus r}$
to $\sigma_F\in\CDer(A)$ given by
$$
\sigma_F(p)
=
\sum_{i=1}^r\sum_{n=0}^\infty
\frac{\partial p}{\partial u_i^{(n)}} (\lambda+\partial)^nf_i(\lambda)
\,.
$$
On the other hand, if $M=\bigoplus_{\ell=1}^kA[\partial]w_\ell$ is a free $A[\partial]$-module,
then, for fixed $\sigma\in\CDer(A)$, the space $\mc G_\sigma(M)$
of maps $\phi:\,M\to M[\lambda]$ satisfying \eqref{eq:derM} is in bijection with $M[\lambda]^{\oplus k}$,
the bijection mapping $\phi\in\mc G_\sigma(M)$
to $\big(\phi_\lambda(w_\ell)\big)_{\ell=1}^k\in M[\lambda]^{\oplus k}$,
and conversely mapping
$P=(p_\ell(\lambda))_{\ell=1}^k\in M[\lambda]^{\oplus k}$
to $\phi_P\in\mc G_\sigma(M)$ given by
$$
\phi_P(\sum_{\ell=1}^ka_\ell(\partial)w_\ell)
=
\sum_{\ell=1}^k
\sigma_\lambda(a_\ell(x)) \big(\big|_{x=\partial}w_\ell\big)
+
\sum_{\ell=1}^k
a_\ell(\lambda+x)\big(\big|_{x=\partial}p_\ell(\lambda)\big)
\,.
$$
Hence, if $A$ is an algebra of differential polynomials in $r$ variables
and $M$ is a free left $A[\partial]$-module of rank $k$,
then we have a bijection 
$$
\mc G(A,M)\simeq \Mat_{k\times k}(A[\partial,\lambda])\oplus A[\lambda]^{\oplus r}
\,.
$$

\begin{proof}[{Proof of Prop.\ref{prop:gauge}}]
First, to check that \eqref{eq:gauge-Ad} defines a left action of the associative algebra $A[\partial]$
is the same, straightforward, computation needed to show that \eqref{eq:lcder-mod} 
defines a left action of $A[\partial]$ on $\CDer(A)$.
Moreover, to get a left $A[\partial]$-module structure on $\mc G(A,M)$, 
we also need to check that the pair $(a(\partial)\phi,a(\partial)\sigma)$ 
defined in \eqref{eq:gauge-Ad} lies in $\mc G(A,M)$ whenever $(\phi,\sigma)$ does,
i.e. $a(\partial)\sigma$ is a conformal derivation,
and $a(\partial)\phi$ satisfies condition \eqref{eq:derM}.
This is also a straightforward computation which we leave to the reader.

Next, we need to check that formula \eqref{eq:gauge-lambda}
defines an LCA $\lambda$-bracket on $\mc G(A,M)$.
Before checking the LCA axioms, we need to show that the coefficient of each power of $\lambda$
in \eqref{eq:gauge-lambda} indeed lies in $\mc G(A,M)$.
In other words, 
$\sigma_{(i)}\tau$ should be in $\CDer(A)$, which we know since \eqref{eq:lcder-lambda}
is a $\lambda$-bracket on $\CDer(A)$,
and the pair $(\phi_{(i)}\psi,\sigma_{(i)}\tau)$ should satisfy \eqref{eq:derM}, meaning that
$$
(\phi_{(i)}\psi)_\mu(a(\partial)u)
=
(\sigma_{(i)}\tau)_\mu(a(x)) \big(\big|_{x=\partial}u\big)
+
a(\mu+x)\big(\big|_{x=\partial}(\phi_{(i)}\psi)_\mu(u)\big)
\,.
$$
Multiplying by $\lambda^i/i!$ and summing over $i$,
the above equation becomes
\begin{equation}\label{eq:gauge-pr1}
[\phi_\lambda\psi]_\mu(a(\partial)u)
=
[\sigma_\lambda\tau]_\mu(a(x)) \big(\big|_{x=\partial}u\big)
+
a(\mu+x)\big(\big|_{x=\partial}[\phi_\lambda\psi]_\mu(u)\big)
\,.
\end{equation}
Indeed, the left-hand side of \eqref{eq:gauge-pr1} is, by \eqref{eq:gauge-lambda1},
\begin{align*}
& [\phi_\lambda\psi]_\mu(a(\partial)u)
=
\phi_\lambda\big(\psi_{\mu-\lambda}(a(\partial)u)\big)
-
\psi_{\mu-\lambda}\big(\phi_\lambda(a(\partial)u)\big) \\
& =
\phi_\lambda\Big(
\tau_{\mu-\lambda}(a(x))\big(\big|_{x=\partial}u\big)
+
a(\mu-\lambda+x)\big(\big|_{x=\partial}\psi_{\mu-\lambda}(u)\big)
\Big) \\
& -
\psi_{\mu-\lambda}\Big(
\sigma_\lambda(a(x)) \big(\big|_{x=\partial}u\big)
+
a(\lambda+x)\big(\big|_{x=\partial}\phi_\lambda(u)\big)
\Big) \\
& =
\sigma_\lambda\big(\tau_{\mu-\lambda}(a(x))\big)\big(\big|_{x=\partial}u\big)
+
\tau_{\mu-\lambda}(a(x))
\phi_\lambda\big(\big|_{x=\partial}u\big) \\
& +
\sigma_\lambda\big(a(\mu-\lambda+x)\big)
\big(\big|_{x=\partial}\psi_{\mu-\lambda}(u)\big)
+
a(\mu-\lambda+x)
\phi_\lambda\big(\big|_{x=\partial}\psi_{\mu-\lambda}(u)\big) \\
& -
\tau_{\mu-\lambda}\big(\sigma_\lambda(a(x))\big)
\big(\big|_{x=\partial}u\big)
-
\sigma_\lambda(a(x))
\psi_{\mu-\lambda}\big(\big|_{x=\partial}u\big) \\
& -
\tau_{\mu-\lambda}\big(a(\lambda+x)\big)
\big(\big|_{x=\partial}\phi_\lambda(u)\big)
-
a(\lambda+x)
\psi_{\mu-\lambda}\big(\big|_{x=\partial}\phi_\lambda(u)\big)
\,.
\end{align*}
The second and seventh terms of the RHS above cancel out since $\phi$ is a conformal endomorphism,
as do the third and sixth terms since $\phi_{\mu-\lambda}$ is a conformal endomorphism.
We then combine the remaining four terms using \eqref{eq:gauge-lambda1} and \eqref{eq:gauge-lambda2}
to get the right-hand side of \eqref{eq:gauge-pr1}.
Moreover, the proof that the $\lambda$-bracket \eqref{eq:gauge-lambda} satisfies the LCA axioms
is the same, straightforward, computation needed to prove that the $\lambda$-bracket \eqref{eq:lcder-lambda}
defines a structure of an LCA on $\CDer(A)$.

Furthermore, comparing the $A[\partial]$-action \eqref{eq:gauge-Ad} with \eqref{eq:lcder}
and the $\lambda$-bracket \eqref{eq:gauge-lambda} with \eqref{eq:lcder-lambda},
we have that the anchor map \eqref{eq:gauge-anchor} commutes with the left action of $A[\partial]$
and with taking $\lambda$-brackets.
Hence, we only need to check that condition (i) of Definition \ref{def:LCAd} holds.
This reads
$$
[(\phi,\sigma)_\lambda a(\partial)(\psi,\tau)]
=
a(\lambda+x)\big(\big|_{x=\partial}[(\phi,\sigma)_\lambda (\psi,\tau)]\big)
+
\sigma_{\lambda}(a(x))\big(\big|_{x=\partial}(\psi,\tau)\big)
\,.
$$
Since both the $A[\partial]$-action and the $\lambda$-bracket on $\mc G(A,M)$
are defined component-wise, the above equation reduces to the following two identities
\begin{align}
& [\phi_\lambda a(\partial)\psi]_\mu(u)
=
\Big(a(\lambda+x)\big(\big|_{x=\partial}[\phi_\lambda \psi]\big)\Big)_\mu(u)
+
\Big(\sigma_{\lambda}(a(x))\big(\big|_{x=\partial}\psi\big)\Big)_\mu(u)
\,,\label{eq:gauge-pr2}\\
& [\sigma_\lambda a(\partial)\tau]_\mu(b)
=
\Big(a(\lambda+x)\big(\big|_{x=\partial}[\sigma_\lambda \tau]\big)\Big)_\mu(b)
+
\Big(\sigma_{\lambda}(a(x))\big(\big|_{x=\partial}\tau\big)\Big)_\mu(b)
\,.\label{eq:gauge-pr3}
\end{align}
The proof of equation \eqref{eq:gauge-pr3} is the same, straightforward, 
computation needed to prove that $\CDer(A)$ is an LCAd, see Example \ref{ex:CDer-LCAd},
so we omit it. 
We prove instead equation \eqref{eq:gauge-pr2}, which anyway is similar.
The left-hand side of \eqref{eq:gauge-pr2} is
\begin{equation}\label{eq:gauge-pr4}
\begin{split}
& [\phi_\lambda a(\partial)\psi]_\mu(u)
=
\phi_\lambda\big(\big(a(\partial)\psi\big)_{\mu-\lambda}(u)\big)
-
\big(a(\partial)\psi\big)_{\mu-\lambda}\big(\phi_\lambda(u)\big) \\
& =
\phi_\lambda\Big(
\big(\big|_{x=\partial}a^*(\mu-\lambda)\big)
\psi_{\mu-\lambda+x}(u)
\Big)
-
\big(\big|_{x=\partial}a^*(\mu-\lambda)\big)
\psi_{\mu-\lambda+x}\big(\phi_\lambda(u)\big) \\
& =
\sigma_\lambda\big(\big|_{x=\partial}a^*(\mu-\lambda)\big)
\psi_{\mu-\lambda+x}(u)
+
\big(\big|_{x=\partial}a^*(\mu-\lambda)\big)
\phi_\lambda\Big(
\psi_{\mu-\lambda+x}(u)
\Big) \\
& -
\big(\big|_{x=\partial}a^*(\mu-\lambda)\big)
\psi_{\mu-\lambda+x}\big(\phi_\lambda(u)\big) 
\,.
\end{split}
\end{equation}
The first term in the right-hand side of \eqref{eq:gauge-pr2} is
\begin{align*}
& \Big(a(\lambda+x)\big(\big|_{x=\partial}[\phi_\lambda \psi]\big)\Big)_\mu(u)
=
\big(\big|_{y=\partial}a^*(\mu-\lambda)\big)[\phi_\lambda \psi]_{\mu+y}(u) \\
& =
\big(\big|_{y=\partial}a^*(\mu-\lambda)\big)
\phi_\lambda \psi_{\mu+y-\lambda}(u)
-
\big(\big|_{y=\partial}a^*(\mu-\lambda)\big)
\psi_{\mu+y-\lambda} \phi_\lambda (u)
\,,
\end{align*}
which coincides with the second and third terms in the right-hand side of \eqref{eq:gauge-pr4}.
Moreover, the second term in the right-hand side of \eqref{eq:gauge-pr2} is
\begin{align*}
& \Big(\sigma_{\lambda}(a(x))\big(\big|_{x=\partial}\psi\big)\Big)_\mu(u)
=
\big(\big|_{y=\partial}\sigma_{\lambda}(a(x))\big)
(\big(\big|_{x=\partial}\psi\big)_{\mu+y}(u) \\
& =
\big(\big|_{y=\partial}\sigma_{\lambda}(a(-\mu-y))\big)
\psi_{\mu+y}(u)
=
\big(\sigma_{\lambda}(\big|_{z=\partial}a(\lambda-\mu-z))\big)
\psi_{\mu+z-\lambda}(u) \\
& =
\big(\sigma_{\lambda}(\big|_{z=\partial}a^*(\mu-\lambda))\big)
\psi_{\mu+z-\lambda}(u)
\,,
\end{align*}
which coincides with the first term in the right-hand side of \eqref{eq:gauge-pr4}.
This proves \eqref{eq:gauge-pr2} and completes the proof claim (a).
Claims (b) and (c) follow immediately by the definitions.
\end{proof}

In a ``specular'' fashion, the \emph{right gauge LCAd} is the space 
$\mc{RG}(A,M)\subset\RCEnd(M)\oplus\RCDer(A)$ consisting of all pairs $(\psi,\tau)$
satisfying (cf. \eqref{eq:rcder})
\begin{equation}\label{eq:rderM}
\psi_\lambda(a(\partial)u)
=
\tau_{\lambda+x}(a(x))\big(\big|_{x=\partial}u\big)
+
\big(\big|_{x=\partial}
a^*(\lambda)
\big)
\psi_{\lambda+x}(u)
\,.
\end{equation}
The left $A[\partial]$-module structure of $\mc{RG}(A,M)$ is defined as
$a(\partial)(\psi,\tau)=(a(\partial)\psi,a(\partial)\tau)$, where (cf. \eqref{eq:rcder-mod})
\begin{equation}\label{eq:rgauge-Ad}
(a(\partial)\psi)_\lambda(u)
=
a(\lambda+x)\big(\big|_{x=\partial}\psi_\lambda(u)\big)
\,\,,\quad
(a(\partial)\tau)_\lambda(b)
=
a(\lambda+x)\big(\big|_{x=\partial}\tau_\lambda(b)\big)
\,.
\end{equation}
The LCA $\lambda$-bracket on  $\mc{RG}(A,M)$ is defined as
$[(\psi,\tau)_\lambda(\rho,\xi)]^r
=
\big([\psi_\lambda\rho]^r,[\tau_\lambda\xi]^r\big)$
where (cf. \eqref{eq:rcder-lambda})
\begin{equation}\label{eq:rgauge-lambda}
\begin{split}
& [\psi_\lambda\rho]^r_\mu(v)
=
\psi^*_\lambda(\rho_{\mu}(v))-\rho_{\lambda+\mu}(\psi_\mu(v))
\,,\\
& [\tau_\lambda\xi]^r_\mu(a)
=
\tau^*_\lambda(\xi_{\mu}(a))-\xi_{\lambda+\mu}(\tau_\mu(a))
\,.
\end{split}
\end{equation}
Finally, the anchor map $\theta^r:\,\mc{RG}(A,M)\to\CDer(A)$ is
\begin{equation}\label{eq:rgauge-anchor}
\theta^r(\psi,\tau)=\tau^*
\,.
\end{equation}

\begin{proposition}\label{prop:rgauge}
\begin{enumerate}[(a)]
\item
The space $\mc{RG}(A,M)\subset\RCEnd(M)\oplus\RCDer(A)$ defined by \eqref{eq:rderM}
with the $A[\partial]$-action \eqref{eq:rgauge-Ad}, the $\lambda$-bracket \eqref{eq:rgauge-lambda}
and the anchor map \eqref{eq:rgauge-anchor} is a Lie conformal algebroid.
\item
There is an isomorphism of LCAd 
$\mc G(A,M)\stackrel{\sim}{\to}\mc{RG}(A,M)$ mapping $(\phi,\sigma)\mapsto(\phi^*,\sigma^*)$.
\end{enumerate}
\end{proposition}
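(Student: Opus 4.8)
The plan is to reduce both claims to Proposition \ref{prop:gauge} via the adjunction $\phi\mapsto\phi^*$ of \eqref{eq:phi-dual}, applied componentwise. Concretely, I would show that the map $*:(\phi,\sigma)\mapsto(\phi^*,\sigma^*)$ is a bijection $\mc G(A,M)\to\mc{RG}(A,M)$ intertwining all three pieces of structure, deduce (a) by transport of structure, and then obtain (b) essentially for free.

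First I would check well-definedness: that if $(\phi,\sigma)$ satisfies the left gauge condition \eqref{eq:derM}, then $(\phi^*,\sigma^*)$ satisfies the right gauge condition \eqref{eq:rderM}. This is a direct computation from the definition \eqref{eq:phi-dual} of the adjoint, entirely parallel to the already-recorded fact that $\phi\in\CDer(A,M)$ implies $\phi^*\in\RCDer(A,M)$. Bijectivity is then automatic: on the ambient spaces $\phi\mapsto\phi^*$ is a bijection $\CEnd(M)\to\RCEnd(M)$ and $\CDer(A)\to\RCDer(A)$, it is an involution ($(\phi^*)^*=\phi$), and the two gauge conditions correspond under it, so $*$ restricts to a bijection between the subspaces $\mc G(A,M)$ and $\mc{RG}(A,M)$.

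Next I would verify that $*$ respects the three operations componentwise. For the $A[\partial]$-action, comparing \eqref{eq:gauge-Ad} with \eqref{eq:rgauge-Ad}, each component is exactly the known statement that $*$ carries the left action \eqref{eq:lcder-mod} to the right action \eqref{eq:rcder-mod}. For the $\lambda$-bracket, comparing \eqref{eq:gauge-lambda} with \eqref{eq:rgauge-lambda}, each component is the statement that $*$ carries \eqref{eq:lcder-lambda} to \eqref{eq:rcder-lambda}; this was established in the text for $\CDer(A)\to\RCDer(A)$ and holds verbatim for $\CEnd(M)\to\RCEnd(M)$, since that computation never invokes the derivation property. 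For the anchors, $\theta^r(\phi^*,\sigma^*)=(\sigma^*)^*=\sigma=\theta(\phi,\sigma)$ by \eqref{eq:gauge-anchor}, \eqref{eq:rgauge-anchor}, and the involutivity of $*$.

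With these intertwining properties in hand, claim (a) follows by transport of structure from Proposition \ref{prop:gauge}: since $*$ is a bijection carrying the $A[\partial]$-module structure, the $\lambda$-bracket, and the anchor of the LCAd $\mc G(A,M)$ to those of $\mc{RG}(A,M)$, every LCAd axiom transfers automatically. Claim (b) is then immediate, as $*$ is by construction a structure-preserving bijection between two LCAd, hence an LCAd isomorphism. The only genuine calculation is the well-definedness check in the first step, exchanging \eqref{eq:derM} and \eqref{eq:rderM}; I expect this to be the main (though routine) obstacle, as it mirrors the statement already recorded for conformal derivations, while everything else reduces to componentwise invocation of properties of $*$ proved earlier.
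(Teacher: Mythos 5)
Your proposal is correct, but it is organized differently from the paper's. The paper proves (a) by direct verification, stating that the computations are the same as those in the proof of Proposition \ref{prop:gauge}(a) with left conformal structures replaced by right ones, and then treats (b) as a separate, routine exercise. You invert this logic: you first establish that $*:(\phi,\sigma)\mapsto(\phi^*,\sigma^*)$ is a bijection $\mc G(A,M)\to\mc{RG}(A,M)$ intertwining the $A[\partial]$-actions, the $\lambda$-brackets, and the anchors, and then obtain (a) by transport of structure and (b) for free. Your route is legitimate and arguably more economical, since it avoids repeating the long verification of Proposition \ref{prop:gauge}(a); it leans on facts the paper has already recorded (that $*$ exchanges \eqref{eq:lcder-mod} with \eqref{eq:rcder-mod}, \eqref{eq:lcder-lambda} with \eqref{eq:rcder-lambda}, and $\CDer$ with $\RCDer$, and that $(\phi^*)^*=\phi$), and you correctly identify the one genuinely new computation, namely that \eqref{eq:derM} for $(\phi,\sigma)$ is equivalent to \eqref{eq:rderM} for $(\phi^*,\sigma^*)$, which mirrors the already-stated equivalence of \eqref{eq:lcder} and \eqref{eq:rcder} under $*$. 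The trade-off is that your argument requires spelling out the intertwining identities carefully (in particular checking that the $\CEnd(M)$-component computations never use the derivation property, as you note), whereas the paper's direct approach requires no new lemmas but duplicates an entire page of computation; either is acceptable, and yours makes claim (b) structurally transparent rather than an afterthought.
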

\begin{proof}
The proof of claim (a) is the same as Proposition \ref{prop:gauge}(a).
Claim (b) is straightforward and is left as an exercise.
\end{proof}

\subsection{The quotient LAd of an LCAd}\label{sec:2.2a}

Let $R$ be a commutative associative unital algebra 
and let $\Der(R)$ be the Lie algebra of derivations of $R$.
Recall that a \emph{Lie algebroid} (LAd) over a  $R$
is an $R$-module $F$ endowed with 
a Lie algebra bracket $[\cdot\,,\,\cdot]:\,F\otimes F\to F$
and a Lie algebra homomorphism $\theta:\,F\to\Der(R)$ commuting with the action of $R$, 
called the anchor map, such that
$[u,av]=a[u,v]+\theta(u)(a)v$ for all $a\in R,\,u,v\in F$.

\begin{proposition}\label{prop:quot-LA}
Let $E$ be an LCAd over the differential algebra $A$.
Consider the commutative associative unital algebra $\bar A=A/\langle\partial A\rangle_{A}$,
where $\langle\partial A\rangle_{A}=A\partial A$ is the algebra ideal generated by $\partial A$.
Then, we have a LAd $(\bar E,[\cdot\,,\,\cdot]^{\bar{}},\bar\theta)$ over the algebra $\bar A$, 
called the \emph{quotient LAd} of $E$, defined as follows.
As an $\bar A$-module,
$$
\bar E=E/\langle\partial E\rangle_{A}\,,
\quad\text{where }
\langle\partial E\rangle_{A}=A\partial E
$$
with the $\bar A$-action induced by the $A[\partial]$-action on $E$.
The Lie bracket on $\bar E$ is induced by the $\lambda$-bracket on $E$ at $\lambda=0$:
$$
[\bar u,\bar v]=\overline{[u_\lambda v]}\big|_{\lambda=0}
\,.
$$
The anchor map on $\bar E$ is induced by the anchor map on $E$ at $\lambda=0$:
$$
\bar\theta(\bar u)(\bar a)=\overline{\theta(u)_{\lambda}(a)}\big|_{\lambda=0}
\,.
$$
\end{proposition}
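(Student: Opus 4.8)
The plan is to verify well-definedness of the four structures $\bar A,\bar E,\bar\theta,[\cdot\,,\,\cdot]$ and then deduce the Lie algebroid axioms by specializing the LCAd axioms of $E$ at $\lambda=\mu=0$. That $\bar A=A/\langle\partial A\rangle_A$ is a commutative associative unital algebra is clear, since $\langle\partial A\rangle_A=A\partial A$ is by construction an algebra ideal. For the $\bar A$-module structure on $\bar E=E/\langle\partial E\rangle_A$ the only thing to check is that the ideal $\langle\partial A\rangle_A$ acts trivially, i.e.\ $(A\partial A)E\subseteq A\partial E$. This follows at once from the identity $(\partial c)u=\partial(cu)-c\,\partial u$, valid in any left $A[\partial]$-module (on the left $\partial c$ is the derivative of $c$, while $\partial(cu)$ and $\partial u$ use the operator $\partial$ of $A[\partial]$): both summands on the right lie in $A\partial E$, and multiplying by $a\in A$ preserves $A\partial E$.

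Next I would set up the anchor. The starting point is that for every $u\in E$ the specialization $\theta(u)_0\colon A\to A$ is a derivation of $A$ commuting with $\partial$: the Leibniz identity \eqref{eq:lcder} at $\lambda=0$ gives the derivation property, and $\theta(u)_\lambda(\partial a)=(\partial+\lambda)\theta(u)_\lambda(a)$ at $\lambda=0$ gives $\theta(u)_0\,\partial=\partial\,\theta(u)_0$. Any such $\partial$-commuting derivation satisfies $\theta(u)_0(b\,\partial c)=\theta(u)_0(b)\,\partial c+b\,\partial\theta(u)_0(c)\in A\partial A$, hence preserves $\langle\partial A\rangle_A$ and induces a derivation $\overline{\theta(u)_0}$ of $\bar A$; this is the map $\bar\theta(\bar u)$. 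Two facts then make $\bar\theta$ a well-defined $\bar A$-linear map on $\bar E$, and they are the first genuinely computational points: (1) for $z\in\langle\partial E\rangle_A$ one has $\theta(z)_0(A)\subseteq\langle\partial A\rangle_A$, so that $\overline{\theta(z)_0}=0$; and (2) $\overline{\theta(au)_0}=\bar a\,\overline{\theta(u)_0}$ for $a\in A$. Both follow from the explicit $A[\partial]$-action \eqref{eq:lcder-mod} on $\CDer(A)$ together with $(\partial\phi)_\lambda=-\lambda\phi_\lambda$: writing $\theta(u)_\mu(b)=\sum_k c_k\mu^k$, a direct computation gives $\theta(au)_0(b)=\sum_j(\partial^j a)c_j=a\,\theta(u)_0(b)+\sum_{j\ge1}(\partial^j a)c_j$, whose tail lies in $\langle\partial A\rangle_A$, yielding (2), while the extra factor $\lambda$ in $(\partial\phi)_\lambda=-\lambda\phi_\lambda$ forces every term of $\theta(a\partial v)_0(b)$ to carry a positive power of $\partial$, so that it lies entirely in $\langle\partial A\rangle_A$, yielding (1). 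Finally, reading condition (ii) of Definition \ref{def:LCAd} at $\lambda=\mu=0$ gives $\theta([u_0v])_0=[\theta(u)_0,\theta(v)_0]$, which descends to show that $\bar\theta\colon\bar E\to\Der(\bar A)$ is a homomorphism of Lie algebras.

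The heart of the argument, and the step I expect to be the main obstacle, is the well-definedness of the bracket $[\bar u,\bar v]=\overline{[u_0v]}$, i.e.\ the inclusions $[u_0z],[z_0u]\in\langle\partial E\rangle_A$ for all $u\in E$ and $z\in\langle\partial E\rangle_A$. The second argument is straightforward: sesquilinearity gives $[u_0\,\partial v]=\partial[u_0v]\in\partial E$, and condition (i) of Definition \ref{def:LCAd} at $\lambda=0$, applied to a generator $z=a\partial v$, yields $[u_0\,a\partial v]=a\,\partial[u_0v]+\theta(u)_0(a)\,\partial v\in A\partial E$; additivity then covers all $z$. The first argument is the delicate one, and here I would invoke skewsymmetry (LCA-ii). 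Writing $[p_\mu q]=\sum_{n\ge0}\frac{\mu^n}{n!}(p_{(n)}q)$, skewsymmetry specializes at $\mu=0$ to the identity
\[
[q_0p]=-[p_0q]-\sum_{n\ge1}\frac{(-1)^n}{n!}\partial^n(p_{(n)}q)\,,
\]
whose tail lies in $\partial E$. Taking $p=u$, $q=z$ gives $[z_0u]=-[u_0z]-(\text{something in }\partial E)\in A\partial E$, by the case already treated; the same identity read modulo $\langle\partial E\rangle_A$ simultaneously establishes skewsymmetry of the induced bracket, $[\bar v,\bar u]=-[\bar u,\bar v]$.

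Once well-definedness is in hand, the remaining Lie algebroid axioms are pure specialization. The Jacobi identity for $[\cdot\,,\,\cdot]$ on $\bar E$ is exactly the LCA Jacobi identity (LCA-iii) at $\lambda=\mu=0$, namely $[u_0[v_0w]]-[v_0[u_0w]]=[[u_0v]_0w]$, which descends verbatim since representatives may now be chosen freely. The Leibniz rule is condition (i) of Definition \ref{def:LCAd} at $\lambda=0$ with $a(\partial)=a$ constant, $[u_0\,av]=a[u_0v]+\theta(u)_0(a)v$, which projects to $[\bar u,\bar a\,\bar v]=\bar a\,[\bar u,\bar v]+\bar\theta(\bar u)(\bar a)\,\bar v$. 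Together with the $\bar A$-linearity and Lie-homomorphism property of $\bar\theta$ established above, this gives all the axioms of a Lie algebroid over $\bar A$, completing the proof.
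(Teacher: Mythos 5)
Your proof is correct and follows the only natural route here: the paper itself dismisses this proposition with "a sequence of straightforward verifications," and your write-up carries out exactly those verifications. The two points you rightly flag as the genuine content — that $\theta(a\partial v)_0(b)$ and the tail of $\theta(au)_0(b)-a\theta(u)_0(b)$ land in $A\partial A$, and that $[z_0u]\in A\partial E$ via the $\mu=0$ specialization of skewsymmetry — are handled correctly.
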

\begin{proof}
The proof reduces to a sequence of straightforward verifications.
\end{proof}

\subsection{Jet algebra and the current LCAd of a LAd}\label{sec:2.2b}

Let $R$ be a commutative associative unital algebra. 
Recall that the \emph{jet algebra} $J_\infty R$ of $R$
is the unique (up to isomorphism) differential algebra
endowed with an (injective) algebra homomorphism $\iota:\,R\to J_\infty R$,
satisfying the following universal property:
for every algebra homomorphism $f:\,R\to A$ to a differential algebra $A$,
there exists a unique homomorphism of differential algebras $\tilde f:\,J_\infty R\to A$
making the following diagram commute:
$$
\xymatrix{
R \ar[r]^{f}\ar@{^(->}[d]_{\iota} & A \\
J_\infty R \ar@/^-1pc/[ur]_{\exists !\tilde f}
}
$$

For example, if we consider the algebra of polynomials in $k$ variables $R_k=\mb F[x_1,\dots,x_k]$,
its jet algebra is the algebra of differential polynomials in the same variables:
$$
J_\infty R_k=\mb F[x_j^{(n)};\,j=1,\dots,k,\,n\in\mb Z_+]
\,.
$$
It is a differential algebra with derivation $\partial:\,J_\infty R_k\to J_\infty R_k$
defined on generators as $\partial x_j^{(n)}=x_j^{(n+1)}$, $j=1,\dots,k,\,n\in\mb Z_+$,
and extended to $J_\infty R_k$ by the Leibniz rule.
The injective algebra homomorphism $\iota:\,R_k\hookrightarrow J_\infty R_k$ is defined
on generators by $\iota(x_j)=x_j^{(0)}$, $j=1,\dots,k$.
We also have a canonical surjective algebra homomorphism 
$\chi:\,J_\infty R_k\twoheadrightarrow R_k$
defined on generators by $\chi(x_j^{(n)})=\delta_{n,0}x_j$, $j=1,\dots,k,\,n\geq0$,
and clearly $\iota\circ\chi=\id_{R_k}$.

More generally, 
if the algebra $R$ is finitely generated, then it can be realised as the quotient
of an algebra of polynomials in finitely many variables, say $R=R_k/I$.
In this case the jet algebra can be constructed explicitly as the quotient
$$
J_\infty R=J_\infty R_k\big/\langle I\rangle_\partial
\,,
$$
where $\langle I\rangle_\partial$ is the differential ideal of $J_\infty R_k$ generated by $I$:
$$
\langle I\rangle_\partial
=
\sum_{n=0}^\infty (J_\infty R_k)\partial^nI
\,.
$$
The derivation $\partial:\, J_\infty R\to J_\infty R$, 
the injective algebra homomorphism $\iota:\,R\hookrightarrow J_\infty R$, 
and the surjective algebra homomorphism $\chi:\,J_\infty R\to R$,
are induced by the corresponding maps on the algebras of polynomials,
which factor through the quotients. In particular, $\iota\circ\chi=\id_R$.

\begin{lemma}\label{lem:jet-der}
Let $R$ be a finitely generated commutative associative unital algebra.
For every derivation $D\in\Der(R)$ there exists a unique conformal derivation 
$\widehat D\in\CDer(J_\infty R)$
making the following diagram commute:
\begin{equation}\label{eq:hatD}
\xymatrix{
R \ar[r]^{D}\ar@{^(->}[d]_{\iota} & R\ar@{^(->}[d]_{\iota} \\
J_\infty R \ar[r]_{\widehat D_\lambda} & J_\infty R[\lambda]
}
\end{equation}
\end{lemma}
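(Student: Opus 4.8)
The plan is to prove existence and uniqueness by exploiting that a conformal derivation of $J_\infty R$ is rigidly determined by its restriction to $\iota(R)$. Indeed, any $\widehat D\in\CDer(J_\infty R)$ satisfies the Leibniz rule and the sesquilinearity relation $\widehat D_\lambda(\partial a)=(\partial+\lambda)\widehat D_\lambda(a)$ (cf. \eqref{eq:lcder}), so its value on any product $\partial^{n_1}\iota(r_1)\cdots\partial^{n_s}\iota(r_s)$ is forced once the values $\widehat D_\lambda(\iota(r))$, $r\in R$, are known; and $J_\infty R$ is generated as a differential algebra by $\iota(R)$. The commutativity of \eqref{eq:hatD} forces $\widehat D_\lambda(\iota(r))=\iota(D(r))$ (in particular independent of $\lambda$). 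This immediately yields uniqueness: the difference of two solutions is a conformal derivation vanishing on $\iota(R)$, hence, by Leibniz and sesquilinearity, vanishing on all of $J_\infty R$.

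For existence I would reduce to the free case and then descend. Write $R=R_k/I$ with $R_k=\mb F[x_1,\dots,x_k]$, so that $J_\infty R=J_\infty R_k/\langle I\rangle_\partial$, and denote by $\pi:R_k\twoheadrightarrow R$ the quotient map and by $\iota_k:R_k\hookrightarrow J_\infty R_k$ the canonical inclusion. First I lift $D$: choosing for each generator $x_j$ an element $\tilde D(x_j)\in R_k$ with $\pi(\tilde D(x_j))=D(\pi(x_j))$ and extending by the Leibniz rule gives a derivation $\tilde D\in\Der(R_k)$ with $\pi\circ\tilde D=D\circ\pi$; consequently $\tilde D(I)\subseteq I$, since $\pi(\tilde D(f))=D(\pi(f))=0$ for $f\in I$. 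Now I apply the bijection $\CDer(J_\infty R_k)\simeq (J_\infty R_k)[\lambda]^{\oplus k}$ of Section \ref{sec:2.15}: taking the tuple $(f_j(\lambda))_j$ with $f_j(\lambda)=\iota_k(\tilde D(x_j))$ (constant in $\lambda$) produces a conformal derivation $\widehat{\tilde D}\in\CDer(J_\infty R_k)$. For $r\in R_k$ the element $\widehat{\tilde D}_\lambda(\iota_k(r))$ is then $\lambda$-independent, and the maps $r\mapsto\widehat{\tilde D}_\lambda(\iota_k(r))$ and $r\mapsto\iota_k(\tilde D(r))$ are both derivations $R_k\to J_\infty R_k$ agreeing on the generators $x_j$, hence $\widehat{\tilde D}_\lambda(\iota_k(r))=\iota_k(\tilde D(r))$ for all $r\in R_k$.

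The crux, and the step I expect to be the main obstacle, is to show that $\widehat{\tilde D}$ descends to the quotient, i.e. that $\widehat{\tilde D}_\lambda(\langle I\rangle_\partial)\subseteq\langle I\rangle_\partial[\lambda]$. For $f\in I$ we have $\widehat{\tilde D}_\lambda(\iota_k(f))=\iota_k(\tilde D(f))\in\iota_k(I)\subseteq\langle I\rangle_\partial$, by the previous step and $\tilde D(I)\subseteq I$. A general element of $\langle I\rangle_\partial=\sum_n (J_\infty R_k)\,\partial^n\iota_k(I)$ is a sum of terms $g\,\partial^n\iota_k(f)$ with $g\in J_\infty R_k$ and $f\in I$; applying $\widehat{\tilde D}_\lambda$ and using sesquilinearity together with the $\partial$-invariance of $\langle I\rangle_\partial$ propagates membership through the $\partial^n$, while the Leibniz rule handles the factor $g$. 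Hence $\widehat{\tilde D}$ preserves $\langle I\rangle_\partial$ and induces a conformal derivation $\widehat D\in\CDer(J_\infty R)$; this descended object is independent of the chosen lift $\tilde D$ by the uniqueness established above. Finally, reducing the identity $\widehat{\tilde D}_\lambda(\iota_k(r))=\iota_k(\tilde D(r))$ modulo $\langle I\rangle_\partial$ gives $\widehat D_\lambda(\iota(r))=\iota(D(r))$, so \eqref{eq:hatD} commutes, completing the proof.
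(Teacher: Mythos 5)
Your proof is correct and follows essentially the same route as the paper: the paper defines $\widehat D_\lambda$ directly by the formula $\sum_{j,n}\frac{\partial p}{\partial x_j^{(n)}}(\lambda+\partial)^n f_j$ on representatives, where $f_j\in R_k$ represents $D([x_j]_I)$ — which is exactly the conformal derivation you obtain from the tuple $(\iota_k(\tilde D(x_j)))_j$ via the bijection $\CDer(J_\infty R_k)\simeq (J_\infty R_k)[\lambda]^{\oplus k}$. You merely spell out the well-definedness (descent through $\langle I\rangle_\partial$) and uniqueness steps that the paper declares easy or obvious, and your verifications are sound.
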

\begin{proof}
The conformal derivation $\widehat{D}_\lambda$ is explicitly defined as follows.
Let $[x_j]_I\in R,\,j=1,\dots,k$, be the classes of $x_j\in R_k$ in the quotient algebra,
let $D([x_j]_I)=[f_j]_I$ be the corresponding images via the derivation $D\in\Der(R)$,
and fix representatives $f_j\in R_k$.
If $p\in J_\infty R_k$ and if $[p]_{\langle I\rangle_\partial}$ is its class in $J_\infty R$,
then
$$
\widehat{D}_\lambda([p]_{\langle I\rangle_\partial})
=
\Big[
\sum_{j=1}^k\sum_{n=0}^\infty\frac{\partial p}{\partial x_j^{(n)}}(\lambda+\partial)^n f_j
\Big]_{\langle I\rangle_\partial}
\,.
$$
It is easy to check that $\widehat{D}_\lambda$ is well defined by the above formula,
that it is indeed a conformal derivation of $J_\infty R$,
and that $\widehat{D}_\lambda\circ\iota=\iota\circ D$.
The uniqueness of such conformal derivation is obvious.
\end{proof}
\begin{proposition}\label{prop:cur}
Let $(F,[\cdot\,,\,\cdot],\theta)$ be a LAd over a finitely generated commutative associative unital algebra $R$.
We have an LCAd $(\widehat F,\partial,[\cdot\,_\lambda\,\cdot]^{\widehat{}},\widehat{\theta})$ 
over the differential algebra $\widehat R=J_\infty R$, called the \emph{current LCAd} of $F$, 
defined as follows.
As a $\widehat R[\partial]$-module, 
$$
\widehat F=\widehat R[\partial]\otimes_R F\,.
$$
The $\lambda$-bracket on $\widehat F$ is uniquely defined by letting
\begin{equation}\label{eq:cur-lambda}
[1\otimes_Ru \,_\lambda\,1\otimes_Rv]^{\widehat{}}
=
1\otimes_R[u,v]
\,,\,\,\text{ for } u,v\in F
\end{equation}
and extended to $\widehat F$ by the total formula \eqref{eq:tot-form}.
The anchor map on $\widehat F$ is uniquely defined, thanks to Lemma \ref{lem:jet-der}, by letting
\begin{equation}\label{eq:cur-anchor}
\widehat{\theta}(1\otimes_Ru)_\lambda
=
\widehat{\theta(u)}_{\lambda}
\,,\,\,\text{ for } u\in F
\,,
\end{equation}
and extended to $\widehat F$ to be a homomorphism of $\widehat A[\partial]$-modules,
using \eqref{eq:lcder-mod}.
Furthermore, the quotient LAd of the current LCAd of $F$ is canonically isomorphic to the LAd $F$.
\end{proposition}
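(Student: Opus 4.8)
The plan is to equip $\widehat F=\widehat R[\partial]\otimes_R F$ with the left $\widehat R[\partial]$-module structure given by left multiplication on the first factor, so that the elements $1\otimes_R u$ ($u\in F$) generate $\widehat F$ over $\widehat R[\partial]$; these generators are not free, since they satisfy $1\otimes_R ru=\iota(r)(1\otimes_R u)$ for $r\in R$. The overall strategy is to invoke Lemma \ref{lem:gener}: as the $\lambda$-bracket is \emph{defined} by extending \eqref{eq:cur-lambda} through the total formula \eqref{eq:tot-form}, and $\widehat\theta$ is extended from \eqref{eq:cur-anchor} as an $\widehat R[\partial]$-module homomorphism, conditions (i) and (i') of Definition \ref{def:LCAd} hold by construction; it then remains only to verify skewsymmetry, the Jacobi identity and condition (ii) on the generators $1\otimes_R u$. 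The genuine content is therefore twofold: first, that $\widehat\theta$ and the $\lambda$-bracket are \emph{well defined} on the tensor product; second, that the three remaining axioms hold on generators.

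Well-definedness is the main obstacle. For the anchor, I would first record the identity $\widehat{rD}=\iota(r)\widehat D$ in $\CDer(\widehat R)$ (for $r\in R$, $D\in\Der(R)$, with $\iota(r)\widehat D$ as in \eqref{eq:lcder-mod}), which is immediate from the explicit formula and the uniqueness in Lemma \ref{lem:jet-der} by evaluating both sides on the differential generators $\iota(x_j)$. Since $\theta\colon F\to\Der(R)$ is $R$-linear, this yields $\widehat{\theta(ru)}=\iota(r)\widehat{\theta(u)}$, which is exactly the compatibility of \eqref{eq:cur-anchor} with the relation $1\otimes_R ru=\iota(r)(1\otimes_R u)$; hence $\widehat\theta$ descends to $\widehat F$ and is a left $\widehat R[\partial]$-module homomorphism into $\CDer(\widehat R)$. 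For the bracket, the relations among the generators are generated by $1\otimes_R ru=\iota(r)(1\otimes_R u)$, and by the construction of the bracket from \eqref{eq:tot-form} these propagate to all of $\widehat F$ once they hold at the ``seed'' level and once $\widehat\theta$ is well defined (the latter ensures that the anchor terms in \eqref{eq:tot-form} also respect the relations). Thus it suffices to check $[1\otimes u_\lambda\,\iota(r)(1\otimes v)]=[1\otimes u_\lambda\,1\otimes rv]$ and the symmetric relation in the first slot. Expanding the left-hand side by condition (i), with $a(\partial)=\iota(r)$, and using $\widehat{\theta(u)}_\lambda\circ\iota=\iota\circ\theta(u)$ from \eqref{eq:hatD}, one finds $\iota(r)\otimes_R[u,v]+1\otimes_R\bigl(\theta(u)(r)v\bigr)$, which by the tensor relation equals $1\otimes_R\bigl(r[u,v]+\theta(u)(r)v\bigr)=1\otimes_R[u,rv]$, the right-hand side; the first slot is handled symmetrically via condition (i'). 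Hence the well-definedness of the bracket is precisely the Leibniz rule of $F$, transported through the anchor.

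With well-definedness established, Lemma \ref{lem:gener} reduces the axioms to the generators, where they collapse to the corresponding identities in $F$ because $[1\otimes u_\lambda 1\otimes v]=1\otimes[u,v]$ is independent of the spectral parameter. Indeed, skewsymmetry (LCA-ii) reduces to $1\otimes[v,u]=-\,1\otimes[u,v]$, which is skewsymmetry of the Lie bracket, and in the notation of \eqref{eq:Jac} one computes $J_{\lambda,\mu}(1\otimes u,1\otimes v,1\otimes w)=1\otimes\bigl([u,[v,w]]-[v,[u,w]]-[[u,v],w]\bigr)=0$ by the Jacobi identity of $F$. For condition (ii) I would establish the auxiliary identity $[\widehat{D_1}_\lambda\widehat{D_2}]=\widehat{[D_1,D_2]}$ in $\CDer(\widehat R)$ (for $D_1,D_2\in\Der(R)$), by evaluating both conformal derivations on the generators $\iota(x_j)$ through \eqref{eq:hatD} and \eqref{eq:lcder-lambda} and using that a conformal derivation of the finitely generated differential algebra $\widehat R$ is determined by its values there; combined with $\theta([u,v])=[\theta(u),\theta(v)]$, this gives condition (ii) on generators.

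Finally, for the identification of the quotient LAd I would use the surjection $\chi\colon\widehat R\to R$, which satisfies $\chi\circ\partial=0$ and $\chi\circ\iota=\id_R$, so that $\langle\partial\widehat R\rangle_{\widehat R}=\ker\chi$ and $\chi$ induces an isomorphism $\overline{\widehat R}\stackrel{\sim}{\to}R$. At the module level I would define $\Phi\colon\widehat F\to F$ by $a(\partial)\otimes_R u\mapsto\chi\bigl(a(\partial)1\bigr)u$; it is well defined over $\otimes_R$ and annihilates $\langle\partial\widehat F\rangle_{\widehat R}=\widehat R\,\partial\widehat F$ (both using $\chi\circ\partial=0$), while $\Phi(1\otimes_R u)=u$, so $\Phi$ descends to a left inverse of the $\overline{\widehat R}$-linear map $u\mapsto\overline{1\otimes_R u}$, which is therefore an isomorphism $F\stackrel{\sim}{\to}\overline{\widehat F}$. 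That this isomorphism intertwines the LAd structures is read off from Proposition \ref{prop:quot-LA}: the bracket matches since $\overline{[1\otimes u_\lambda 1\otimes v]}\big|_{\lambda=0}=\overline{1\otimes[u,v]}$, and the anchor matches since $\chi\circ\widehat D_0=D\circ\chi$ (checked on the generators $\iota(x_j)$), so that $\overline{\widehat\theta(1\otimes u)_\lambda(a)}\big|_{\lambda=0}$ corresponds under $\chi$ to $\theta(u)(\chi(a))$.
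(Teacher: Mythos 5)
Your proposal is correct and follows the same overall strategy as the paper's proof: establish well-definedness of $\widehat\theta$ and of the $\lambda$-bracket on the tensor product via the key identity $\widehat{\theta(ru)}=\iota(r)\widehat{\theta(u)}$ (the paper's \eqref{eq:pr29.5}, obtained exactly as you do from uniqueness in Lemma \ref{lem:jet-der}), then invoke Lemma \ref{lem:gener} to reduce the remaining axioms to the generators $1\otimes_R u$, where they collapse to the LAd axioms of $F$. The two points where you diverge are worth flagging. First, for well-definedness of the bracket the paper verifies directly that the total formula gives the same output on $a(\partial)\circ r\otimes_R u$ and $a(\partial)\otimes_R ru$ for \emph{arbitrary} $a(\partial),b(\partial)$ (its computations \eqref{eq:pr29.3}--\eqref{eq:pr29.8}), and then separately checks condition (i) for an arbitrary coefficient $c(\partial)$ (its \eqref{eq:pr30.1}, which needs that $\widehat{\theta(u)}$ is a conformal derivation to split $\widehat{\theta(u)}_{\lambda+x}(c(y+z)(|_{z=\partial}b(y)))$). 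You instead reduce everything to the seed case $a(\partial)=b(\partial)=1$ and assert propagation ``by the construction of the bracket from \eqref{eq:tot-form}''; your seed computation correctly isolates the essential input (the Leibniz rule of $F$ transported through $\iota$ and the anchor), but the propagation and the claim that (i), (i$'$) ``hold by construction'' is where the actual computation lives --- note that the paper, in the analogous Proposition \ref{prop:extension}, explicitly warns that (i) and (i$'$) must still ``be checked directly'' even when the bracket is \emph{defined} by the total formula, since the equivalence of \eqref{eq:tot-form} with (i)$+$(i$'$) is only automatic for a bracket already known to satisfy them. Second, your identification of the quotient LAd via the canonical surjection $\chi:\widehat R\to R$ (with $\chi\circ\partial=0$, $\chi\circ\iota=\id_R$) is a cleaner, presentation-free route than the paper's, which computes $\overline{\widehat R}$ and $\overline{\widehat F}$ explicitly from $R=R_k/I$; the only thing to add to your argument is the (easy) surjectivity of $u\mapsto\overline{1\otimes_R u}$, which follows since $a(\partial)\otimes_R u\equiv a_0\otimes_R u$ modulo $\widehat R\,\partial\widehat F$.
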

\begin{proof}
By \eqref{eq:lcder-mod} the anchor map should be defined on the whole space $\widehat F$ 
by the following formula
($a(\partial)\in J_\infty R[\partial],\,u\in F,\,b\in J_\infty R$):
\begin{equation}\label{eq:cur-anchor-tot}
\widehat\theta(a(\partial)\otimes_R u)_\lambda(b)
=
\big(\big|_{x=\partial}a^*(\lambda)\big)\widehat{\theta(u)}_{\lambda+x}(b)
\,.
\end{equation}
We need to check that it is indeed well defined 
on the tensor product space $\widehat F=J_\infty R[\partial]\otimes_R F$,
that it defines a conformal derivation of $\widehat R=J_\infty R$,
and that the map $\widehat\theta$ so defined commutes with the left action of $J_\infty R[\partial]$.
To show that it is well defined, we need to show that the formulas that we get for
$\widehat\theta(a(\partial)\circ r\otimes_R u)_\lambda(b)$
and $\widehat\theta(a(\partial)\otimes_R ru)_\lambda(b)$ are the same for every $r\in R$.
Indeed, by \eqref{eq:cur-anchor-tot}, we have
\begin{equation}\label{eq:pr29.1}
\widehat\theta(a(\partial)\circ r\otimes_R u)_\lambda(b)
=
\big(\big|_{x=\partial}ra^*(\lambda)\big)\widehat{\theta(u)}_{\lambda+x}(b)
\,,
\end{equation}
and
\begin{equation}\label{eq:pr29.2}
\widehat\theta(a(\partial)\otimes_R ru)_\lambda(b)
=
\big(\big|_{x=\partial}a^*(\lambda)\big)\widehat{\theta(ru)}_{\lambda+x}(b)
\,.
\end{equation}
On the other hand, since $\theta$ commutes with the action of $R$,
and by the universal property \eqref{eq:hatD} of $\widehat{r\theta(u)}$,
we have that 
\begin{equation}\label{eq:pr29.5}
\widehat{\theta(ru)}_\lambda
=
\widehat{r\theta(u)}_\lambda
=
\big(\big|_{y=\partial}r\big)\widehat{\theta(u)}_{\lambda+y}
\,.
\end{equation}
(Indeed, the right-hand side above is a conformal derivation of $J_\infty R$ such that
$\big(\big|_{y=\partial}r\big)\widehat{\theta(u)}_{\lambda+y}\circ\iota=\iota\circ(r\theta(u))$,
hence it equals the left-hand side by uniqueness.)
Using \eqref{eq:pr29.5}, we immediately get that the right-hand sides 
of \eqref{eq:pr29.1} and \eqref{eq:pr29.2} coincide.
Since $\widehat{\theta(u)}_{\lambda}$ is a conformal derivation of $J_\infty R$,
so is the map $\widehat\theta(a(\partial)\circ r\otimes_R u)_\lambda$ defined by \eqref{eq:cur-anchor-tot}.
Furthermore, $\widehat\theta(a(\partial)\circ r\otimes_R u)$ commutes with the left action
of $J_\infty R[\partial]$ by construction.

Next, by \eqref{eq:tot-form} the $\lambda$-bracket should be defined on the whole space $\widehat F$ 
by the following formula
($a(\partial),b(\partial)\in J_\infty R[\partial],\,u,v\in F$):
\begin{equation}\label{eq:cur-lambda-tot}
\begin{split}
& [a(\partial)\otimes_Ru _\lambda b(\partial)\otimes_Rv]^{\widehat{}}
=
\big(\big|_{x=\partial}a^*(\lambda)\big)b(\lambda+x+y)\big|_{y=\partial}\otimes_R[u,v] \\
&\quad +
\big(\big|_{x=\partial}a^*(\lambda)\big)\widehat{\theta(u)}_{\lambda+x}(b(y))\big|_{y=\partial}\otimes_Rv \\
&\quad -
b(\lambda+x+y)
\big(\big|_{x=\partial}\widehat{\theta(v)}^*_{\lambda+y}(a(y))\big)\big|_{y=\partial}\otimes_Ru
\,.
\end{split}
\end{equation}
We need to check that it is well defined 
on the tensor product space $\widehat F=J_\infty R[\partial]\otimes_R F$.
In order to prove well definiteness,
we need to check that
\begin{equation}\label{eq:pr29.3}
[a(\partial)\otimes_Rru \,_\lambda\, b(\partial)\otimes_Rsv]^{\widehat{}}
=
[a(\partial)\circ r\otimes_Ru \,_\lambda\, b(\partial)\circ s\otimes_Rv]^{\widehat{}}
\,,
\end{equation}
for $a(\partial),b(\partial)\in J_\infty R[\partial]$, $u,v\in F$ and $r,s\in R$.
By \eqref{eq:cur-lambda-tot}, the right-hand side of \eqref{eq:pr29.3} is
\begin{equation}\label{eq:pr29.4}
\begin{split}
& \big(\big|_{x=\partial}ra^*(\lambda)\big)b(\lambda+x+y+z)
\big(\big|_{z=\partial}s\big)
\big|_{y=\partial}\otimes_R[u,v] \\
& +
\big(\big|_{x=\partial}ra^*(\lambda)\big)\widehat{\theta(u)}_{\lambda+x}
(b(y+z)\big(\big|_{z=\partial}s\big))\big|_{y=\partial}\otimes_Rv \\
& -
b(\lambda+x+y+w)
\big(\big|_{w=\partial}s\big)
\big(\big|_{x=\partial}\widehat{\theta(v)}^*_{\lambda+y}\big(a(y+z)\big(\big|_{z=\partial}r\big)\big)\big)
\big|_{y=\partial}\otimes_Ru
\,.
\end{split}
\end{equation}
By the definition of tensor product, the first term of \eqref{eq:pr29.4} is
\begin{equation}\label{eq:pr29.6}
\big(\big|_{x=\partial}a^*(\lambda)\big)b(\lambda+x+y+z)
\big|_{y=\partial}\otimes_Rrs[u,v]\,.
\end{equation}
By equation \eqref{eq:pr29.5} and the definition of conformal derivation,
the second term of \eqref{eq:pr29.4} is equal to
\begin{equation}\label{eq:pr29.7}
\begin{split}
& \big(\big|_{x=\partial}a^*(\lambda)\big)\widehat{\theta(ru)}_{\lambda+x}
(b(y))\big|_{y=\partial}\otimes_R sv \\
& + \big(\big|_{x=\partial}a^*(\lambda)\big)
b(\lambda+x+y)
\big|_{y=\partial}
\otimes_R
\theta(ru)(s) v
\,.
\end{split}
\end{equation}
Similarly, we can use the identity 
$s\widehat{\theta(v)}^*_{\lambda}=\widehat{\theta(sv)}^*_{\lambda}$,
which follows easily from \eqref{eq:pr29.5},
and the definition of right conformal derivations,
to rewrite the third term of \eqref{eq:pr29.4} as
\begin{equation}\label{eq:pr29.8}
\begin{split}
& -
b(\lambda+x+y)
\big(\big|_{x=\partial}
\widehat{\theta(sv)}^*_{\lambda+y}(a(y))
\big)
\big|_{y=\partial}\otimes_R ru \\
& -
b(\lambda+x+y)
\big(\big|_{x=\partial}
a^*(\lambda)
\big)
\big|_{y=\partial}
\otimes_R \theta(sv)(r) u
\,.
\end{split}
\end{equation}
Combining \eqref{eq:pr29.6}, \eqref{eq:pr29.7} and \eqref{eq:pr29.8},
we get the  left-hand side of \eqref{eq:pr29.3}, as claimed.

Next, we check that 
the anchor map $\widehat{\theta}$ defined by \eqref{eq:cur-anchor-tot}
and the $\lambda$-bracket defined by \eqref{eq:cur-lambda-tot}
satisfy condition (i) and (i') of Definition \ref{def:LCAd}.
For condition (i), we have
\begin{equation}\label{eq:pr30.1}
\begin{split}
& [a(\partial)\otimes_Ru \,_\lambda\, c(\partial)(b(\partial)\otimes_Rv)]^{\widehat{}}
=
c(\lambda+\partial)\Big(
\big(\big|_{x=\partial}a^*(\lambda)\big)b(\lambda+x+y)\big|_{y=\partial}\otimes_R[u,v]
\Big) \\
&\quad +
\big(\big|_{x=\partial}a^*(\lambda)\big)\widehat{\theta(u)}_{\lambda+x}\big(c(y+z)\big(\big|_{z=\partial}b(y)\big)\big)\big|_{y=\partial}\otimes_Rv \\
&\quad -
c(\lambda+\partial)\Big(
b(\lambda+x+y)
\big(\big|_{x=\partial}\widehat{\theta(v)}^*_{\lambda+y}(a(y))\big)\big|_{y=\partial}\otimes_Ru
\Big)
\,.
\end{split}
\end{equation}
By the definition of conformal derivations, we have
\begin{align*}
& \widehat{\theta(u)}_{\lambda+x}\big(c(y+z)\big(\big|_{z=\partial}b(y)\big)\big)
=
\widehat{\theta(u)}_{\lambda+x}\big(c(y+z)\big)
\big(\big|_{z=\partial}b(y)\big)\big) \\
&\qquad +
c(\lambda+x+y+z)
\big(\big|_{z=\partial}
\widehat{\theta(u)}_{\lambda+x}(b(y))
\big)
\,.
\end{align*}
Hence, the right-hand side of \eqref{eq:pr30.1} becomes
$$
c(\lambda+\partial)
[a(\partial)\otimes_Ru\,_\lambda\, b(\partial)\otimes_Rv]^{\widehat{}}
+
\widehat{\theta}(a(\partial)\otimes_R u)_{\lambda}(c(y))
\big(\big|_{y=\partial}
(b(\partial)\otimes_Rv)
\big)
\,,
$$
proving condition (i).
Condition (i') is proved in a similar way.

We next observe that, since $F$ is a LAd,  the LCA axioms and condition (ii) of Definition \ref{def:LCAd}
hold for $[\cdot\,_\lambda\,\cdot]^{\widehat{}}$ and $\widehat{\theta}$ on the set $\{1\otimes_Ru\}_{u\in F}$,
which generate $\widehat F$ as a left $\widehat{A}[\partial]$-module.
Hence, by Lemma \ref{lem:gener}, $(\widehat{F},\partial,[\cdot\,_\lambda\,\cdot]^{\widehat{}},\widehat{\theta})$ 
is indeed an LCAd.

We finally prove the last assertion of the proposition.
If $R=R_k/I$, then $\widehat{R}=J_\infty R\simeq J_\infty R_k/\langle I\rangle_\partial$, so that
\begin{align*}
& \overline{\widehat{R}}
=
\widehat{R}/(\widehat{R}\partial\widehat{R})
\simeq
J_\infty R_k
\big/
\Big(
\sum_{\ell=0}^\infty (J_\infty R_k)\partial^\ell I
+
(J_\infty R_k)\partial(J_\infty R_k)\Big) \\
& \simeq
\big(J_\infty R_k\big/\big((J_\infty R_k)\partial(J_\infty R_k)\big)\big)
\big/
\Big((J_\infty R_k) I\big/\big((J_\infty R_k)\partial(J_\infty R_k)\big)\Big)
\simeq
R_k/I=R\,.
\end{align*}
Similarly, for the quotient of $\widehat{F}$, we have
\begin{align*}
& \overline{\widehat{F}}
=
\widehat{F}/(\widehat{R}\partial\widehat{F})
\simeq
(J_\infty R[\partial]\otimes_RF)
\big/
\big((J_\infty R)\partial\big(J_\infty R[\partial]\otimes_RF\big)\big) \\
& \simeq
(J_\infty R\otimes_RF)
\big/
\big(((J_\infty R)\partial(J_\infty R)) \otimes_R F\big) 
\simeq
R\otimes_RF\simeq F
\,.
\end{align*}
The induced Lie bracket and anchor map on $\overline{\widehat{F}}$ obviously correspond,
under these isomorphisms, to the Lie bracket and anchor map of $F$.
\end{proof}

\subsection{Extended LCAd}\label{sec:2.3}

Recalling that an LCA is the same as an LCAd over the base field $\mb F$ with the zero anchor map $\theta=0$,
we generalize here the construction of Example \ref{ex:transf-LCAd}.

\begin{proposition}\label{prop:extension}
Let $E$ be an LCAd over the differential algebra $A$,
let $B\supset A$ be a differential algebra extension of $A$,
and let $\phi:\,E\to\CDer(B)$ be an LCA homomorphism extending the anchor map,
in the sense that $\phi(u)\big|_{A}=\theta(u)$ for every $u\in E$.
Then we have an LCAd, 
called the $B$-\emph{extended LCAd}, defined as the space $E_B=B\otimes_A E$,
with the left $B[\partial]$-module structure given by 
($a(\partial)\in B[\partial],\,b\in B,\,u\in E$):
\begin{equation}\label{eq:ext1}
a(\partial)(b\otimes_A u)=\big(a(x+y)\big(\big|_{x=\partial}b\big)\big)\otimes_A \big(\big|_{y=\partial}u\big)
\,,
\end{equation}
the LCA $\lambda$-bracket given by
($a,b\in B,\,u,v\in E$):
\begin{equation}\label{eq:ext2}
\begin{split}
& [a\otimes_A u_\lambda b\otimes_A v]_B
=
\big(\big|_{x=\partial}a\big)b\otimes_A[u_{\lambda+x} v] \\
& +
\big(\big|_{x=\partial}a\big)\phi(u)_{\lambda+x}(b)\otimes_A v
-
b\phi(v)^*_{\lambda+x}(a)\otimes_A \big(\big|_{x=\partial}u\big)
\,,
\end{split}
\end{equation}
and the anchor map $\theta_B:\,B\otimes_A E\to\CDer(B)$ is given by ($a,b\in B,\,u\in E$):
\begin{equation}\label{eq:ext3}
\theta_B(a\otimes_A u)_\lambda(b)
=
\big(\big|_{x=\partial}a\big)\phi(u)_{\lambda+x}(b)
\,.
\end{equation}
\end{proposition}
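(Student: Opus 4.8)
The plan is to follow the proof of Proposition \ref{prop:cur}, of which the present statement is a direct generalization; indeed, taking $A=\mb F$ and $E$ an LCA recovers the transformation LCAd of Example \ref{ex:transf-LCAd}. The element $\phi(u)\in\CDer(B)$ now plays the role that $\widehat{\theta(u)}$ played there, and the bracket $[u_\lambda v]$ of the LCAd $E$ plays the role of $1\otimes_R[u,v]$. I take $\{1\otimes_A u\mid u\in E\}$ as generators of $E_B$ over $B[\partial]$. Since a conformal derivation annihilates the unit, $\phi(u)_\lambda(1)=0$, and so on these generators the formulas \eqref{eq:ext2} and \eqref{eq:ext3} reduce to $[1\otimes_A u\,_\lambda\,1\otimes_A v]_B=1\otimes_A[u_\lambda v]$ and $\theta_B(1\otimes_A u)=\phi(u)$; the full formulas \eqref{eq:ext2}--\eqref{eq:ext3} are precisely the extension of these values by the total formula \eqref{eq:tot-form}.

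The main obstacle is well-definedness on $B\otimes_A E$, i.e. that \eqref{eq:ext1}, \eqref{eq:ext2} and \eqref{eq:ext3} respect $ar\otimes_A u=a\otimes_A ru$ for $r\in A$. For the module action \eqref{eq:ext1} this is the usual extension of scalars. For the anchor and the bracket the key input is the $A[\partial]$-linearity of $\phi$ as a map into $\CDer(B)$, namely $\phi(a(\partial)u)=a(\partial)\phi(u)$ with $A\subset B$ acting by \eqref{eq:lcder-mod}; this is the analogue of the identity \eqref{eq:pr29.5} used in Proposition \ref{prop:cur}. Granting it, the verification that \eqref{eq:ext2} is balanced follows the same pattern as the computation \eqref{eq:pr29.3}--\eqref{eq:pr29.8}: one transfers $r$ and $s$ across $\otimes_A$ using $\phi(ru)=r\phi(u)$, the Leibniz rule for the conformal derivation $\phi(u)$, and the dual identity $s\,\phi(v)^*=\phi(sv)^*$, and the extra Leibniz contributions recombine so that the $[u_{\lambda+x}v]$-term is evaluated consistently, as guaranteed by conditions (i)/(i') of the LCAd $E$ (recall $\theta(v)=\phi(v)|_A$).

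Once well-definedness is established, I verify that $\theta_B$ is a homomorphism of $B[\partial]$-modules into $\CDer(B)$: each $\theta_B(a\otimes_A u)=a\cdot\phi(u)$ is again a conformal derivation of $B$ by \eqref{eq:lcder-mod}, and $B[\partial]$-linearity of $\theta_B$ is immediate on comparing \eqref{eq:ext1} with \eqref{eq:ext3}. I then check, exactly as in Proposition \ref{prop:cur} (cf. \eqref{eq:pr30.1}), that $\theta_B$ and the bracket \eqref{eq:ext2} satisfy conditions (i) and (i') of Definition \ref{def:LCAd} for all elements; this holds because \eqref{eq:ext2} is, by construction, the total-formula \eqref{eq:tot-form} extension of the bracket on generators, together with the $B[\partial]$-linearity of $\theta_B$ just established.

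It remains, by Lemma \ref{lem:gener}, to check skewsymmetry (LCA-ii), the Jacobi identity (LCA-iii) and condition (ii) of Definition \ref{def:LCAd} only on the generators $1\otimes_A u$. On these all three reduce to structure already available: since $[1\otimes_A u\,_\lambda\,1\otimes_A v]_B=1\otimes_A[u_\lambda v]$, skewsymmetry and Jacobi follow from the corresponding LCA axioms of $E$; and, since $\theta_B(1\otimes_A u)=\phi(u)$ and $\phi$ is an LCA homomorphism, we obtain $\theta_B([1\otimes_A u\,_\lambda\,1\otimes_A v]_B)=\phi([u_\lambda v])=[\phi(u)_\lambda\phi(v)]$ with the bracket \eqref{eq:lcder-lambda}, which is precisely condition (ii) on the generators. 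Lemma \ref{lem:gener} then yields that $(E_B,[\cdot\,_\lambda\,\cdot]_B,\theta_B)$ is an LCAd over $B$, as claimed.
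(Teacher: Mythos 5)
Your proposal follows essentially the same route as the paper's proof: well-definedness of the three structure maps on $B\otimes_A E$, the observation that on the generators $1\otimes_A u$ the bracket and anchor reduce to $1\otimes_A[u_\lambda v]$ and $\phi(u)$, direct verification of conditions (i) and (i'), and then Lemma \ref{lem:gener} to propagate skewsymmetry, the Jacobi identity and condition (ii) from the generators to all of $E_B$. The paper compresses all of this into three sentences, so your version --- modeled on the detailed proof of Proposition \ref{prop:cur} --- is just a fleshed-out form of the same argument.

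One remark on the step you explicitly ``grant'': the identity $\phi(a(\partial)u)=a(\partial)\phi(u)$ (with $A\subset B$ acting on $\CDer(B)$ via \eqref{eq:lcder-mod}) is not merely convenient but indispensable --- already the well-definedness of $\theta_B$ on $B\otimes_A E$ is \emph{equivalent} to $\phi(ru)=r\phi(u)$ for $r\in A$ --- and it does not follow from the stated hypotheses. From $\phi(u)|_A=\theta(u)$ and the $A[\partial]$-linearity of $\theta$ one only gets that $\phi(ru)$ and $r\phi(u)$ agree on $A$, not on all of $B$; this is unlike Proposition \ref{prop:cur}, where the analogous identity \eqref{eq:pr29.5} is forced by the uniqueness clause of Lemma \ref{lem:jet-der}, a mechanism that is absent here. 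So the $A[\partial]$-linearity of $\phi$ must be treated as an additional (implicit) hypothesis --- it does hold automatically in the motivating special cases, such as the affinization $\widetilde A=A[t^{\pm1}]$. The paper's proof passes over this point silently, so your flagging it is the right instinct; just be aware that ``granting it'' means adding it to the hypotheses rather than deriving it.
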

\begin{proof}
To prove that the $B[\partial]$-action \eqref{eq:ext1}, the $\lambda$-bracket \eqref{eq:ext2}
and the anchor map \eqref{eq:ext3} are well defined on the tensor product space $B\otimes_AE$
is a straightforward computation.
The LCAd axioms hold, by construction, on the space of elements $\{1\otimes_A u\}_{u\in E}$.
Hence, by Lemma \ref{lem:gener}, they hold on the whole $E_B$,
provided that conditions (i) and (i') hold, which can be checked directly.
\end{proof}

Let $E$ be an LCAd over $A$.
Consider the differential algebra extension $\widetilde{A}=A[t^{\pm1}]$
with derivation $\widetilde{\partial}$ given by 
$\widetilde{\partial}(at^n)=(\partial a)t^n+nat^{n-1}$,
and the LCA homomorphism $\phi:\,E\to\CDer(\widetilde A)$ given by 
$\phi(u)_\lambda(at^m)=\phi(u)_\lambda(a) t^m$.
The \emph{affinization LCAd} of $E$ is the corresponding $\widetilde{A}$-extended LCAd
$\widetilde{E}=\widetilde{A}\otimes_AE\simeq E[t^{\pm1}]$.

\subsection{Representations of LCAd}\label{sec:2.5}

\begin{definition}\label{def:module}
Let $E$ be an LCAd over the differential algebra $A$.
A \emph{module} over $E$ is a left $A[\partial]$-module $M$
endowed with a $\lambda$-\emph{action} of $E$ on $M$, i.e. a map
\begin{equation}\label{eq:action}
E\times M\to M[\lambda]
\,,\qquad (u,m)\mapsto u_\lambda m\,,
\end{equation}
satisfying the following conditions ($a(\partial)\in A[\partial],\,u,v\in E,\,m\in M$):
\begin{enumerate}[(i)]
\item
$u_\lambda(a(\partial)m) = a(\lambda+x) \big(\big|_{x=\partial}u_\lambda m\big)
+\theta(u)_\lambda(a(x))\big(\big|_{x=\partial}m\big)$;
\item
$(a(\partial)u)_\lambda m=\big(\big|_{x=\partial}a^*(\lambda)\big)(u_{\lambda+x}m)$;
\item
$[u_\lambda v]_\mu(m)=u_\lambda(v_{\mu-\lambda}m)-v_{\mu-\lambda}(u_\lambda m)$.
\end{enumerate}
\end{definition}
\begin{proposition}\label{prop:LCAd-mod}
Let $E$ be an LCAd over the differential algebra $A$,
and let $M$ be a left module over the associative algebra $A[\partial]$.
Assume that $A$ is finitely generated as a differential algebra,
and that $M$ is finitely generated as an $A[\partial]$-module.
Then, 
there is a bijective correspondence between:
\begin{enumerate}[(a)]
\item
the $\lambda$-actions of $E$ on $M$ making $M$ an LCAd-module over $E$;
\item
the LCAd homomorphisms $\rho:\,E\to\mc G(A,M)$, mapping $u\mapsto\rho(u)=(\phi(u),\theta(u))$,
with $\phi(u)\in\CEnd(M)$, where $\theta$ is the anchor map.
\end{enumerate}
This correspondence associates to an LCAd $\lambda$-action $u_\lambda m$ of $E$ on $M$,
the map $\rho=(\phi,\theta)$, where $\phi$ is given by (for $u\in E$ and $m\in M$):
\begin{equation}\label{eq:action2}
\phi(u)_\lambda(m)= u_\lambda m\,.
\end{equation}
\end{proposition}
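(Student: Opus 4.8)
The plan is to set up the correspondence explicitly in both directions and then observe that the three defining conditions of an $E$-module in Definition \ref{def:module} match, one at a time, the three conditions defining an LCAd homomorphism $\rho:E\to\mc G(A,M)$. Throughout, the finite-generation hypotheses enter only to guarantee (via Example \ref{ex:CDer-LCAd} and the discussion preceding it) that $\CEnd(M)$ and $\CDer(A)$ carry \emph{polynomial} $\lambda$-brackets, so that $\mc G(A,M)$ is genuinely an LCAd and the notion of an LCAd homomorphism into it makes sense.

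First I would treat the forward direction. Given a $\lambda$-action $u_\lambda m$ satisfying the module axioms, I set $\phi(u)_\lambda(m)=u_\lambda m$ as in \eqref{eq:action2} and put $\rho(u)=(\phi(u),\theta(u))$. There are four things to verify: (1) $\rho(u)\in\mc G(A,M)$; (2) $\rho$ is $A[\partial]$-linear; (3) $\rho$ preserves the $\lambda$-bracket; and (4) $\theta\circ\rho=\theta$. For (1), specializing module condition (i) to $a(\partial)=\partial$ forces $\phi(u)_\lambda(\partial m)=(\partial+\lambda)\phi(u)_\lambda(m)$ (the term $\theta(u)_\lambda(1)$ vanishing because conformal derivations kill the unit), so $\phi(u)\in\CEnd(M)$; and module condition (i) in full is \emph{exactly} the defining relation \eqref{eq:derM} of $\mc G(A,M)$ with $\sigma=\theta(u)$, whence $\rho(u)\in\mc G(A,M)$. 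For (2), since the $A[\partial]$-action on $\mc G(A,M)$ is component-wise \eqref{eq:gauge-Ad} and $\theta$ is already $A[\partial]$-linear (being the anchor of $E$), the only remaining identity is $\phi(a(\partial)u)=a(\partial)\phi(u)$, which unwinds to module condition (ii). For (3), again because the $\lambda$-bracket on $\mc G(A,M)$ is component-wise \eqref{eq:gauge-lambda}, the second component is handled by condition (ii) of Definition \ref{def:LCAd} for $E$ (which says $\theta$ intertwines the brackets), while the first component $\phi([u_\lambda v])=[\phi(u)_\lambda\phi(v)]$, computed via \eqref{eq:lcder-lambda}, is precisely module condition (iii). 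Finally (4) is immediate from $\theta(\phi,\sigma)=\sigma$ in \eqref{eq:gauge-anchor}.

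For the converse, I would start from an LCAd homomorphism $\rho$. The anchor-compatibility $\theta\circ\rho=\theta$ together with \eqref{eq:gauge-anchor} forces the second component of $\rho(u)$ to equal $\theta(u)$, so $\rho(u)=(\phi(u),\theta(u))$ for a uniquely determined $\phi(u)\in\CEnd(M)$; I then define $u_\lambda m:=\phi(u)_\lambda(m)$. The same three equivalences, read in reverse, recover the module axioms: the membership $\rho(u)\in\mc G(A,M)$ gives condition (i), the $A[\partial]$-linearity of $\rho$ gives condition (ii), and bracket-preservation of $\rho$ gives condition (iii). Since the two assignments $\{u_\lambda m\}\leftrightarrow\rho$ are manifestly inverse to one another (each is the identity on the underlying datum $\phi$), this establishes the claimed bijection.

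The verifications are essentially a dictionary between the module axioms and the homomorphism axioms, so no single step is genuinely hard; the main point requiring care is the observation that both the $A[\partial]$-action and the $\lambda$-bracket on $\mc G(A,M)$ are defined component-wise, which is what lets the first-component conditions decouple cleanly and be identified with the module axioms, the second-component conditions holding automatically because $\theta$ is the anchor of $E$. I would also stress that it is the polynomiality of the bracket \eqref{eq:lcder-lambda} on $\CEnd(M)$, guaranteed by the finite-generation assumption, that makes $\phi([u_\lambda v])=[\phi(u)_\lambda\phi(v)]$ a statement about honest LCA homomorphisms rather than merely maps into a space with non-polynomial brackets.
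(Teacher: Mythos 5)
Your proof is correct and follows essentially the same route as the paper: both arguments are a term-by-term dictionary identifying module axiom (i) with membership in $\mc G(A,M)$, axiom (ii) with $A[\partial]$-linearity of $\rho$, and axiom (iii) with preservation of the $\lambda$-bracket, with the converse obtained by reading the same equivalences backwards. Your additional remarks (the component-wise structure of $\mc G(A,M)$, the vanishing of $\theta(u)_\lambda(1)$, and the role of finite generation in ensuring polynomiality, cf.\ Remark \ref{rem:LCAd-mod}) are all consistent with the paper's treatment.
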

\begin{proof}
In terms of the map $\phi$ in \eqref{eq:action2}, 
condition (i) of Definition \ref{def:module} with $a(\partial)=\partial$ guarantees that $\phi(u)$ lies in $\CEnd(M)$,
while the whole condition (i) can be expressed by saying that the pair $(\phi(u),\theta(u))$ lies in $\mc G(A,M)$
i.e. satisfies \eqref{eq:derM};
condition (ii) is equivalent to say that $\phi:\,E\to\CEnd(M)$, hence $\rho=(\phi,\theta):\,E\to\mc G(A,M)$, 
is a homomorphism of $A[\partial]$-modules,
where the $A[\partial]$-module structure of $\CEnd(M)$ is given by equation \eqref{eq:lcder-mod};
moreover, condition (iii) can be expressed by saying that $\phi:\,E\to\CEnd(M)$
maps the $\lambda$-bracket of $E$ to the $\lambda$-bracket \eqref{eq:lcder-lambda} of $\CEnd(M)$. 
Hence $\rho=(\phi,\theta):\,E\to\mc G(A,M)$ maps the $\lambda$-bracket of $E$ 
to the $\lambda$-bracket \eqref{eq:gauge-lambda} of $\mc G(A,M)$. 
In particular,
in the case when $A$ is a finitely generated differential algebra 
and $M$ is a finitely generated left $A[\partial]$-module,
$(\rho,\theta)$ is a homomorphism of LCAd $(\rho,\theta):\,E\to\mc G(A,M)$.
Conversely, given an LCAd homomorphism $\rho:\,E\to\mc G(A,M)$ of the form $\rho=(\phi,\theta)$,
then, by the same arguments as above, we get an LCAd $\lambda$-action of $E$ on $M$ given by formula \eqref{eq:action2}
(read from right to left).
\end{proof}
\begin{remark}\label{rem:LCAd-mod}
If we remove the finitely generatedness assumptions on $A$ and $M$, the same statement still holds,
except that $\mc G(A,M)$ is not, strictly speaking, an LCA as the $\lambda$-bracket may have values in power series.
Hence the map $\rho$, which maps the $\lambda$-bracket of $E$ to the $\lambda$-bracket of $\mc G(A,M)$,
will not be, strictly speaking, an LCA homomorphism.
\end{remark}

\begin{example}\label{ex:trivial-rep}
Given an LCAd $E$ over the differential algebra $A$,
then $A$ is automatically a module over $E$, called the \emph{trivial module},
with the $\lambda$-action
$u_\lambda a=\theta(u)_\lambda(a)$, for $a\in A,\,u\in E$.
\end{example}
\begin{remark}
The analogue of the adjoint representation is missing for LCAd:
the $\lambda$-bracket $[u_\lambda v]$, considered as a $\lambda$-action $E\times E\to E[\lambda]$,
does not satisfy condition (ii).
The obstruction to this is the extra term 
$-\theta(v)^*_{\lambda+x}(a(x))\big(\big|_{x=\partial}u\big)$
of condition (i') in Definition \ref{def:LCAd}.
In fact, $\ker(\theta)\subset E$ is a module over the LCAd $E$.
\end{remark}
\begin{example}\label{ex:dual-mod}
If $M$ is a left $A[\partial]$-module, the dual $A[\partial]$-module $M^{\dagger}$ is defined as the space
\begin{equation}\label{eq:Mstar}
M^\dagger=\Bigg\{\phi_\lambda:\,M\to A[\lambda]
\,\Bigg|\,
\begin{array}{r}
\vphantom{\Big(}
\phi_\lambda(a(\partial)m)=a(\lambda+x)\big(\big|_{x=\partial}\phi_{\lambda}(m)\big) \\
\vphantom{\Big(} 
\text{ for } a(\partial)\in A[\partial],\,m\in M
\end{array}
\Bigg\}
\,,
\end{equation}
with the left $A[\partial]$-action given by ($a(\partial)\in A[\partial],\,m\in M$):
\begin{equation}\label{eq:Mstar1}
(a(\partial)\phi)_\lambda(m)
=
\big(\big|_{x=\partial}a^*(\lambda)\big)\phi_{\lambda+x}(m)
\,.
\end{equation}
It is immediate to check that \eqref{eq:Mstar1} is a well defined left action of $A[\partial]$
on the space $M^\dagger$ in \eqref{eq:Mstar}.
Moreover, if $M$ is a module over the LCAd $E$,
then $M^\dagger$ has a natural structure of a module over $E$, given by
\begin{equation}\label{eq:Mstar2}
(\rho^\dagger(u)_\lambda(\phi))_\mu(m)
=
\theta(u)_\lambda(\phi_{\mu-\lambda}(m))-\phi_{\mu-\lambda}(\rho(u)_\lambda(m))
\,.
\end{equation}
Indeed, it is straightforward to check that this formula gives a well defined action of $E$ on $M^\dagger$.
\end{example}
\begin{proposition}\label{prop:semi}
Let $E$ be an LCAd 
over the differential algebra $A$ and let $M$ be a module over $E$.
Then, we have an LCAd structure on $E\oplus M=\{u+m\,|\,u\in E,\,m\in M\}$, 
called the \emph{semidirect product} of $E$ and $M$,
with the left $A[\partial]$-module structure 
\begin{equation}\label{eq:semi1}
a(\partial)(u+m)=a(\partial)u+a(\partial)m
\,,
\end{equation}
the anchor map given by 
\begin{equation}\label{eq:semi2}
\theta_{E\oplus M}(u+m)=\theta(u)
\,,
\end{equation}
and the $\lambda$-bracket
\begin{equation}\label{eq:semi3}
[u+m_\lambda v+n]_{E\oplus M}
=
[u_\lambda v]
+
\big(
u_\lambda n
-
\big|_{x=\partial}v_{-\lambda-x}m
\big)
\,.
\end{equation}
\end{proposition}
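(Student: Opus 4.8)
The plan is to check the axioms of Definition \ref{def:LCAd} for the data \eqref{eq:semi1}--\eqref{eq:semi3}, exploiting that the $A[\partial]$-action, the anchor, and the $\lambda$-bracket all respect the decomposition $E\oplus M$. The module structure \eqref{eq:semi1} is the direct sum action and is evidently well defined, while $[u+m_\lambda v+n]_{E\oplus M}$ has $E$-component $[u_\lambda v]$ and $M$-component $u_\lambda n-\big(\big|_{x=\partial}v_{-\lambda-x}m\big)$. Hence every identity to be verified splits into an $E$-part, reducing to the corresponding LCAd axiom for $E$, and an $M$-part, reducing to the module axioms of Definition \ref{def:module}. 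Two facts are immediate. First, $\theta_{E\oplus M}$ is an $A[\partial]$-module homomorphism to $\CDer(A)$, being the composite of the componentwise (hence $A[\partial]$-linear) projection $E\oplus M\to E$ with $\theta$. Second, condition (ii) of Definition \ref{def:LCAd} holds verbatim, since $\theta_{E\oplus M}$ sees only the $E$-component and the $E$-component of the bracket is $[u_\lambda v]$; thus (ii) for $E\oplus M$ coincides with (ii) for $E$.

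The computational core is the verification of conditions (i) and (i') of Definition \ref{def:LCAd} for all elements of $E\oplus M$. Their $E$-parts are precisely (i) and (i') for $E$. For the $M$-part of (i) I would expand $[u+m_\lambda a(\partial)(v+n)]_{E\oplus M}$, rewrite the summand $u_\lambda(a(\partial)n)$ by module condition (i), and cancel the common terms; what remains is the formal-variable identity
\begin{equation*}
\big(\big|_{x=\partial}(a(\partial)v)_{-\lambda-x}m\big)
=
a(\lambda+x)\big(\big|_{x=\partial}\big|_{z=\partial}v_{-\lambda-z}m\big)
\,,
\end{equation*}
which follows from module condition (ii). The $M$-part of (i') is handled in the mirror-image way, now invoking module condition (i) in the form giving $v_\mu(a(\partial)m)$ together with the adjoint $\theta(v)^*$. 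I expect this step --- keeping track of the nested substitutions $\big|_{x=\partial},\big|_{z=\partial}$ and of which $\partial$ acts on which factor --- to be the main obstacle; it is routine in spirit but demands care.

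With (i) and (i') in hand, Lemma \ref{lem:gener} reduces the remaining axioms, namely skewsymmetry (LCA-ii) and the Jacobi identity (LCA-iii), to checking them on a set of $A[\partial]$-generators of $E\oplus M$; for these I take (generators of) $E$ and of $M$ separately. On such generators the bracket collapses to $[u_\lambda v]_{E\oplus M}=[u_\lambda v]\in E$, $[u_\lambda m]_{E\oplus M}=u_\lambda m\in M$, and $[m_\lambda m']_{E\oplus M}=0$. Skewsymmetry is then transparent: it is skewsymmetry of $E$ on a pair from $E$; on a mixed pair $u\in E$, $m\in M$ it amounts to the cancellation $\big|_{z=\partial}\big|_{x=\partial}u_{\lambda+z-x}m=u_\lambda m$ of the two substitutions; and it is trivial on a pair from $M$.

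Finally, for the Jacobi identity I go through the four types of generator triple. It is the Jacobi identity of $E$ on a triple from $E$, and it is trivially $0=0$ as soon as at least two entries lie in $M$. The one substantial case is a triple with two entries $u,v\in E$ and one entry $m\in M$, where, after using the collapsed brackets above, it reads
\begin{equation*}
u_\lambda(v_\mu m)-v_\mu(u_\lambda m)=[u_\lambda v]_{\lambda+\mu}m
\,,
\end{equation*}
which is exactly module condition (iii) of Definition \ref{def:module} (with free variable $\lambda+\mu$). This disposes of all cases and shows that $E\oplus M$ is an LCAd.
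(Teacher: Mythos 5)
Your proposal is correct and follows essentially the same route as the paper: the anchor-map conditions are immediate, condition (i) (and (i')) is verified by expanding the bracket and invoking module axioms (i) and (ii) of Definition \ref{def:module}, and the Jacobi identity on a mixed triple reduces to module axiom (iii). Your explicit appeal to Lemma \ref{lem:gener} to confine skewsymmetry and Jacobi to generators is a slightly more systematic packaging of what the paper dismisses as "the same computation as for the semidirect product of LCA," but it is the same argument in substance.
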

\begin{proof}
Equation \eqref{eq:semi1} obviously defines a left action of $A[\partial]$ on $E\oplus M$,
likewise equation \eqref{eq:semi2} obviously defines a homomorphism 
of left $A[\partial]$-modules $E\oplus M\to\CDer(A)$.
Moreover, the $\lambda$-bracket \eqref{eq:semi3} is obviously sesquilinear and skewsymmetric.
The Jacobi identity of the $\lambda$-bracket
follows by a straightforward computation, using the Jacobi identity of the $\lambda$-bracket of $E$
and axiom (iii) of the Definition \ref{def:module} of the $\lambda$-action of $E$ on $M$.
It is the same computation needed for the construction of the semidirect product of LCA,
so we leave it to the reader.
Next, we prove condition (i) of Definition \ref{def:LCAd}.
We have,
\begin{align*}
& [u+m_\lambda a(\partial)(v+n)]
=
[u_\lambda(a(\partial)v]
+
u_\lambda(a(\partial)n)
-
\big(\big|_{x=\partial}(a(\partial)v)_{-\lambda-x}m\big) \\
& =
a(\lambda+x)\big(\big|_{x=\partial}
[u_\lambda v]+u_\lambda n-v_{-\lambda-x}m
\big)
+
\theta(u)_\lambda(a(x))
\big(\big|_{x=\partial}v+n\big) \\
& =
a(\lambda+x)\big(\big|_{x=\partial}
[u+m_\lambda v_n]_{E\oplus M}
\big)
+
\theta_{E\oplus M}(u+m)_\lambda(a(x))\big(\big|_{x=\partial}v+n\big)
\,.
\end{align*}
Finally, condition (ii) of Definition \ref{def:LCAd} is obvious, since $\theta_{E\oplus M}$
acts trivially on $M$ and coincides with $\theta$ on $E$.
\end{proof}
\begin{example}
As a special case, we can consider the trivial module $A$ over $E$
and apply Proposition \ref{prop:semi} $n$-times to get, for every $n\geq0$,
an LCAd structure on $E\oplus A^{\oplus n}$ with the left action of $A[\partial]$ given component-wise,
the anchor map coinciding with the anchor map of $E$ and acting trivially on $A^{\oplus n}$,
and the $\lambda$-bracket
$$
\big[u+(a_i)_{i=1}^n \,_\lambda v+(b_j)_{j=1}^n\big]
=
[u_\lambda v]
+
\big(
u_\lambda b_j
\big)_{j=1}^n
-
\big(\big|_{x=\partial}v_{-\lambda-x}a_i
\big)_{i=1}^n
\,.
$$
\end{example}

\section{Lie conformal algebroid cohomology complexes}\label{sec:3}

\subsection{Cohomology complex of an LCAd}\label{sec:3.1}

Recall the definition of LCA cohomology complex in \cite[Sec.2]{DK09}.
Recalling that an LCA is the same as an LCAd over the trivial differential algebra $A=\mb F$ 
with zero anchor map, we generalize the construction in \cite{DK09} of the LCA cohomology complex
to define the LCAd cohomology complex.

First, let $E$ and $M$ be left $A[\partial]$-modules.
The space of $k$-\emph{cochains} $C^k(E,M)$, $k\geq0$,
is the space of maps 
\begin{equation}\label{eq:k-cochains}
\varphi_{\lambda_1,\dots,\lambda_k}:\,E^{\otimes k}\to M[\lambda_1,\dots,\lambda_k]/\langle\partial+\lambda_1+\dots+\lambda_k\rangle
\,,
\end{equation}
mapping $u_1\otimes\dots\otimes u_k\mapsto\varphi_{\lambda_1,\dots,\lambda_k}(u_1,\dots,u_k)$,
satisfying
\begin{equation}\label{eq:poly-lambda}
\begin{split}
& \varphi_{\lambda_1,\dots,\lambda_k}\big(a_1(\partial)u_1,\dots,a_k(\partial)u_k\big) \\
& =
\big(\big|_{x_1=\partial}a_1^*(\lambda_1)\big)
\dots
\big(\big|_{x_{k}=\partial}a_{k}^*(\lambda_{k})\big)
\varphi_{\lambda_1+x_1,\dots,\lambda_k+x_k}\big(u_1,\dots,u_k)
\,,
\end{split}
\end{equation}
and
\begin{equation}\label{eq:poly-skew}
\varphi_{\lambda_1,\dots,\lambda_k}\big(u_1,\dots,u_k)
=
\sign(\pi)
\varphi_{\lambda_{\pi(1)},\dots,\lambda_{\pi(k)}}(u_{\pi(1)},\dots,u_{\pi(k)})
\,,
\end{equation}
for every permutation $\pi\in S_k$,
where $\sign(\pi)$ is the sign of the permutation.
In particular, $C^0(E,M)\simeq M/\partial M$.
For $k\geq1$
we can identify $M[\lambda_1,\dots,\lambda_k]/\langle\partial+\lambda_1+\dots+\lambda_k\rangle$ 
with $M[\lambda_1,\dots,\lambda_{k-1}]$ 
by setting $\lambda_k=-\lambda_1-\dots-\lambda_{k-1}-\partial$,
with $\partial$ acting on coefficients.
Hence, $C^k(E,M)$ is identified with the space of $k$-$\lambda$-brackets
$E^{\otimes k}\to M[\lambda_1,\dots,\lambda_{k-1}]$,
satisfying the analogue of conditions \eqref{eq:poly-lambda} and \eqref{eq:poly-skew},
cf. \cite{DK09}.
In particular, $C^1(E,M)\simeq \Hom_{A[\partial]}(E,M)$.

Next, if $E$ is an LCAd over the differential algebra $A$
and $M$ is a module over $E$,
we make $C^\bullet(E,M)=\oplus_{k\geq0}C^k(E,M)$
into a cohomology complex, with the differential 
$d:\,C^k(E,M)\to C^{k+1}(E,M),\,k\geq0$, defined by
(cf. \cite[Eq.(15)]{DK09})
\begin{equation}\label{eq:differential}
\begin{split}
& 
(d\varphi)_{\lambda_1,\dots,\lambda_{k+1}}(u_1,\dots,u_{k+1})
=
\sum_{i=1}^{k+1}(-1)^{i+1}
{u_i}_{\lambda_i}\Big(
\varphi_{\lambda_1,\stackrel{i}{\check\dots},\lambda_{k+1}}(u_1,\stackrel{i}{\check\dots},u_{k+1})
\Big) \\
&\qquad +
\sum_{\substack{i,j=1 \\ i<j}}^{k+1}(-1)^{i+j}
\varphi_{\lambda_i+\lambda_j,\lambda_1,\stackrel{i}{\check\dots}\stackrel{j}{\check\dots},\lambda_{k+1}}
\big([{u_i}_{\lambda_i}{u_j}],u_1,\stackrel{i}{\check\dots}\stackrel{j}{\check\dots},u_{k+1}\big)
\,.
\end{split}
\end{equation}

\begin{proposition}\label{prop:LCAd-cohomology}
Formula \eqref{eq:differential} provides a well defined linear map $C^k(E,M)\to C^{k+1}(E,M)$, $k\geq0$,
and $d\circ d=0$.
The corresponding cohomology complex $(C^\bullet(E,M),d)$ is, by definition, the \emph{LCAd cohomology complex}
of $E$ with coefficients in $M$.
\end{proposition}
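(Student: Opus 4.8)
The plan is to verify the two assertions of the proposition separately: first that the right-hand side of \eqref{eq:differential} is a genuine $(k+1)$-cochain, and then that $d\circ d=0$. Throughout I would follow the template of the Chevalley--Eilenberg differential for LCA cohomology in \cite{DK09}, the novelty here being the systematic appearance of anchor terms, which have to be shown to cancel using axiom (ii) of Definition \ref{def:LCAd} and axiom (i) of Definition \ref{def:module}.

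For well-definedness I would check three things. First, that the right-hand side of \eqref{eq:differential} descends to the quotient $M[\lambda_1,\dots,\lambda_{k+1}]/\langle\partial+\lambda_1+\dots+\lambda_{k+1}\rangle$: in the first sum each $\varphi_{\dots}(\dots)$ already lives modulo $\langle\partial+\sum_{j\neq i}\lambda_j\rangle$, and applying ${u_i}_{\lambda_i}$ is compatible with this because the action is sesquilinear in its $M$-argument, i.e. ${u_i}_{\lambda_i}(\partial m)=(\partial+\lambda_i)({u_i}_{\lambda_i}m)$, which is the case $a(\partial)=\partial$ of Definition \ref{def:module}(i) (the anchor contribution vanishes since $\theta(u_i)$ kills constants); in the second sum the combined variable $\lambda_i+\lambda_j$ is accommodated by sesquilinearity (LCA-i) of the bracket. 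Second, skewsymmetry \eqref{eq:poly-skew} of $d\varphi$ is a purely combinatorial bookkeeping of signs, identical to the LCA case, using the skewsymmetry of $\varphi$ and skewsymmetry (LCA-ii) to exchange the two arguments inside the bracket. Third, the $A[\partial]$-sesquilinearity \eqref{eq:poly-lambda} of $d\varphi$.

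For this last point, since \eqref{eq:poly-lambda} factorizes over the $k+1$ slots, it suffices, by iterating and using the skewsymmetry just established, to verify it when a single argument is replaced, say $u_1\mapsto a(\partial)u_1$. Expanding $(d\varphi)(a(\partial)u_1,u_2,\dots)$, the $i=1$ term of the first sum is handled by Definition \ref{def:module}(ii), which extracts $a^*(\lambda_1)$ from $(a(\partial)u_1)_{\lambda_1}$; the terms $i\geq2$ and the bracket terms involving $[{u_1}_{\lambda_1}{u_j}]$ produce, via Definition \ref{def:module}(i) and the total formula \eqref{eq:tot-form} applied to $[a(\partial)u_1\,{}_{\lambda_1}u_j]$, both a ``main'' piece reproducing the desired right-hand side and a collection of extra anchor pieces containing $\theta(u_1)_{\lambda_1}(a)$. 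The crux is that these anchor pieces cancel in pairs between the first and the second sum; this is exactly the cochain-level analogue of the anchor cancellations already exploited in the proof of Lemma \ref{lem:gener}.

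Finally, for $d\circ d=0$ I would expand $(d(d\varphi))_{\lambda_1,\dots,\lambda_{k+2}}$ and sort the resulting terms into three families: action--action terms ${u_i}_{\lambda_i}({u_j}_{\lambda_j}(\varphi(\dots)))$, mixed action--bracket terms, and bracket--bracket terms of the form $\varphi_{\dots}([[{u_i}_{\lambda_i}{u_j}]_{\lambda_i+\lambda_j}{u_\ell}],\dots)$. The action--action terms cancel in antisymmetric pairs by Definition \ref{def:module}(iii); the bracket--bracket terms cancel by the Jacobi identity (LCA-iii) of the $\lambda$-bracket of $E$; and the mixed terms cancel using Definition \ref{def:module}(i) together with sesquilinearity. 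This is the standard Chevalley--Eilenberg cancellation, and the anchor enters only through the module axioms, which were designed so that it passes through. \textbf{The main obstacle} I anticipate is not conceptual but organizational: controlling the signs $(-1)^{i+j}$ and the shifted spectral variables across the index deletions $\stackrel{i}{\check\dots}\stackrel{j}{\check\dots}$, and confirming that the new anchor terms appearing in the verification of \eqref{eq:poly-lambda} and in the action--bracket family cancel \emph{exactly}, rather than merely modulo the relation $\langle\partial+\lambda_1+\dots\rangle$.
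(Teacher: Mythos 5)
Your proposal follows essentially the same route as the paper: you first check that \eqref{eq:differential} descends to the quotient by $\langle\partial+\lambda_1+\dots+\lambda_{k+1}\rangle$, then verify \eqref{eq:poly-lambda} via the module axioms and the total formula \eqref{eq:tot-form} with the anchor terms cancelling between the two sums (this is exactly where the paper's computation, equations \eqref{eq:pr31.2}--\eqref{eq:pr31.3}, does the work, using that $\theta(u_i)$ is a conformal derivation), then do the sign bookkeeping for \eqref{eq:poly-skew}, and finally reduce $d\circ d=0$ to the standard LCA computation, which is precisely what the paper does by citing \cite{BKV} and \cite{DK09}. Your reduction of \eqref{eq:poly-lambda} to a single slot via the already-established skewsymmetry is a harmless variant of the paper's simultaneous treatment of all $k+1$ slots.
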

\begin{proof}
First, note that if 
$\varphi_{\lambda_1,\dots,\lambda_k}\big(u_1,\dots,u_k)
=(\partial+\lambda_1+\dots+\lambda_k)m_{\lambda_1,\dots,\lambda_k}\big(u_1,\dots,u_k)
\equiv 0$ mod $\langle\partial+\lambda_1+\dots+\lambda_k\rangle$,
then the right-hand side of \eqref{eq:differential} is equal to
\begin{align*}
& (\partial+\lambda_1+\dots+\lambda_{k+1})
\Big(
\sum_{i=1}^{k+1}(-1)^{i+1}
{u_i}_{\lambda_i}\Big(
m_{\lambda_1,\stackrel{i}{\check\dots},\lambda_{k+1}}(u_1,\stackrel{i}{\check\dots},u_{k+1})
\Big) \\
&\quad +
\sum_{\substack{i,j=1 \\ i<j}}^{k+1}(-1)^{i+j}
m_{\lambda_i+\lambda_j,\lambda_1,\stackrel{i}{\check\dots}\stackrel{j}{\check\dots},\lambda_{k+1}}
\big([{u_i}_{\lambda_i}{u_j}],u_1,\stackrel{i}{\check\dots}\stackrel{j}{\check\dots},u_{k+1}\big)
\Big)
\,,
\end{align*}
which vanishes in the quotient 
$M[\lambda_1,\dots,\lambda_{k+1}]/\langle\partial+\lambda_1+\dots+\lambda_{k+1}\rangle$.
Hence, the map $d$ factors through the quotient and it is well defined on elements of $C^k(E,M)$.

Next, we show that $d\varphi$, defined by \eqref{eq:differential}, is an element of $C^k(E,M)$,
i.e. it satisfies conditions \eqref{eq:poly-lambda} and \eqref{eq:poly-skew}.
For condition \eqref{eq:poly-lambda} we have
\begin{equation}\label{eq:pr31.1}
\begin{split}
&  (d\varphi)_{\lambda_1,\dots,\lambda_{k+1}}(a_1(\partial)u_1,\dots,a_{k+1}(\partial)u_{k+1}) \\
& =
\sum_{i=1}^{k+1}(-1)^{i+1}
{(a_i(\partial)u_i)}_{\lambda_i}\Big(
\varphi_{\lambda_1,\stackrel{i}{\check\dots},\lambda_{k+1}}
(a_1(\partial)u_1,\stackrel{i}{\check\dots},a_{k+1}(\partial)u_{k+1})
\Big) \\
&+
\sum_{\substack{i,j=1 \\ i<j}}^{k+1}(-1)^{i+j}
\varphi_{\lambda_i+\lambda_j,\lambda_1,\stackrel{i}{\check\dots}\stackrel{j}{\check\dots},\lambda_{k+1}}
\!\!\!\!\!\!\!\!\!\!
\big([{(a_i(\partial)u_i)}_{\lambda_i}{(a_j(\partial)u_j)}],
a_1(\partial)u_1,\stackrel{i}{\check\dots}\stackrel{j}{\check\dots},a_{k+1}(\partial)u_{k+1}\big)
\,.
\end{split}
\end{equation}
By conditions (i) and (ii) of Definition \ref{def:module},
the first term in the right-hand side of \eqref{eq:pr31.1} is
\begin{equation}\label{eq:pr31.2}
\begin{split}
& \sum_{i=1}^{k+1}(-1)^{i+1}
\big(\big|_{x_i=\partial}a_i^*(\lambda_i)\big)
{u_i}_{\lambda_i+x_i}\Big(
\big(\big|_{x_1=\partial}a_1^*(\lambda_1)\big)
\dots \\
&\qquad
\stackrel{i}{\check\dots}
\big(\big|_{x_{k+1}=\partial}a_{k+1}^*(\lambda_{k+1})\big)
\varphi_{\lambda_1+x_1,\stackrel{i}{\check\dots},\lambda_{k+1}+x_{k+1}}
(u_1,\stackrel{i}{\check\dots},u_{k+1})
\Big) \\
& =
\sum_{i=1}^{k+1}(-1)^{i+1}
\big(\big|_{x_1=\partial}a_1^*(\lambda_1)\big)
\dots
\big(\big|_{x_{k+1}=\partial}a_{k+1}^*(\lambda_{k+1})\big) \\
&\qquad\cdot
{u_i}_{\lambda_i+x_i}\Big(
\varphi_{\lambda_1+x_1,\stackrel{i}{\check\dots},\lambda_{k+1}+x_{k+1}}
(u_1,\stackrel{i}{\check\dots},u_{k+1})
\Big) \\
& +
\sum_{i=1}^{k+1}(-1)^{i+1}
\big(\big|_{x_i=\partial}a_i^*(\lambda_i)\big)
{\theta(u_i)}_{\lambda_i+x_i}\Big(
\big(\big|_{x_1=\partial}a_1^*(\lambda_1)\big) 
\dots \\
&\qquad
\stackrel{i}{\check\dots}
\big(\big|_{x_{k+1}=\partial}a_{k+1}^*(\lambda_{k+1})\big)
\Big)
\varphi_{\lambda_1+x_1,\stackrel{i}{\check\dots},\lambda_{k+1}+x_{k+1}}
(u_1,\stackrel{i}{\check\dots},u_{k+1})
\,.
\end{split}
\end{equation}
By the total formula \eqref{eq:tot-form} we have
\begin{align*}
& [{(a_i(\partial)u_i)}_{\lambda_i}{(a_j(\partial)u_j)}]
=
\big(\big|_{x_i=\partial}a_i^*(\lambda_i)\big)
a_j(\lambda_i+x_i+y)
\big(\big|_{y=\partial}[{u_i}_{\lambda_i+x_i}u_j]\big)
 \\
&\qquad +
\big(\big|_{x_i=\partial}a_i^*(\lambda_i)\big)
\theta(u_i)_{\lambda_i+x_i}(a_j(y)) \big(\big|_{y=\partial}u_j\big)  \\
&\qquad - 
a_j(\lambda_i+y+z) 
\big(\big|_{y=\partial}\theta(u_j)^*_{\lambda_i+z}(a_i(z))\big)
\big(\big|_{z=\partial}u_i\big)
\,.
\end{align*}
Hence, the second term in the right-hand side of \eqref{eq:pr31.1} becomes,
after some manipulations involving properties of the anchor map $\theta$ 
and the sesquilinearity property \eqref{eq:poly-lambda}
of $\varphi$,
\begin{equation}\label{eq:pr31.3}
\begin{split}
& \sum_{\substack{i,j=1 \\ i<j}}^{k+1}(-1)^{i+j}
\big(\big|_{x_1=\partial}a_1^*(\lambda_1)\big)
\dots
\big(\big|_{x_{k+1}=\partial}a_{k+1}^*(\lambda_{k+1})\big) \\
&\qquad\qquad
\varphi_{\lambda_i+\lambda_j+x_i+x_j,\lambda_1,\stackrel{i}{\check\dots}\stackrel{j}{\check\dots},\lambda_{k+1}}
\big(
[{u_i}_{\lambda_i+x_i}u_j],
u_1,\stackrel{i}{\check\dots}\stackrel{j}{\check\dots},u_{k+1}\big) \\
& +
\sum_{\substack{i,j=1 \\ i<j}}^{k+1}(-1)^{i+j}
\big(\big|_{x_1=\partial}a_1^*(\lambda_1)\big)
\stackrel{j}{\check\dots}
\big(\big|_{x_{k+1}=\partial}a_{k+1}^*(\lambda_{k+1})\big)
\theta(u_i)_{\lambda_i+x_i}\big(\big|_{x_j=\partial}a_j^*(\lambda_j)\big) \\
&\qquad\qquad
\varphi_{\lambda_j+x_j,\lambda_1+x_1,\stackrel{i}{\check\dots}\stackrel{j}{\check\dots},\lambda_{k+1}+x_{k+1}}
\big(u_j,u_1,\stackrel{i}{\check\dots}\stackrel{j}{\check\dots},u_{k+1}\big) \\
& - 
\sum_{\substack{i,j=1 \\ i<j}}^{k+1}(-1)^{i+j}
\big(\big|_{x_1=\partial}a_1^*(\lambda_1)\big)
\stackrel{i}{\check\dots}
\big(\big|_{x_{k+1}=\partial}a_{k+1}^*(\lambda_{k+1})\big)
\theta(u_j)_{\lambda_j+x_j}\big(\big|_{x_i=\partial}a_i^*(\lambda_i)\big) \\
&\qquad\qquad
\varphi_{\lambda_i+x_i,\lambda_1+x_1,\stackrel{i}{\check\dots}\stackrel{j}{\check\dots},\lambda_{k+1}+x_{k+1}}
\big(u_i,u_1,\stackrel{i}{\check\dots}\stackrel{j}{\check\dots},u_{k+1}\big)
\,.
\end{split}
\end{equation}
Combining the first term in the right-hand side of \eqref{eq:pr31.2} with the first term in \eqref{eq:pr31.3}
we get the desired expression
$$
\big(\big|_{x_i=\partial}a_i^*(\lambda_i)\big)
a_j(\lambda_i+x_i+y)
\big(\big|_{y=\partial}[{u_i}_{\lambda_i+x_i}u_j]\big)
(d\varphi)_{\lambda_1,\dots,\lambda_{k+1}}(u_1,\dots,u_{k+1})
\,.
$$
On the other hand, by the skewsymmetry condition \eqref{eq:poly-skew},
the second and third terms of \eqref{eq:pr31.3} combine to give
\begin{align*}
& \sum_{\substack{i,j=1 \\ i\neq j}}^{k+1}(-1)^{i}
\big(\big|_{x_1=\partial}a_1^*(\lambda_1)\big)
\stackrel{j}{\check\dots}
\big(\big|_{x_{k+1}=\partial}a_{k+1}^*(\lambda_{k+1})\big)
\theta(u_i)_{\lambda_i+x_i}\big(\big|_{x_j=\partial}a_j^*(\lambda_j)\big) \\
&\qquad\qquad
\varphi_{\lambda_1+x_1,\stackrel{i}{\check\dots},\lambda_{k+1}+x_{k+1}}
\big(u_1,\stackrel{i}{\check\dots},u_{k+1}\big)
\,,
\end{align*}
which is opposite to the second term in the right-hand side of \eqref{eq:pr31.2}
since $\theta(u_i)_{\lambda_i+x_i}$ is a conformal derivation of $A$.

As for the skewsymmetry condition \eqref{eq:poly-skew}, we have
\begin{equation}\label{eq:pr31.4}
\begin{split}
& 
(d\varphi)_{\lambda_{\pi(1)},\dots,\lambda_{\pi(k+1)}}(u_{\pi(1)},\dots,u_{\pi(k+1)}) \\
& =
\sum_{i=1}^{k+1}(-1)^{i+1}
{u_{\pi(i)}}_{\lambda_{\pi(i)}}\Big(
\varphi_{\lambda_{\pi(1)},\stackrel{i}{\check\dots},\lambda_{\pi(k+1)}}
(u_{\pi(1)},\stackrel{i}{\check\dots},u_{\pi(k+1)})
\Big) \\
& +
\sum_{\substack{i,j=1 \\ i<j}}^{k+1}(-1)^{i+j}
\varphi_{\lambda_{\pi(i)}+\lambda_{\pi(j)},
\lambda_{\pi(1)},\stackrel{i}{\check\dots}\stackrel{j}{\check\dots},\lambda_{\pi(k+1)}}
\big([{u_{\pi(i)}}_{\lambda_{\pi(i)}}{u_{\pi(j)}}],
u_{\pi(1)},\stackrel{i}{\check\dots}\stackrel{j}{\check\dots},u_{\pi(k+1)}\big)
\,.
\end{split}
\end{equation}
We then observe that, for $1\leq i\leq k+1$, $\{\pi(1),\stackrel{i}{\check\dots},\pi(k+1)\}$
is a permutation of $\{1,\stackrel{\pi(i)}{\check\dots},k+1\}$ of sign $(-1)^{i+\pi(i)}\sign(\pi)$.
Hence, by the skewsymmetry assumption on $\varphi$, the first term of \eqref{eq:pr31.4} is
(setting $\alpha=\pi(i)$)
\begin{equation}\label{eq:pr31.5}
\sign(\pi)\sum_{\alpha=1}^{k+1}(-1)^{\alpha+1}
{u_{\alpha}}_{\lambda_{\alpha}}\Big(
\varphi_{\lambda_{1},\stackrel{\alpha}{\check\dots},\lambda_{k+1}}
(u_{1},\stackrel{\alpha}{\check\dots},u_{k+1})
\Big) 
\,.
\end{equation}
Likewise, $\{\pi(1),\stackrel{i}{\check\dots}\stackrel{j}{\check\dots},\pi(k+1)\}$
is a permutation of $\{1,\stackrel{\pi(i)}{\check\dots}\stackrel{\pi(j)}{\check\dots},k+1\}$
and its sign is $(-1)^{i+j+\pi(i)+\pi(j)}\sign(\pi)$ if $\pi(i)<\pi(j)$,
and it is $(-1)^{i+j+\pi(i)+\pi(j)+1}\sign(\pi)$ if $\pi(i)>\pi(j)$.
Hence, the second term of \eqref{eq:pr31.4} is
\begin{align*}
&\sign(\pi) 
\sum_{\substack{\alpha,\beta=1 \\ \alpha<\beta,\, \pi^{-1}(\alpha)<\pi^{-1}(\beta)}}^{k+1}(-1)^{\alpha+\beta}
\varphi_{\lambda_{\alpha}+\lambda_{\beta},
\lambda_{1},\stackrel{\alpha}{\check\dots}\stackrel{\beta}{\check\dots},\lambda_{k+1}}
\big([{u_{\alpha}}_{\lambda_{\alpha}}{u_{\beta}}],
u_{1},\stackrel{\alpha}{\check\dots}\stackrel{\beta}{\check\dots},u_{k+1}\big) \\
& +\sign(\pi)
\sum_{\substack{\alpha,\beta=1 \\ \alpha<\beta,\pi^{-1}(\beta)<\pi^{-1}(\alpha)}}^{k+1}(-1)^{\alpha+\beta+1}
\varphi_{\lambda_{\alpha}+\lambda_{\beta},
\lambda_{1},\stackrel{\alpha}{\check\dots}\stackrel{\beta}{\check\dots},\lambda_{k+1}}
\big([{u_{\beta}}_{\lambda_{\beta}}{u_{\alpha}}],
u_{1},\stackrel{\alpha}{\check\dots}\stackrel{\beta}{\check\dots},u_{k+1}\big)
\,.
\end{align*}
(We called $\alpha=\pi(i),\,\beta=\pi(j)$ in the first sum, and $\alpha=\pi(j),\,\beta=\pi(i)$ in the second sum.)
We then use the skewsymmetry of the $\lambda$-bracket
and the sesquilinearity condition \eqref{eq:poly-lambda} of $\varphi$, to get
\begin{equation}\label{eq:pr31.6}
\sign(\pi)\sum_{\substack{\alpha,\beta=1 \\ \alpha<\beta}}^{k+1}(-1)^{\alpha+\beta}
\varphi_{\lambda_{\alpha}+\lambda_{\beta},
\lambda_{1},\stackrel{\alpha}{\check\dots}\stackrel{\beta}{\check\dots},\lambda_{k+1}}
\big([{u_{\alpha}}_{\lambda_{\alpha}}{u_{\beta}}],
u_{1},\stackrel{\alpha}{\check\dots}\stackrel{\beta}{\check\dots},u_{k+1}\big)
\,.
\end{equation}
Combining \eqref{eq:pr31.5} and \eqref{eq:pr31.6}, we get
$$
\sign(\pi)(d\varphi)_{\lambda_1,\dots,k+1}(u_1,\dots,u_{k+1})
\,,
$$
as desired.

Hence, $d$ is a well defined map $C^k(E,M)\to C^{k+1}(E,M)$,
and we are left to prove that $d\circ d=0$.
In fact, the formula for the differential is the same as for LCA cohomology,
and the proof that $d\circ d=0$ is the same, see e.g. \cite[Lem.2.1]{BKV} or \cite[Prop.1]{DK09}.
\end{proof}

\begin{definition}\label{def:LCAd-cohomology}
The \emph{LCAd cohomology} of the LCAd $E$ with coefficients in the $E$-module $M$
is the cohomology of the complex $(C^\bullet(E,M),d)$, i.e.
$$
H^k(E,M)
=
\ker\big(d\big|_{C^k(E,M)}\big)\big/ \im\big(d\big|_{C^{k-1}(E,M)}\big)
\,,\quad k\geq0
\,.
$$
\end{definition}

\subsection{Basic and reduced complexes}\label{sec:3.2}

If we do not quotient $M[\lambda_1,\dots,\lambda_k]$ by $\langle\partial+\lambda_1+\dots+\lambda_k\rangle$
in the definition \eqref{eq:k-cochains} of $k$-cochains,
we obtain the so-called \emph{basic} cohomology complex associated to an LCAd $E$
and its module $M$.

First, if $E$ and $M$ are left $A[\partial]$-modules,
the space of \emph{basic} $k$-\emph{cochains} $\widetilde{C}^k(E,M)$, $k\geq0$,
is the space of maps 
$\widetilde{\varphi}_{\lambda_1,\dots,\lambda_k}:\,E^{\otimes k}\to M[\lambda_1,\dots,\lambda_k]$
satisfying the same equations \eqref{eq:poly-lambda} and \eqref{eq:poly-skew}
with $\widetilde\varphi$ in place of $\varphi$,
except that now all these identities are understood in $M[\lambda_1,\dots,\lambda_k]$
(and not in its quotient by $\langle\partial+\lambda_1+\dots+\lambda_k\rangle$).
In particular, $\widetilde{C}^0(E,M)\simeq M$
and $\widetilde{C}^1(E,M)=\RCHom_A(E,M)$.

Next, if $E$ is an LCAd over the differential algebra $A$
and $M$ is a module over $E$,
we make $\widetilde{C}^\bullet(E,M)=\oplus_{k\geq0}\widetilde{C}^k(E,M)$
into a cohomology complex, with the differential 
$\widetilde{d}:\,\widetilde{C}^k(E,M)\to \widetilde{C}^{k+1}(E,M),\,k\geq0$, defined by
the same formula as in \eqref{eq:differential} 
with $\widetilde\varphi$ in place of $\varphi$ and $\widetilde d$ in place of $d$
(where now both sides are understood in $M[\lambda_1,\dots,\lambda_{k+1}]$).

\begin{proposition}\label{prop:basic-LCAd-cohomology}
Formula \eqref{eq:differential} 
with $\widetilde\varphi$ in place of $\varphi$ and $\widetilde d$ in place of $d$
provides a well defined linear map 
$\widetilde{C}^k(E,M)\to\widetilde C^{k+1}(E,M)$, $k\geq0$,
and $\widetilde d\circ\widetilde d=0$.
The corresponding cohomology complex $(\widetilde C^\bullet(E,M),\widetilde d)$ is, 
by definition, the \emph{basic LCAd cohomology complex}
of $E$ with coefficients in $M$.
\end{proposition}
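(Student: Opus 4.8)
The plan is to observe that this proposition is a near-verbatim repetition of Proposition \ref{prop:LCAd-cohomology}, with the single simplification that no quotient is taken. The proof of Proposition \ref{prop:LCAd-cohomology} splits into three distinct tasks: (1) showing that $d$ descends to the quotient by $\langle\partial+\lambda_1+\dots+\lambda_k\rangle$; (2) verifying that $d\varphi$ satisfies the sesquilinearity \eqref{eq:poly-lambda} and skewsymmetry \eqref{eq:poly-skew} conditions; and (3) proving $d\circ d=0$. For the basic complex, task (1) disappears entirely, since we work directly in $M[\lambda_1,\dots,\lambda_k]$ and there is nothing to quotient by. So the plan is to re-use tasks (2) and (3) as they stand.

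For task (2), I would simply re-run the computations \eqref{eq:pr31.1}--\eqref{eq:pr31.6} from the proof of Proposition \ref{prop:LCAd-cohomology}, now interpreted as identities in the polynomial module $M[\lambda_1,\dots,\lambda_{k+1}]$ rather than in its quotient. The only thing requiring attention is to confirm that none of those manipulations ever invoked the relation $\partial+\lambda_1+\dots+\lambda_k=0$. Inspecting them, they rely only on conditions (i) and (ii) of Definition \ref{def:module} for the $\lambda$-action, the total formula \eqref{eq:tot-form}, the properties \eqref{eq:poly-lambda}--\eqref{eq:poly-skew} of $\widetilde\varphi$ itself, and the fact that each $\theta(u_i)_{\lambda_i+x_i}$ is a conformal derivation of $A$. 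All of these hold before passing to any quotient, so the computations carry over unchanged and show that $\widetilde d\widetilde\varphi\in\widetilde C^{k+1}(E,M)$. For task (3), the identity $\widetilde d\circ\widetilde d=0$ is a purely formal consequence of the LCA axioms of $E$ and the module axioms of $M$, proved by the identical computation used in the LCA case, as in \cite[Lem.2.1]{BKV} or \cite[Prop.1]{DK09}; since that computation is a polynomial identity in the $\lambda_i$, it holds verbatim in $M[\lambda_1,\dots,\lambda_{k+1}]$.

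I do not anticipate a genuine obstacle here: the whole content is the bookkeeping observation that the proof of Proposition \ref{prop:LCAd-cohomology} factors cleanly into a quotient-dependent part (task (1)) and quotient-independent parts (tasks (2)--(3)), and only the latter are needed. The one mild subtlety to check is that the factoring-through-the-quotient step is the \emph{unique} place where the relation $\partial+\lambda_1+\dots+\lambda_k=0$ entered, so that removing it leaves no gap; once this is confirmed, the statement follows. If anything, the basic version is strictly easier than Proposition \ref{prop:LCAd-cohomology}.
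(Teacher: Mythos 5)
Your proposal is correct and takes essentially the same route as the paper, whose proof of this proposition is simply the observation that the argument of Proposition \ref{prop:LCAd-cohomology} carries over verbatim; your additional check that the quotient relation $\partial+\lambda_1+\dots+\lambda_k=0$ enters only in the (now unnecessary) well-definedness step, and nowhere in the verification of \eqref{eq:poly-lambda}, \eqref{eq:poly-skew}, or $\widetilde d\circ\widetilde d=0$, is exactly the point being implicitly invoked.
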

\begin{proof}
The proof is the same as for the proof of Proposition \ref{prop:LCAd-cohomology}.
\end{proof}

\begin{definition}\label{def:LCAd-cohomology-basic}
The \emph{basic LCAd cohomology} of the LCAd $E$ with coefficients in the $E$-module $M$
is the cohomology of the complex $(\widetilde{C}^\bullet(E,M),\widetilde d)$, i.e.
$$
\widetilde{H}^k(E,M)
=
\ker\big(\widetilde{d}\big|_{\widetilde{C}^k(E,M)}\big)\big/ \im\big(\widetilde{d}\big|_{\widetilde{C}^{k-1}(E,M)}\big)
\,,\quad k\geq0
\,.
$$
\end{definition}

\begin{proposition}\label{prop:reduced-LCAd-cohomology}
We have an action of $\mb F[\partial]$ on the spaces $\widetilde{C}^k(E,M)$, $k\geq0$,
given by
\begin{equation}\label{eq:partial-basic}
(\partial\widetilde\varphi)_{\lambda_1,\dots,\lambda_k}(u_1,\dots,u_k)
=
(\lambda_1+\dots+\lambda_k+\partial)
\widetilde\varphi_{\lambda_1,\dots,\lambda_k}(u_1,\dots,u_k)
\,,
\end{equation}
which commutes with the action of the differential:
$\widetilde d\circ\partial=\partial\circ\widetilde d$.
Hence, quotienting by the action of $\partial$ we get the induced \emph{reduced LCAd cohomology complex},
$\big(\overline{C}^\bullet(E,M), \overline{d}\big)$, where
\begin{equation}\label{eq:reduced-complex}
\overline{C}^k(E,M):=\widetilde{C}^k(E,M)/\partial\widetilde{C}^k(E,M),\,k\geq0
\,,
\end{equation}
and $\overline d$ is induced by $d$.

We have a natural morphism of complexes 
$p_*:\,(\widetilde{C}^\bullet(E,M),\widetilde{d})\to(C^\bullet(E,M),d)$,
defined by 
$p_*(\widetilde\varphi)=p\circ\widetilde{\varphi}$, where 
$p:\,M[\lambda_1,\dots,\lambda_k]\twoheadrightarrow 
M[\lambda_1,\dots,\lambda_k]/\langle\partial+\lambda_1+\dots+\lambda_k\rangle$
is the canonical quotient map,
and $\partial C^\bullet(E,M)$ is the kernel of this morphism.
Hence, we get an induced injective morphism of complexes
(still denoted by $p_*$ with an abuse of notation)
$$
p_*:\,(\overline{C}^\bullet(E,M),\overline{d})\to(C^\bullet(E,M),d)\,,
$$
which is surjective provided that $E$ is free as a left $A[\partial]$-module.
\end{proposition}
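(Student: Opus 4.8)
The plan is to verify the assertions in the order stated: that \eqref{eq:partial-basic} defines an $\mb F[\partial]$-action, that $\widetilde d$ commutes with it, and then to analyze the chain map $p_*$, compute its kernel, and finally establish surjectivity under the freeness hypothesis. First I would check that, for $\widetilde\varphi\in\widetilde C^k(E,M)$, the map $\partial\widetilde\varphi$ again lies in $\widetilde C^k(E,M)$. Skewsymmetry \eqref{eq:poly-skew} is immediate, since the scalar factor $\lambda_1+\cdots+\lambda_k+\partial$ is invariant under permuting the $\lambda_i$ and hence commutes with the $S_k$-action. For sesquilinearity \eqref{eq:poly-lambda} the point is that the operator ``multiply by $\lambda_1+\cdots+\lambda_k+\partial$'' (with $\partial$ acting on the $M$-coefficients) is the restriction to the subalgebra $\mb F[\partial]\subset A[\partial]$ of the natural $A[\partial]$-module structure on right conformal maps; for $k=1$ this is literally \eqref{eq:rcder-mod} with $a(\partial)=\partial$, under the identification $\widetilde C^1(E,M)=\RCHom_A(E,M)$. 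Concretely, this operator commutes with each substitution $\big(\big|_{x_i=\partial}a_i^*(\lambda_i)\big)$ in \eqref{eq:poly-lambda} after the shift $\lambda_i\mapsto\lambda_i+x_i$, which is a one-line check in each variable.

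The heart of the proposition is the identity $\widetilde d\circ\partial=\partial\circ\widetilde d$, and I would prove the sharper statement that both sides equal $(\lambda_1+\cdots+\lambda_{k+1}+\partial)\,\widetilde d\widetilde\varphi$, term by term in \eqref{eq:differential}. For a first-type term, writing $\Lambda_i=\sum_{l\neq i}\lambda_l$, the cochain $\partial\widetilde\varphi$ evaluated on the $k$ remaining arguments contributes the factor $\Lambda_i+\partial$; pulling out the scalar $\Lambda_i$ and using sesquilinearity of the $\lambda$-action in the form $u_{\lambda}(\partial m)=(\lambda+\partial)(u_\lambda m)$ — which is Definition \ref{def:module}(i) with $a(\partial)=\partial$, the anchor term vanishing because $\theta(u)_\lambda$ annihilates the unit — one obtains ${u_i}_{\lambda_i}\big((\Lambda_i+\partial)\widetilde\varphi_{\ldots}\big)=(\Lambda_i+\lambda_i+\partial)\,{u_i}_{\lambda_i}(\widetilde\varphi_{\ldots})=(\lambda_1+\cdots+\lambda_{k+1}+\partial)\,{u_i}_{\lambda_i}(\widetilde\varphi_{\ldots})$. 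For a second-type term the slot-variables of $\widetilde\varphi$ are $\lambda_i+\lambda_j$ together with the remaining $\lambda_l$, so the weight produced by $\partial\widetilde\varphi$ is $(\lambda_i+\lambda_j)+\sum_{l\neq i,j}\lambda_l+\partial=\lambda_1+\cdots+\lambda_{k+1}+\partial$, independent of the $\lambda_i$-dependence carried by the argument $[{u_i}_{\lambda_i}u_j]$. Summing the terms yields $\widetilde d(\partial\widetilde\varphi)=(\lambda_1+\cdots+\lambda_{k+1}+\partial)\,\widetilde d\widetilde\varphi=\partial(\widetilde d\widetilde\varphi)$. Since $\partial$ preserves $\widetilde C^\bullet$ and commutes with $\widetilde d$, the image $\partial\widetilde C^\bullet$ is a subcomplex and the quotient \eqref{eq:reduced-complex} with induced differential $\overline d$ is the reduced complex.

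Next I would treat $p_*$ and its kernel. That $p_*$ lands in $C^\bullet$ is clear, since the projection $p$ preserves \eqref{eq:poly-lambda} and \eqref{eq:poly-skew}; that it is a chain map, $d\circ p_*=p_*\circ\widetilde d$, is exactly the content of the first part of the proof of Proposition \ref{prop:LCAd-cohomology}, where $d$ is shown to be induced from $\widetilde d$ by $p$. For the kernel, $p_*\widetilde\varphi=0$ means every value $\widetilde\varphi_{\lambda_1,\ldots,\lambda_k}(u_1,\ldots,u_k)$ lies in $\langle\partial+\lambda_1+\cdots+\lambda_k\rangle$, i.e. equals $(\partial+\lambda_1+\cdots+\lambda_k)\psi_{\lambda_1,\ldots,\lambda_k}(u_1,\ldots,u_k)$ for a \emph{unique} $\psi$, uniqueness because multiplication by $\partial+\sum_i\lambda_i$ strictly raises the total $\lambda$-degree and is therefore injective on $M[\lambda_1,\ldots,\lambda_k]$. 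Since this multiplication intertwines the cochain conditions (first paragraph) and is injective, $\psi$ again satisfies \eqref{eq:poly-lambda} and \eqref{eq:poly-skew}, so $\psi\in\widetilde C^k$ and $\widetilde\varphi=\partial\psi\in\partial\widetilde C^k$; the reverse inclusion is obvious. Hence $\ker p_*=\partial\widetilde C^\bullet$ and $p_*$ descends to an injective chain map $\overline C^\bullet\hookrightarrow C^\bullet$.

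The delicate point, which I expect to be the main obstacle, is surjectivity when $E$ is free over $A[\partial]$: one must lift an arbitrary $\varphi\in C^k(E,M)$ to a basic cochain. Fixing a totally ordered $A[\partial]$-basis $\{e_\alpha\}$ of $E$, sesquilinearity \eqref{eq:poly-lambda} shows that a basic cochain is freely determined by its values on tuples of basis vectors; I would lift the quotient-values $\varphi_{\lambda_1,\ldots,\lambda_k}(e_{\alpha_1},\ldots,e_{\alpha_k})$ along a section of $p$ (for instance the identification with $M[\lambda_1,\ldots,\lambda_{k-1}]$ given by $\lambda_k=-\partial-\lambda_1-\cdots-\lambda_{k-1}$) and extend by \eqref{eq:poly-lambda}. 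The obstacle is to make the lift satisfy \eqref{eq:poly-skew}, including the constraints it imposes when basis indices repeat (antisymmetry under swapping the corresponding $\lambda$'s); since $p$ is $S_k$-equivariant and $\partial+\sum_i\lambda_i$ is symmetric, I would cure this by antisymmetrizing the chosen lift, which is harmless in characteristic zero and does not alter its image under $p$. Freeness is precisely what allows the basis-tuple values to be prescribed independently, so that this construction yields a genuine element of $\widetilde C^k$ mapping to $\varphi$; this is the direct analogue of the corresponding surjectivity statement for the LCA complexes of \cite{BKV,DK09}, while the earlier steps are either immediate or reduce to the clean weight computation above.
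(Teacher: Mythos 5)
Your proof is correct, and apart from one step it follows the same route as the paper: the weight computation showing $\widetilde d(\partial\widetilde\varphi)=(\lambda_1+\cdots+\lambda_{k+1}+\partial)\widetilde d\widetilde\varphi$, the identification of $\ker p_*$ with $\partial\widetilde C^\bullet$ via injectivity of multiplication by $\partial+\lambda_1+\cdots+\lambda_k$, and the reduction of surjectivity to prescribing values on tuples of $A[\partial]$-basis elements all match the paper's argument. The one place where you genuinely diverge is the construction of the lift in the surjectivity step: you take an arbitrary (non-symmetric) section of $p$, such as the substitution $\lambda_k=-\partial-\lambda_1-\cdots-\lambda_{k-1}$, and then antisymmetrize over $S_k$, using characteristic zero and the $S_k$-equivariance of $p$ together with the skewsymmetry of $\varphi$ to see that antisymmetrization does not change the image under $p$. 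The paper instead builds a single $S_k$-equivariant section $q$ explicitly, by the substitution $\lambda_i\mapsto\lambda_i-k^{-1}(\Lambda+x)$ with $\Lambda=\lambda_1+\cdots+\lambda_k$ and $x$ evaluated at $\partial$, which vanishes on $\langle\partial+\lambda_1+\cdots+\lambda_k\rangle$ and satisfies $p\circ q=\id$; the resulting lift is then skewsymmetric automatically, with no averaging needed. Both constructions are valid; yours is perhaps more robust (it works for any choice of section), while the paper's produces a canonical symmetric splitting in one stroke and avoids the (harmless) division by $k!$.
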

\begin{proof}
To show that $\partial\widetilde\varphi$, defined by \eqref{eq:partial-basic},
satisfies conditions \eqref{eq:poly-lambda} and \eqref{eq:poly-skew}
is a straightforward computation. Hence, \eqref{eq:partial-basic} gives a well defined 
endomorphism of $\widetilde{C}^k(E,M)$, $k\geq0$.
Also, the commutation relation $\widetilde d\circ\partial=\partial\circ\widetilde d$
is straightforward (and it is the same as for the basic LCA cohomology complex, \cite{BKV}).

Next, consider the pushforward map $p_*:\,\widetilde{C}^k(E,M)\to C^k(E,M)$
given by $p_*(\widetilde\varphi)= p\circ\widetilde\varphi$.
It clearly commutes with the corresponding differentials $\widetilde d$ and $d$,
as the formulas defining them are the same, \eqref{eq:differential}. 
Namely, $p_*\circ\widetilde d=d\circ p_*$.
Hence $p_*$ is a morphism of complexes.
We need to show that its kernel coincides with $\partial\widetilde{C}^k(E,M)$,
and it is surjective provided that $E$ is a free left $A[\partial]$-module.
Obviously, by \eqref{eq:partial-basic} 
$(\partial\widetilde\varphi)_{\lambda_1,\dots,\lambda_k}(u_1,\dots,u_k)$
lies in $\langle\partial+\lambda_1+\dots+\lambda_k\rangle$,
so that $p\circ\partial\widetilde\varphi=0$,
i.e. $\partial\widetilde\varphi$ is in the kernel of $p_*$.
Conversely, if $\widetilde\varphi$ is in the kernel of $p_*$,
we have, for every $u_1,\dots,u_k\in E$,
$$
\widetilde\varphi_{\lambda_1,\dots,\lambda_k}(u_1,\dots,u_k)
=(\partial+\lambda_1+\dots+\lambda_k)F_{\lambda_1,\dots,\lambda_k}(u_1,\dots,u_k)
\,.
$$
Notice that $\partial+\lambda_1+\dots+\lambda_k$
defines an injective endomorphism of $M[\lambda_1,\dots,\lambda_k]$.
Using this fact, we obtain that $F_{\lambda_1,\dots,\lambda_k}(u_1,\dots,u_k)$
is linear in each $u_i$, satisfies \eqref{eq:poly-lambda} and \eqref{eq:poly-skew},
since $\widetilde\varphi_{\lambda_1,\dots,\lambda_k}(u_1,\dots,u_k)$ satisfies the same properties.
Linearity in each $u_i$ and the skewsymmetry condition \eqref{eq:poly-skew}
are the same as in the case of LCA cohomology.
As for condition \eqref{eq:poly-lambda}, we have, 
from condition \eqref{eq:poly-lambda} on $\widetilde\varphi$,
\begin{align*}
& (\partial+\lambda_1+\dots+\lambda_k)F_{\lambda_1,\dots,\lambda_k}(a_1(\partial)u_1,\dots,a_k(\partial)u_k) \\
& =
\big(\big|_{x_1=\partial}a_1^*(\lambda_1)\big)
\dots
\big(\big|_{x_{k}=\partial}a_{k}^*(\lambda_{k})\big)
(\partial+\lambda_1+x_1+\dots+\lambda_k+x_k)
F_{\lambda_1+x_1,\dots,\lambda_k+x_k}\big(u_1,\dots,u_k) \\
& =
(\partial+\lambda_1+\dots+\lambda_k)
\big(\big|_{x_1=\partial}a_1^*(\lambda_1)\big)
\dots
\big(\big|_{x_{k}=\partial}a_{k}^*(\lambda_{k})\big)
F_{\lambda_1+x_1,\dots,\lambda_k+x_k}\big(u_1,\dots,u_k)
\,,
\end{align*}
from which we get that $F$ satisfies \eqref{eq:poly-lambda}.
As a consequence, $F\in\widetilde{C}^k(E,M)$, and $\widetilde\varphi=\partial F\in\partial\widetilde{C}^k(E,M)$,
as desired.

Finally, we turn to surjectivity of $p_*$.
For $k=0$ it is obvious as $\widetilde{C}^0(E,M)=M$ while $C^0(E,M)=M/\partial M$,
and $p_*$ coincides with the quotient map.
For $k\geq1$, 
consider the endomorphism $\widetilde q$ of $M[\lambda_1,\dots,\lambda_k]$ given by
$$
\widetilde q(m(\lambda_1,\dots,\lambda_k))
=
\Big(\Big|_{x=\partial}
m\big(\lambda_1-k^{-1}(\Lambda+x),\dots,\lambda_k-k^{-1}(\Lambda+x)\big)
\Big)
\,,
$$
where $\Lambda=\lambda_1+\dots+\lambda_k$.
Note that
$\widetilde q$ vanishes on $\langle\partial+\lambda_1+\dots+\lambda_k\rangle$,
hence it factors through a map
$$
q:\,M[\lambda_1,\dots,\lambda_k]/\langle\partial+\lambda_1+\dots+\lambda_k\rangle
\to M[\lambda_1,\dots,\lambda_k]
\,.
$$
It follows by construction that $p\circ q=\id$.
In particular $q$ is injective.
Assume now that $E=A[\partial]\otimes_\mb FU$ is free as an $A[\partial]$-module.
Given $\varphi\in C^k(E,M)$, we want to construct a preimage $\widetilde\varphi\in\widetilde{C}^k(E,M)$
such that $p_*(\widetilde\varphi)=\varphi$.
Such preimage is defined by
\begin{align*}
& \widetilde{\varphi}_{\lambda_1,\dots,\lambda_k}(a_1(\partial)u_1,\dots,a_k(\partial)u_k) \\
& =
\big(\big|_{x_1=\partial}a_1^*(\lambda_1)\big)
\dots
\big(\big|_{x_k=\partial}a_k^*(\lambda_k)\big)
q\big(
\varphi_{\lambda_1+x_1,\dots,\lambda_k+x_k}(u_1,\dots,u_k)
\big)
\,,
\end{align*}
for $a_i(\partial)\in A[\partial],\,u_i\in U,\,i=1,\dots,k$.
Note that such $\widetilde{\varphi}$ satisfies condition \eqref{eq:poly-lambda} by construction,
and condition \eqref{eq:poly-skew} since $\varphi$ does.
Hence, $\widetilde{\varphi}$ lies in $\widetilde{C}^k(E,M)$.
On the other hand, for $u_1,\dots,u_k\in U$, we have
$$
(p_*\widetilde{\varphi})_{\lambda_1,\dots,\lambda_k}(u_1,\dots,u_k)
=
p\circ q\big(\varphi_{\lambda_1,\dots,\lambda_k}(u_1,\dots,u_k)\big)
=
\varphi_{\lambda_1,\dots,\lambda_k}(u_1,\dots,u_k)
\,.
$$
Hence, $p_*\widetilde{\varphi}$ coincides with $\varphi$ on $U^{\otimes k}$,
and then they coincide on the whole $E^{\otimes k}$
since they both satisfy \eqref{eq:poly-lambda}.
\end{proof}
\begin{remark}\label{rem:not-surj}
If $E$ is not free as an $A[\partial]$-module, the map $p_*:\,\widetilde{C}^k(E,M)\to C^k(E,M)$
needs not be surjective.
As an example, consider the case when $E=A=\mb F$, with trivial action of $\partial$,
and $M$ is an $\mb F[\partial]$-module with non-zero subspace of invariants 
$M^\partial=\{u\in M\,|\,\partial(u)=0\}\neq0$.
In this case, $p_*$ cannot be surjective for $k=1$ as $\widetilde{C}^1(\mb F,M)=0$
while $C^1(\mb F,M)=M^\partial\neq0$.
Indeed, 
an element $\widetilde{\varphi}\in\widetilde{C}^1(E,M)$ is uniquely determined by 
$\widetilde{\varphi}_\lambda(1)\in M[\lambda]$ satisfying
$0=\widetilde{\varphi}_\lambda(\partial 1)=-\lambda\widetilde{\varphi}_\lambda(1)$,
which implies $\widetilde{\varphi}_\lambda(1)=0$.
On the other hand, 
an element $\varphi$ in $C^1(\mb F,M)$ is a map
$\varphi:\,\mb F\to M[\lambda]/\langle\partial+\lambda\rangle\simeq M$
commuting with $\partial$,
hence it is uniquely determined by $\varphi(1)\in M^\partial$.
\end{remark}
\begin{remark}\label{rem:Ad-basic}
In fact, we have a left action of $A[\partial]$ on the spaces $\widetilde{C}^k(E,M)$, $k\geq0$,
given by
\begin{equation}\label{eq:apartial-basic}
(a(\partial)\widetilde\varphi)_{\lambda_1,\dots,\lambda_k}(u_1,\dots,u_k)
=
a(\lambda_1+\dots+\lambda_k+x)
\big(\big|_{x=\partial}\widetilde\varphi_{\lambda_1,\dots,\lambda_k}(u_1,\dots,u_k)\big)
\,.
\end{equation}
However, this action does not commute with the action of the differential $\widetilde d$,
as we have, for $a\in A$,
\begin{align*}
& ((\widetilde d\circ a-a\circ\widetilde d)
\widetilde\varphi)_{\lambda_1,\dots,\lambda_{k+1}}(u_1,\dots,u_{k+1}) \\
& =
\sum_{i=1}^{k+1}(-1)^{i+1}
\theta(u_i)_{\lambda_i}(a)
\widetilde\varphi_{\lambda_1,\stackrel{i}{\check\dots},\lambda_{k+1}}(u_1,\stackrel{i}{\check\dots},u_{k+1})
\,,
\end{align*}
hence, we do not have an induced action of $A[\partial]$, or of $A$, on the reduced complex
$(\overline{C}^\bullet(E,M),\overline{d})$.
\end{remark}

\begin{definition}\label{def:LCAd-cohomology-red}
The \emph{reduced LCAd cohomology} of the LCAd $E$ with coefficients in the $E$-module $M$
is the cohomology of the complex $(\overline{C}^\bullet(E,M),\overline{d})$, i.e.
$$
\overline{H}^k(E,M)
=
\ker\big(\overline{d}\big|_{\overline{C}^k(E,M)}\big)\big/ \im\big(\overline{d}\big|_{\overline{C}^{k-1}(E,M)}\big)
\,,\quad k\geq0
\,.
$$
\end{definition}

\begin{lemma}\label{lem:les-cohom}
For an LCAd $E$ and an $E$-module $M$,
we have 
\begin{equation}\label{eq:les-cohom2}
H^k(\partial\widetilde{C}^\bullet(E,M),\widetilde{d})
\simeq
\widetilde{H}^k(E,M)
\,,\,\, k\geq1\,.
\end{equation}
\end{lemma}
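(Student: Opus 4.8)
The plan is to realize the isomorphism \eqref{eq:les-cohom2} through the chain map furnished by multiplication by $\partial$ itself. First I would record that, by Proposition \ref{prop:reduced-LCAd-cohomology}, the operator $\partial$ commutes with $\widetilde{d}$; hence $\partial\widetilde{C}^\bullet(E,M)$ is genuinely a subcomplex of $(\widetilde{C}^\bullet(E,M),\widetilde{d})$ and multiplication by $\partial$ defines a morphism of complexes $\partial\colon\widetilde{C}^\bullet(E,M)\to\partial\widetilde{C}^\bullet(E,M)$, which is surjective in each degree by the very definition of the target as the image of $\partial$. The key structural input is that, for $k\geq1$, the action \eqref{eq:partial-basic} of $\partial$ on $\widetilde{C}^k(E,M)$ is multiplication by $\partial+\lambda_1+\dots+\lambda_k$, which is an injective endomorphism of $M[\lambda_1,\dots,\lambda_k]$ (as noted in the proof of Proposition \ref{prop:reduced-LCAd-cohomology}). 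Therefore $\partial\colon\widetilde{C}^k(E,M)\to\partial\widetilde{C}^k(E,M)$ is an isomorphism of vector spaces for every $k\geq1$.

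Next I would verify that this degreewise isomorphism descends to cohomology in every degree $k\geq1$. Fix such a $k$. For the cocycles, since $\partial$ is injective on $\widetilde{C}^{k+1}(E,M)$ (as $k+1\geq1$) and commutes with $\widetilde{d}$, one has $\widetilde{d}(\partial\varphi)=\partial(\widetilde{d}\varphi)=0$ if and only if $\widetilde{d}\varphi=0$; together with the surjectivity of $\partial$ onto $\partial\widetilde{C}^k(E,M)$ this yields a bijection $\ker(\widetilde{d}|_{\widetilde{C}^k})\xrightarrow{\sim}\ker(\widetilde{d}|_{\partial\widetilde{C}^k})$. For the coboundaries, the same commutation relation gives $\partial\big(\widetilde{d}\,\widetilde{C}^{k-1}(E,M)\big)=\widetilde{d}\big(\partial\widetilde{C}^{k-1}(E,M)\big)$, so $\partial$ sends $\im(\widetilde{d}|_{\widetilde{C}^{k-1}})$ onto $\im(\widetilde{d}|_{\partial\widetilde{C}^{k-1}})$, and since $\partial$ is an isomorphism on $\widetilde{C}^k(E,M)$ it does so bijectively. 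Passing to quotients then produces the desired isomorphism $\widetilde{H}^k(E,M)\xrightarrow{\sim}H^k(\partial\widetilde{C}^\bullet(E,M),\widetilde{d})$.

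The only genuinely degree-sensitive point, and the reason for the restriction $k\geq1$, is degree $0$: there $\partial$ acts on $\widetilde{C}^0(E,M)=M$ by the plain operator $\partial$, whose kernel $M^\partial$ may be nonzero, so $\partial\colon\widetilde{C}^0\to\partial\widetilde{C}^0$ need not be injective and the induced map on $H^0$ need not be an isomorphism (cf. Remark \ref{rem:not-surj}). I do not anticipate any serious obstacle: the entire argument rests on the injectivity of $\partial+\lambda_1+\dots+\lambda_k$ in positive degrees, already available, and on the identity $\widetilde{d}\circ\partial=\partial\circ\widetilde{d}$. The one place demanding care is the bookkeeping for the kernel and image bijections, where I must make sure that injectivity of $\partial$ is invoked only in degrees $\geq1$.
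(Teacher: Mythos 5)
Your proof is correct and follows the same basic strategy as the paper: the chain map given by multiplication by $\partial$, which is a degreewise isomorphism $\widetilde{C}^k(E,M)\to\partial\widetilde{C}^k(E,M)$ for $k\geq1$ because $\partial+\lambda_1+\dots+\lambda_k$ is injective on $M[\lambda_1,\dots,\lambda_k]$. The one place where your route genuinely diverges is the case $k=1$: the paper treats it separately, first checking that $M^\partial\subseteq\ker\big(\widetilde{d}|_{M}\big)$ so that $\widetilde{d}$ factors through $M/M^\partial\xrightarrow{\sim}\partial M$, and then running the diagram argument with that replacement. You avoid this entirely by observing that the isomorphism of cohomology only requires the equality of coboundary subspaces $\partial\big(\widetilde{d}\,\widetilde{C}^{0}(E,M)\big)=\widetilde{d}\big(\partial\widetilde{C}^{0}(E,M)\big)$, which follows from $\widetilde{d}\circ\partial=\partial\circ\widetilde{d}$ alone and does not need injectivity of $\partial$ in degree $0$. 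This makes your treatment of $k=1$ uniform with $k\geq2$ and slightly cleaner than the paper's; all the inputs you invoke (injectivity in positive degree, the commutation relation) are available, so there is no gap.
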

\begin{proof}
The map $\partial:\,\widetilde{C}^k(E,M)\to\partial\widetilde{C}^k(E,M)$ is obviously surjective.
Recalling \eqref{eq:partial-basic}, we also have that 
$\partial:\,\widetilde{C}^k(E,M)\to\partial\widetilde{C}^k(E,M)$
is injective for every $k\geq1$,
since $\partial+\lambda_1+\dots+\lambda_k$ is injective on $M[\lambda_1,\dots,\lambda_k]$.
While, for $k=0$, this is not necessarily the case, as
$$
\ker\big(\partial\big|_{\widetilde{C}^0(E,M)}\big)
=
\ker(\partial|_M)
=:M^\partial
\,.
$$
Hence, for $k\geq2$, we then have the commutative diagram
$$
\begin{tikzcd}
\widetilde{C}^{k-1}(E,M) \arrow[r, "\sim"{below}, "\partial"{above}] \arrow[d, "\widetilde{d}"]
& \partial\widetilde{C}^{k-1}(E,M) \arrow[d, "\widetilde{d}"] \\
\widetilde{C}^{k}(E,M) \arrow[r, "\sim"{below}, "\partial"{above}] \arrow[d, "\widetilde{d}"]
& \partial\widetilde{C}^{k}(E,M) \arrow[d, "\widetilde{d}"] \\
\widetilde{C}^{k+1}(E,M) \arrow[r, "\sim"{below}, "\partial"{above}]
& \partial\widetilde{C}^{k+1}(E,M) 
\end{tikzcd}
$$
from which it follows immediately that
\begin{align*}
& \widetilde{H}^k(E,M)
=
\ker\big(\widetilde{d}\big|_{\widetilde{C}^{k}(E,M)}\big)
\big/
\im\big(\widetilde{d}\big|_{\widetilde{C}^{k-1}(E,M)}\big) \\
& \simeq
\ker\big(\widetilde{d}\big|_{\partial\widetilde{C}^{k}(E,M)}\big)
\big/
\im\big(\widetilde{d}\big|_{\partial\widetilde{C}^{k-1}(E,M)}\big)
=
H^k(\partial\widetilde{C}^\bullet(E,M),\widetilde{d})
\,.
\end{align*}
A similar argument works for $k=1$. We have the commutative diagram
$$
\begin{tikzcd}
\widetilde{C}^{0}(E,M)=M \arrow[r,  twoheadrightarrow, "\partial"{above}] \arrow[d, "\widetilde{d}"]
& \partial\widetilde{C}^{0}(E,M)=\partial M \arrow[d, "\widetilde{d}"] \\
\widetilde{C}^{1}(E,M) \arrow[r, "\sim"{below}, "\partial"{above}] \arrow[d, "\widetilde{d}"]
& \partial\widetilde{C}^{1}(E,M) \arrow[d, "\widetilde{d}"] \\
\widetilde{C}^{2}(E,M) \arrow[r, "\sim"{below}, "\partial"{above}]
& \partial\widetilde{C}^{2}(E,M) 
\end{tikzcd}
$$
The first line of maps factors through
$$
M\twoheadrightarrow M/M^\partial \stackrel{\sim}{\longrightarrow} \partial M
\,.
$$
On the other hand, $M^\partial$ lies in the kernel of $\widetilde{d}:\, M\to\widetilde{C}^{1}(E,M)$.
Indeed, if $m\in M^\partial$, i.e. $\partial m=0$, then
$$
(\partial+\lambda)(\widetilde{d}m)_\lambda(u)
=
(\partial+\lambda)(u_\lambda m)
=
u_\lambda(\partial m)
=0
\,,
$$
which implies $\widetilde{d}m=0$ as $\partial+\lambda$ is injective on $M[\lambda]$.
Hence, the map $\widetilde{d}:\, M\to\widetilde{C}^{1}(E,M)$ factors through
$$
M\twoheadrightarrow M/M^\partial
\stackrel{\hat d}{\longrightarrow}
\widetilde{C}^{1}(E,M)
\,.
$$
And, by construction, $\widetilde{d}(M)=\hat{d}(M/M^\partial)$.
Then, the above diagram implies
$$
\begin{tikzcd}
M/M^\partial \arrow[r,  "\sim"{below}] \arrow[d, "\hat{d}"]
& \partial M \arrow[d, "\widetilde{d}"] \\
\widetilde{C}^{1}(E,M) \arrow[r, "\sim"{below}, "\partial"{above}] \arrow[d, "\widetilde{d}"]
& \partial\widetilde{C}^{1}(E,M) \arrow[d, "\widetilde{d}"] \\
\widetilde{C}^{2}(E,M) \arrow[r, "\sim"{below}, "\partial"{above}]
& \partial\widetilde{C}^{2}(E,M) 
\end{tikzcd}
$$
From this it follows immediately that
\begin{align*}
& \widetilde{H}^1(E,M)
=
\ker\big(\widetilde{d}\big|_{\widetilde{C}^{1}(E,M)}\big)
\big/
\hat{d}(M/M^\partial) \\
& \simeq
\ker\big(\widetilde{d}\big|_{\partial\widetilde{C}^{1}(E,M)}\big)
\big/
\widetilde{d}(\partial M)
=
H^1(\partial\widetilde{C}^\bullet(E,M),\widetilde{d})
\,.
\end{align*}
\end{proof}

\begin{proposition}\label{prop:les-cohom}
Let $E$ be an LCAd which is free as a left $A[\partial]$-module
and let $M$ be a module over the LCAd $E$.
We have a long exact sequence of the form
\begin{equation}\label{eq:les-cohom}
\begin{split}
& \widetilde{H}^0(E,M)
\to
H^0(E,M)
\to
\widetilde{H}^1(E,M)
\to
\widetilde{H}^1(E,M)
\to
H^1(E,M)
\to
\dots \\
& \dots 
\to
\widetilde{H}^k(E,M)
\to
\widetilde{H}^k(E,M)
\to
H^k(E,M)
\to
\widetilde{H}^{k+1}(E,M)
\to
\dots
\end{split}
\end{equation}
\end{proposition}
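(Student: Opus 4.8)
The plan is to exhibit \eqref{eq:les-cohom} as the long exact cohomology sequence associated to a short exact sequence of complexes, and then to rewrite it using Lemma \ref{lem:les-cohom}. By Proposition \ref{prop:reduced-LCAd-cohomology}, the pushforward $p_*:\,\widetilde{C}^\bullet(E,M)\to C^\bullet(E,M)$ is a morphism of complexes whose kernel is the subcomplex $\partial\widetilde{C}^\bullet(E,M)$, and since $E$ is free as a left $A[\partial]$-module, $p_*$ is moreover surjective. Hence one obtains a short exact sequence of complexes
\begin{equation*}
0\to \partial\widetilde{C}^\bullet(E,M)\stackrel{\iota}{\hookrightarrow}\widetilde{C}^\bullet(E,M)\stackrel{p_*}{\twoheadrightarrow} C^\bullet(E,M)\to 0
\,,
\end{equation*}
where $\iota$ is the inclusion of the subcomplex.

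First I would apply the standard zig-zag lemma of homological algebra to this short exact sequence, producing the long exact cohomology sequence
\begin{equation*}
\dots\to H^k(\partial\widetilde{C}^\bullet)\stackrel{\iota_*}{\to}\widetilde{H}^k(E,M)\stackrel{p_*}{\to} H^k(E,M)\stackrel{\delta}{\to} H^{k+1}(\partial\widetilde{C}^\bullet)\to\dots
\,,
\end{equation*}
with connecting homomorphisms $\delta$. At this stage the terms $H^k(\partial\widetilde{C}^\bullet)$ of the auxiliary complex still appear, and the next step is to eliminate them.

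The key step is to invoke Lemma \ref{lem:les-cohom}, which provides isomorphisms $H^k(\partial\widetilde{C}^\bullet,\widetilde d)\simeq\widetilde{H}^k(E,M)$ for every $k\geq1$, induced by the isomorphism $\partial:\,\widetilde{C}^k(E,M)\stackrel{\sim}{\to}\partial\widetilde{C}^k(E,M)$, which is bijective precisely for $k\geq1$ by injectivity of $\partial+\lambda_1+\dots+\lambda_k$ on $M[\lambda_1,\dots,\lambda_k]$. Substituting these isomorphisms for all $k\geq1$ turns each segment $H^k(E,M)\stackrel{\delta}{\to}H^{k+1}(\partial\widetilde{C}^\bullet)\stackrel{\iota_*}{\to}\widetilde{H}^{k+1}(E,M)$ into $H^k(E,M)\to\widetilde{H}^{k+1}(E,M)\to\widetilde{H}^{k+1}(E,M)$, which is exactly the repeated pattern displayed in \eqref{eq:les-cohom}. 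Reading the resulting sequence from the term $\widetilde{H}^0(E,M)\stackrel{p_*}{\to}H^0(E,M)$ onward then yields \eqref{eq:les-cohom}. I would also note, as an aside that clarifies the maps, that under the identification of Lemma \ref{lem:les-cohom} the self-map $\widetilde{H}^{k}(E,M)\to\widetilde{H}^{k}(E,M)$ is precisely the endomorphism induced on basic cohomology by the action \eqref{eq:partial-basic} of $\partial$, since $\iota\circ\partial$ is that action on $\widetilde{C}^k(E,M)$.

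The computations underlying the three inputs, namely exactness and surjectivity of $p_*$ from Proposition \ref{prop:reduced-LCAd-cohomology} and the isomorphism $H^k(\partial\widetilde{C}^\bullet)\simeq\widetilde{H}^k(E,M)$ from Lemma \ref{lem:les-cohom}, are already in place, so the remaining work is purely formal and there is no deep obstacle. The only point requiring care is the low-degree end: Lemma \ref{lem:les-cohom} fails for $k=0$, where $\partial:\,\widetilde{C}^0(E,M)=M\to\partial M$ has kernel $M^\partial$, so that $H^0(\partial\widetilde{C}^\bullet)\simeq\partial\widetilde{H}^0(E,M)$ need not vanish. Consequently the full zig-zag sequence begins with $0\to H^0(\partial\widetilde{C}^\bullet)\to\widetilde{H}^0(E,M)\stackrel{p_*}{\to} H^0(E,M)$, and \eqref{eq:les-cohom} is recovered by discarding this initial term and starting the display at $\widetilde{H}^0(E,M)$; accordingly, exactness at the first listed term is not asserted. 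This truncation is the only subtlety in the argument.
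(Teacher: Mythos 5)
Your proposal is correct and follows essentially the same route as the paper: the short exact sequence of complexes $0\to\partial\widetilde{C}^\bullet(E,M)\to\widetilde{C}^\bullet(E,M)\to C^\bullet(E,M)\to 0$ (with surjectivity from the freeness hypothesis via Proposition \ref{prop:reduced-LCAd-cohomology}), the associated long exact sequence, the substitution $H^k(\partial\widetilde{C}^\bullet(E,M),\widetilde{d})\simeq\widetilde{H}^k(E,M)$ for $k\geq1$ from Lemma \ref{lem:les-cohom}, and the truncation of the degree-zero terms. Your additional remarks identifying the self-map with the endomorphism induced by $\partial$ and handling the $k=0$ failure of the lemma are accurate refinements of what the paper leaves implicit.
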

\begin{proof}
According to Proposition \ref{prop:reduced-LCAd-cohomology},
for an LCAd $E$ and an $E$-module $M$
we have an exact sequence of complexes
$$
0\to (\partial\widetilde{C}^\bullet(E,M),\widetilde{d})
\stackrel{\iota}{\longrightarrow}
(\widetilde{C}^\bullet(E,M),\widetilde{d})
\stackrel{p_*}{\longrightarrow}
(C^\bullet(E,M),d)
\to 0
\,,
$$
where $\iota$ is the canonical inclusion map,
and the map $p_*$ is obtained composing with the quotient map $p$.
In fact, $\iota$ is always injective, while $p_*$ is surjective
under the assumption that $E$ is free as left module over $A[\partial]$.
Hence, we get the corresponding long exact sequence in cohomology:
\begin{align*}
& 0\to H^0(\partial\widetilde{C}^\bullet(E,M),\widetilde{d})
\to
\widetilde{H}^0(E,M)
\to
H^0(E,M)
\stackrel{\delta}{\longrightarrow} \\
& \stackrel{\delta}{\longrightarrow}
H^1(\partial\widetilde{C}^\bullet(E,M),\widetilde{d})
\to
\widetilde{H}^1(E,M)
\to
H^1(E,M)
\stackrel{\delta}{\longrightarrow} \dots \\
& \dots \stackrel{\delta}{\longrightarrow}
H^k(\partial\widetilde{C}^\bullet(E,M),\widetilde{d})
\to
\widetilde{H}^k(E,M)
\to
H^k(E,M)
\stackrel{\delta}{\longrightarrow} \dots
\end{align*}
where $\delta$ is the coboundary map.
Combining this long exact sequence with the isomorphisms \eqref{eq:les-cohom2},
and ignoring the first two maps of the sequence,
we get the exact sequence \eqref{eq:les-cohom}.
\end{proof}

\begin{corollary}\label{cor:les-cohom}
Let $E$ be an LCAd which is free as a left $A[\partial]$-module
and let $M$ be a module over the LCAd $E$.
If the basic LCAd cohomology vanishes in degree $\geq d$, then so does the LCAd cohomology:
$$
\widetilde{H}^k(E,M)=0 \text{ for } k\geq d
\,\,\Rightarrow\,\,
H^k(E,M)=0 \text{ for } k\geq d
\,.
$$
\end{corollary}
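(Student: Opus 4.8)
The plan is to deduce the statement directly from the long exact sequence \eqref{eq:les-cohom} established in Proposition \ref{prop:les-cohom}. Since $E$ is assumed free as a left $A[\partial]$-module, that proposition applies and provides the sequence in full; the entire content of the corollary is then a purely formal consequence of exactness, requiring no new computation.

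First I would fix $k\geq d$ and isolate the three consecutive terms of \eqref{eq:les-cohom} centered at $H^k(E,M)$, namely the segment
$$
\widetilde{H}^k(E,M)
\to
H^k(E,M)
\to
\widetilde{H}^{k+1}(E,M)
\,.
$$
By the standing hypothesis $\widetilde{H}^j(E,M)=0$ for all $j\geq d$, both outer terms vanish: the left term because $k\geq d$, and the right term because $k+1\geq d$ as well. Exactness of the sequence at the middle spot then yields $\ker\big(H^k(E,M)\to\widetilde{H}^{k+1}(E,M)\big)=\im\big(\widetilde{H}^k(E,M)\to H^k(E,M)\big)=0$, so the map $H^k(E,M)\to\widetilde{H}^{k+1}(E,M)=0$ is injective; hence $H^k(E,M)=0$. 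As $k\geq d$ was arbitrary, this is exactly the desired conclusion.

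The only place where the freeness hypothesis genuinely enters is in guaranteeing that the long exact sequence \eqref{eq:les-cohom} exists at all: its derivation in Proposition \ref{prop:les-cohom} rests on the surjectivity of the pushforward $p_*$, which holds precisely under the assumption that $E$ is free (see Proposition \ref{prop:reduced-LCAd-cohomology}). Granting that input, there is no real obstacle here—the argument is immediate, and the only care needed is in correctly reading off which outer terms of the sequence flank $H^k(E,M)$ and checking that the vanishing hypothesis covers both of them for every $k\geq d$.
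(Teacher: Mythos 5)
Your argument is correct and is exactly the paper's intended proof: the paper's own justification is simply that the corollary is an immediate consequence of the long exact sequence \eqref{eq:les-cohom}, and you have correctly identified the relevant segment $\widetilde{H}^k(E,M)\to H^k(E,M)\to\widetilde{H}^{k+1}(E,M)$ and the role of the freeness hypothesis. Nothing further is needed.
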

\begin{proof}
It is an immediate consequence of the long exact sequence \eqref{eq:les-cohom}.
\end{proof}

\subsection{Abelian extensions of an LCAd}\label{sec:3.3}

\begin{definition}\label{def:abext}
Let $E$ be an LCAd over the differential algebra $A$,
and let $M$ be a left $A[\partial]$-module, which we view as an \emph{abelian} LCAd,
with $[\cdot\,_\lambda\,\cdot]_M=0$ and $\theta_M=0$.
An \emph{abelian extension} of $E$ by $M$ is an LCAd $\widetilde{E}$
endowed with a short exact sequence of homomorphisms of LCAd
\begin{equation}\label{eq:ses-LCAd}
0\to M\stackrel{\iota}{\hookrightarrow} \widetilde E\stackrel{p}{\twoheadrightarrow} E\to 0
\,.
\end{equation}
\end{definition}
Such abelian extension is called $A[\partial]$-\emph{split}
if there is a morphism of left $A[\partial]$-modules $j:\,E\to\widetilde E$
such that $p\circ j=\id_E$.
Equivalently, we can identify $\widetilde E=E\oplus M$ as a left $A[\partial]$-module.
In this case, the anchor map $\theta^{\widetilde{}}$ of $\widetilde E$ acts trivially on $M$
and restricts to $\theta_E$ on $E$, i.e. ($v\in E,m\in M$):
\begin{equation}\label{eq:anchor-ext}
\theta^{\widetilde{}}(v+m)=\theta_E(v)\in\CDer(A)
\,,
\end{equation}
while the $\lambda$-bracket $[\cdot\,_\lambda\,\cdot]^{\widetilde{}}$ on $\widetilde E$
has the form ($u,v\in E,\,m,n\in M$)
\begin{equation}\label{eq:lambda-ext}
[u+m_\lambda v+n]^{\widetilde{}}
=
[u_\lambda v]_E
+
\omega_\lambda(u,v)
+
\rho(u)_\lambda(n)
-
\rho(v)_\lambda^*(m)
\,,
\end{equation}
where $\omega_{\lambda}(u,v)\in M[\lambda]$ and $\rho(u)\in\CEnd(M)$.
The \emph{trivial} abelian extension corresponds to the choice $\omega=0$
and $\rho=0$.

Two abelian extensions $E_1$ and $E_2$ are \emph{equivalent}
if there is an isomorphism of LCAd $\chi:\,E_1\to E_2$
making the following diagram commute:
\begin{equation}\label{diagram}
\begin{tikzcd}
0 \arrow[r] & 
M \arrow[r, hook,  "\iota_1"{above}] \arrow[d, equal] & 
E_1 \arrow[r,  two heads, "p_1"{above}] \arrow[d, "\chi"] & 
E \arrow[r] \arrow[d, equal] & 
0 \\
0 \arrow[r] &
M \arrow[r,  hook, "\iota_2"{above}] & 
E_2 \arrow[r,  two heads, "p_2"{above}] & 
E \arrow[r] & 
0
\end{tikzcd}
\end{equation}

\begin{lemma}\label{lem:ext}
\begin{enumerate}[(a)]
\item
Equations \eqref{eq:anchor-ext} and \eqref{eq:lambda-ext}
define an $A[\partial]$-split abelian extension of $E$ by $M$
if and only if $\rho$ defines an $E$-module structure on $M$ via \eqref{eq:action2}
and $\omega_{\lambda}$ satisfies the following identities:
\begin{align}
& \omega_\lambda(a(\partial)u,b(\partial)v)
=
\big(\big|_{x=\partial}a^*(\lambda)\big)b(\lambda+x+y)\big(\big|_{y=\partial}\omega_{\lambda+x}(u,v)\big)
\label{eq:2cocycle1}
\,, \\
& \omega_{\lambda}(u,v)
=
-\big(\big|_{x=\partial}\omega_{-\lambda-x}(v,u)\big)
\label{eq:2cocycle2}
\,, \\
& \rho(u)_\lambda(\omega_\mu(v,w))
-\rho(v)_\mu(\omega_\lambda(u,w))
+\rho(w)_{\lambda+\mu}^*(\omega_\lambda(u,v)) \nonumber\\
& \quad
+\omega_\lambda(u,[v_\mu w])-\omega_\mu(v,[u_\lambda w])
-\omega_{\lambda+\mu}([u_\lambda v],w)
=0 \label{eq:2cocycle3}
\,.
\end{align}
\item
Let $E_1=E_2=E\oplus M$ be $A[\partial]$-split extensions of $E$ by $M$,
associated to the maps $\omega_i:\, E^{\otimes2}\to M[\lambda]$ and $\rho_i:\,E\to\CEnd(M)$, $i=1,2$,
via \eqref{eq:anchor-ext}-\eqref{eq:lambda-ext}.
An equivalence $\chi:\,E_1\to E_2$ of abelian extensions 
has the form $\chi(v+m)=v+m+\psi(v)$ for some homomorphism of left $A[\partial]$-modules $\psi:\,E\to M$,
and the commutativity of the diagram \eqref{diagram} is equivalent to requiring that
$\rho_1=\rho_2=:\rho$ and
\begin{equation}\label{equiv-ext}
{\omega_2}_\lambda(u,v)-{\omega_1}_\lambda(u,v)
=
\psi([u_\lambda v]_E)-\rho(u)_\lambda(\psi(v))+\rho(v)_\lambda^*(\psi(u))
\,.
\end{equation}
\end{enumerate}
\end{lemma}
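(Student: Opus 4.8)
The plan is to verify the LCAd axioms for the split extension $\widetilde E=E\oplus M$ directly, exploiting that the data in \eqref{eq:anchor-ext}--\eqref{eq:lambda-ext} is additive in the two summands, so that each axiom splits into an $E$-component and an $M$-component. First I would record two facts that hold automatically. Since the anchor \eqref{eq:anchor-ext} factors through the projection $\widetilde E\twoheadrightarrow E$ and equals $\theta_E$ there, it is an $A[\partial]$-module homomorphism, and condition (ii) of Definition \ref{def:LCAd} for $\widetilde E$ collapses to condition (ii) for $E$. Furthermore, for the bracket \eqref{eq:lambda-ext} the $E$-component of each of conditions (i), (i'), skewsymmetry and the Jacobi identity reduces verbatim to the corresponding axiom for $E$; hence all the genuine content sits in the $M$-components.

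I would then analyze these $M$-components, organized by their degree in the module arguments $m,n,p\in M$. The parts linear in $m,n,p$ involve only $\rho$ and reproduce exactly the axioms of Definition \ref{def:module} under $\rho(u)_\lambda(m)=u_\lambda m$: the $n$-terms of condition (i) give axiom (i), the $m$-terms give axiom (ii) (after using the adjunction \eqref{eq:phi-dual}, so that $\rho(v)^*_\lambda(m)=\big(\big|_{x=\partial}v_{-\lambda-x}m\big)$), and the Jacobi identity gives axiom (iii). This is exactly the semidirect product computation of Proposition \ref{prop:semi}, which is the case $\omega=0$. The parts \emph{free} of $m,n,p$ involve only $\omega$: the sesquilinearity forced by conditions (i) and (i') gives \eqref{eq:2cocycle1}, the $M$-component of skewsymmetry gives \eqref{eq:2cocycle2}, and the $\omega$-part of the Jacobi identity gives \eqref{eq:2cocycle3}. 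Conversely, granting the module axioms together with \eqref{eq:2cocycle1}--\eqref{eq:2cocycle3} and reassembling the components shows that every LCAd axiom holds, proving part (a). Cohomologically these three identities say precisely that $\omega$ is a $2$-cocycle in $C^\bullet(E,M)$: \eqref{eq:2cocycle1} is \eqref{eq:poly-lambda}, \eqref{eq:2cocycle2} is \eqref{eq:poly-skew}, and \eqref{eq:2cocycle3} is the closedness $d\omega=0$ for the differential \eqref{eq:differential}.

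The main obstacle is the Jacobi identity for \eqref{eq:lambda-ext}: its $M$-component produces many terms, and the work is to sort them by degree in $m,n,p$, track the signs dictated by \eqref{eq:Jac}, and correctly handle the adjoints $\rho(\cdot)^*$ that appear in the third bracket. I would expand $J_{\lambda,\mu}(u+m,v+n,w+p)$ term by term using \eqref{eq:lambda-ext}. The $p$-linear part collapses to $\rho(u)_\lambda\big(\rho(v)_\mu(p)\big)-\rho(v)_\mu\big(\rho(u)_\lambda(p)\big)-\rho([u_\lambda v])_{\lambda+\mu}(p)$, i.e. module axiom (iii); the $m$- and $n$-linear parts reduce to the same identity after applying skewsymmetry and the double-adjoint relation $(\phi^*)^*=\phi$; and the part free of $m,n,p$ is exactly the left-hand side of \eqref{eq:2cocycle3}.

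For part (b), I would argue directly from the commutativity of \eqref{diagram}. Commutativity of the left square, $\chi\circ\iota_1=\iota_2$, says that $\chi$ fixes $M$, while commutativity of the right square, $p_2\circ\chi=p_1$, says that $\chi$ covers $\id_E$; since $\chi$ is in particular an $A[\partial]$-module homomorphism of $E\oplus M$, these two constraints force $\chi(v+m)=v+m+\psi(v)$ for an $A[\partial]$-linear map $\psi\colon E\to M$. It then remains to impose that $\chi$ respects the $\lambda$-bracket (the anchor condition being automatic, as both anchors project to $\theta_E$ and $\chi$ covers $\id_E$). Expanding $\chi\big([\xi_\lambda\eta]_1\big)=[\chi(\xi)_\lambda\chi(\eta)]_2$ for $\xi=u+m$ and $\eta=v+n$ via \eqref{eq:lambda-ext} and comparing $M$-components, the parts linear in $m$ and $n$ force $\rho_1=\rho_2=:\rho$, and the part free of $m,n$ is precisely \eqref{equiv-ext}. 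This is a short computation with no real obstruction.
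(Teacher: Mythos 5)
Your proposal is correct and follows essentially the same route as the paper: both decompose each LCAd axiom for $E\oplus M$ according to which summands the arguments lie in (equivalently, by degree in the module entries), identifying the $M$-linear parts with the module axioms of Definition \ref{def:module} and the parts free of $M$ with \eqref{eq:2cocycle1}--\eqref{eq:2cocycle3}, and part (b) is handled identically by specializing the bracket-intertwining condition. No gaps.
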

\begin{proof}
For part (a), 
we need to show that
the $\lambda$-bracket $[\cdot\,_\lambda\,\cdot]^{\widetilde{}}$ on $E\oplus M$ 
and the anchor map $\theta^{\widetilde{}}:\,E\oplus M\to\CDer(A)$
satisfy the LCAd axioms of Definition \ref{def:LCAd}
if and only if the map $\rho:\,E\to\CEnd(M)$ satisfies the $E$-module axioms of Definition \ref{def:module}
and the map $\omega_\lambda:\,E^{\otimes2}\to M[\lambda]$
satisfies conditions \eqref{eq:2cocycle1}, \eqref{eq:2cocycle2} and \eqref{eq:2cocycle3}.
Indeed, 
axiom (i) of Definition \ref{def:LCAd} for the $\lambda$-bracket $[u_\lambda a(\partial)m]^{\widetilde{}}$,
for $u\in E$ and $m\in M$,
translates to axiom (i) of Definition \ref{def:module} (or equivalently to the condition that $(\rho(u),\theta(u))$
lies in $\mc G(A,M)$);
axiom (i') of Definition \ref{def:LCAd} for the $\lambda$-bracket $[a(\partial)u_\lambda m]^{\widetilde{}}$,
for $u\in E$ and $m\in M$,
translates to axiom (ii) of Definition \ref{def:module} (or equivalently to the fact that 
$\rho:\,E\to\CEnd(M)$ is a homomorphism of left $A[\partial]$-modules);
the Jacobi identity of the $\lambda$-bracket $[\cdot\,_\lambda\,\cdot]^{\widetilde{}}$
for the triple of elements $(u,v,m)$, with $u,v\in E$ and $m\in M$,
translates to axiom (iii) of Definition \ref{def:module} (or equivalently to the condition that 
$\rho:\,E\to\CEnd(M)$ is a homomorphism of LCA);
furthermore, the total formula \eqref{eq:tot-form} of Definition \ref{def:LCAd} 
for the $\lambda$-bracket $[a(\partial)u_\lambda b(\partial)v]^{\widetilde{}}$,
for $u,v\in E$ and $a(\partial),b(\partial)\in A[\partial]$,
translates to equation \eqref{eq:2cocycle1};
the skewsymmetry of the $\lambda$-bracket $[\cdot\,_\lambda\,\cdot]^{\widetilde{}}$
for the pair of elements $u,v\in E$
translates to equation \eqref{eq:2cocycle2};
and the Jacobi identity of the $\lambda$-bracket $[\cdot\,_\lambda\,\cdot]^{\widetilde{}}$
for the triple of elements $u,v,w\in E$
translates to equation \eqref{eq:2cocycle3}.
This shows one direction of claim (a).
The converse direction is also straightforward.

For part (b), the map $\chi:\,E\oplus M\to E\oplus M$, by the commutativity of the digram \eqref{diagram}
must restrict to the identity on $M$ and, modulo $M$, must factor to the identity of $E$;
namely, it must have the required form $\chi(v+m)=v+m+\psi(v)$ for some $\psi:\,E\to M$.
Since $\chi$ is a homomorphism of left $A[\partial]$-modules, so must be $\psi$.
Furthermore, $\chi$ trivially preserves the anchor map,
while it intertwines the $\lambda$-bracket $[\cdot\,_\lambda\,\cdot]_1$ and $[\cdot\,_\lambda\,\cdot]_2$
if and only if ($u,v\in E$, $m,n\in M$):
$$
[\chi(u+m)_\lambda\chi(v+n)]_2
=
\chi\big([u+m_\lambda v+n]_1\big)
\,.
$$
The above equations with $v=m=0$ (or $u=n=0$) translates  to the identity $\rho_1=\rho_2$,
while the above equation for $m=n=0$ translates to condition \eqref{equiv-ext}.
\end{proof}
According to Lemma \ref{lem:ext}, an $A[\partial]$-split abelian extension of an LCAd $E$
by an $E$-module $M$, with a given $E$-module structure $\rho$,
is the same as a map $\omega_{\lambda}:\,E^{\otimes2}\to M[\lambda]$
satisfying \eqref{eq:2cocycle1}-\eqref{eq:2cocycle3}.
\begin{proposition}\label{prop:ext}
Let $E$ be an LCAd and let $M$ be an $E$-module with action associated to 
the map $\rho:\,E\to\CEnd(M)$ via \eqref{eq:action2}.
\begin{enumerate}[(a)]
\item
The space of maps $\omega_{\lambda}:\,E^{\otimes2}\to M[\lambda]$,
defining an $A[\partial]$-split abelian extension of $E$ by the $E$-module $M$, 
is canonically isomorphic to $\ker\big(d\big|_{C^2(E,M)}\big)$.
\item
Under the above isomorphism, the subspace of maps $\omega_{\lambda}$
corresponding to trivial abelian extensions is identified with $\im\big(d\big|_{C^1(E,M)}\big)$.
\end{enumerate}
\end{proposition}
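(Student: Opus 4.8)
The plan is to reduce everything to Lemma~\ref{lem:ext}, which has already translated the data of an $A[\partial]$-split abelian extension into the algebraic conditions \eqref{eq:2cocycle1}--\eqref{eq:2cocycle3} on the map $\omega$, and the notion of equivalence into \eqref{equiv-ext}. What remains is to recognize these as the statements $\omega\in\ker\big(d|_{C^2(E,M)}\big)$ and $\omega\in\im\big(d|_{C^1(E,M)}\big)$.

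For part (a), I would first use the identification $M[\lambda_1,\lambda_2]/\langle\partial+\lambda_1+\lambda_2\rangle\simeq M[\lambda_1]$ given by $\lambda_2=-\lambda_1-\partial$ (with $\partial$ acting on the coefficients in $M$), so that a map $\omega_\lambda:\,E^{\otimes2}\to M[\lambda]$ is the same datum as a map $\varphi_{\lambda_1,\lambda_2}:\,E^{\otimes2}\to M[\lambda_1,\lambda_2]/\langle\partial+\lambda_1+\lambda_2\rangle$. Under this correspondence, condition \eqref{eq:2cocycle1} is exactly the sesquilinearity \eqref{eq:poly-lambda} for $k=2$, while condition \eqref{eq:2cocycle2}, via the definition \eqref{eq:phi-dual} of the adjoint, is exactly the skewsymmetry \eqref{eq:poly-skew} for the transposition in $S_2$. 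Hence $\omega$ satisfies the first two conditions if and only if $\varphi\in C^2(E,M)$.

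Next I would evaluate $d\varphi$ from \eqref{eq:differential} with $k=2$, writing ${u_i}_{\lambda_i}(\cdot)=\rho(u_i)_{\lambda_i}(\cdot)$ and substituting $\lambda_3=-\lambda_1-\lambda_2-\partial$. The three ``action'' summands reproduce the first line of \eqref{eq:2cocycle3}: the first two match directly, while the third, ${u_3}_{\lambda_3}\varphi_{\lambda_1,\lambda_2}(u_1,u_2)$, becomes $\rho(u_3)^*_{\lambda_1+\lambda_2}\big(\omega_{\lambda_1}(u_1,u_2)\big)$ by the definition \eqref{eq:phi-dual} of $\phi^*$. The three ``bracket'' summands reproduce the second line: the summand involving $[{u_1}_{\lambda_1}u_2]$ matches directly, whereas those involving $[{u_1}_{\lambda_1}u_3]$ and $[{u_2}_{\lambda_2}u_3]$ require reordering the two arguments of $\omega$ by skewsymmetry \eqref{eq:2cocycle2}. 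Thus $d\varphi=0$ is equivalent to \eqref{eq:2cocycle3}, and together with the previous paragraph this yields the canonical isomorphism of (a). The delicate point, where I expect the main obstacle, is precisely this last matching: since the substituted variable $\lambda_3$ carries a $\partial$, the skewsymmetry reorderings must be carried out in $M[\lambda_1,\lambda_2]$, before passing to the quotient, exactly as encoded by \eqref{eq:phi-dual}; keeping track of the signs and of the spectral variables through these substitutions is the only real work.

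For part (b), Lemma~\ref{lem:ext}(b) says that the extension attached to $\omega$ (for the fixed module structure $\rho$) is trivial, i.e. equivalent to the zero extension $\omega_1=0$, if and only if there is $\psi\in\Hom_{A[\partial]}(E,M)$ with
\[
\omega_\lambda(u,v)=\psi([u_\lambda v]_E)-\rho(u)_\lambda(\psi(v))+\rho(v)^*_\lambda(\psi(u))
\,.
\]
I would then identify $C^1(E,M)\simeq\Hom_{A[\partial]}(E,M)$ and compute $d\psi$ from \eqref{eq:differential} with $k=1$, again substituting $\lambda_2=-\lambda-\partial$; the three resulting terms give $(d\psi)_\lambda(u,v)=\rho(u)_\lambda(\psi(v))-\rho(v)^*_\lambda(\psi(u))-\psi([u_\lambda v])$, which is the negative of the right-hand side displayed above. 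As $\psi$ ranges over $C^1(E,M)$, the maps $\omega$ coming from trivial extensions therefore range exactly over $\im\big(d|_{C^1(E,M)}\big)$, proving (b).
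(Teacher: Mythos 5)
Your proposal is correct and follows essentially the same route as the paper: both reduce to Lemma \ref{lem:ext}, use the identification $M[\lambda_1,\lambda_2]/\langle\partial+\lambda_1+\lambda_2\rangle\simeq M[\lambda]$ to match \eqref{eq:2cocycle1}--\eqref{eq:2cocycle2} with the $2$-cochain conditions, compute $d\varphi$ for $k=2$ (resp. $k=1$) and compare with \eqref{eq:2cocycle3} (resp. \eqref{equiv-ext}), with the overall sign in part (b) being immaterial since one is identifying subspaces. Your explicit flagging of the skewsymmetry reorderings needed when the substituted variable carries a $\partial$ matches the paper's closing remark that the identification with \eqref{eq:2cocycle3} holds ``by the skewsymmetry \eqref{eq:2cocycle2} of $\omega_\lambda$.''
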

\begin{proof}
Note that we can identify 
$$
M[\lambda_1,\lambda_2]/\langle\partial+\lambda_1+\lambda_2\rangle\simeq M[\lambda]
\,
$$
by setting $\lambda_1=\lambda$ and $\lambda_2=-\lambda-\partial$ (with $\partial$ acting on coefficients).
Under this identification, equations \eqref{eq:poly-lambda} and \eqref{eq:poly-skew}
defining a $2$-cochain become \eqref{eq:2cocycle1} and \eqref{eq:2cocycle2} respectively,
while the condition $d\varphi=0$ coincides with equation in \eqref{eq:2cocycle3}.
We only check the last assertion.
Indeed, by the definition \eqref{eq:differential} of the differential $d$,
we have
\begin{align*}
& (d\varphi)_{\lambda_1,\lambda_2,\lambda_3}(u,v,w)
=
u_{\lambda_1}(\varphi_{\lambda_2,\lambda_3}(v,w))
- v_{\lambda_2}(\varphi_{\lambda_1,\lambda_3}(u,w))
+ w_{\lambda_3}(\varphi_{\lambda_1,\lambda_2}(u,v)) \\
&\qquad
- \varphi_{\lambda_1+\lambda_2,\lambda_3}\big([u_{\lambda_1}v],w\big)
+ \varphi_{\lambda_1+\lambda_3,\lambda_2}\big([u_{\lambda_1}w],v\big)
- \varphi_{\lambda_2+\lambda_3,\lambda_1}\big([v_{\lambda_2}w],u\big) \\
&\quad
=
\rho(u)_{\lambda}(\omega_{\mu}(v,w))
- \rho(v)_{\mu}(\omega_{\lambda}(u,w))
+ \big(\big|_{x=\partial} \rho(w)_{-\lambda-\mu-x}(\omega_{\lambda}(u,v)) \big) \\
&\qquad 
- \omega_{\lambda+\mu}\big([u_{\lambda}v],w\big)
+ \big(\big|_{x=\partial} \omega_{-\mu-x}\big([u_{\lambda}w],v\big) \big)
- \big(\big|_{x=\partial} \omega_{-\lambda-x}\big([v_{\mu}w],u\big) \big)
\,.
\end{align*}
For the last equality we used the identification
$M[\lambda_1,\lambda_2,\lambda_3]/\langle\partial+\lambda_1+\lambda_2+\lambda_3\rangle
\simeq M[\lambda,\mu]$
by setting $\lambda_1=\lambda$, $\lambda_2=\mu$ and $\lambda_3=-\lambda-\mu-\partial$,
and we set $\varphi_{\lambda,-\lambda-\partial}=\omega_\lambda$.
We then observe that the right-hand side of the above equation
coincides with the left-hand side of \eqref{eq:2cocycle3} 
by the skewsymmetry \eqref{eq:2cocycle2} of $\omega_\lambda$.
This proves part (a).

For part (b), we can canonically identify 
$M[\lambda_1]/\langle\partial+\lambda_1\rangle\simeq M$
and, under this identification, a 1-cochain $\psi\in C^1(E,M)$ becomes a map
$\psi:\,E\to M$ and $(d\psi)_\lambda(u,v)$.
Moreover, by the definition \eqref{eq:differential} of the differential $d$,
\begin{align*}
& (d\varphi)_{\lambda_1,\lambda_2}(u,v)
=
u_{\lambda_1}(\varphi_{\lambda_2}(v))
- v_{\lambda_2}(\varphi_{\lambda_1}(u)) 
- \varphi_{\lambda_1+\lambda_2}([u_{\lambda_1}v]) \\
&\qquad
=
\rho(u)_{\lambda}(\psi(v))
- \rho(v)^*_{\lambda}(\psi(u)) 
- \psi([u_{\lambda}v])
\,.
\end{align*}
Again, for the last equality we identified
$M[\lambda_1,\lambda_2]/\langle\partial+\lambda_1+\lambda_2\rangle\simeq M[\lambda]$
by setting $\lambda_1=\lambda$, $\lambda_2=-\lambda-\partial$,
and we set $\varphi_{-\partial}=\psi$.
The right-hand side above coincides, up to a sign,
with the right-hand side of \eqref{equiv-ext}.
\end{proof}
\begin{corollary}\label{cor:ext}
The second cohomology $H^2(E,M)$ classifies $A[\partial]$-split abelian extensions
of the LCAd $E$ by the $E$-module $M$, up to trivial extensions.
\end{corollary}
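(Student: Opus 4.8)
The plan is to read the corollary off directly from Proposition \ref{prop:ext} together with Lemma \ref{lem:ext}, since all of the computational content has already been isolated in those results. The key preliminary observation is that the $E$-module structure on $M$ is fixed as part of the data: by Lemma \ref{lem:ext}(a) the only remaining freedom in building an $A[\partial]$-split abelian extension of $E$ by $M$ is the choice of a map $\omega_\lambda:\,E^{\otimes2}\to M[\lambda]$ satisfying \eqref{eq:2cocycle1}--\eqref{eq:2cocycle3}, and by Lemma \ref{lem:ext}(b) any equivalence of extensions is forced to preserve the module structure $\rho$. Hence the entire classification takes place within a single fixed $\rho$, and reduces to a classification of the cochains $\omega_\lambda$.

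First I would invoke Proposition \ref{prop:ext}(a), under the identification $M[\lambda_1,\lambda_2]/\langle\partial+\lambda_1+\lambda_2\rangle\simeq M[\lambda]$, to identify the set of such $\omega_\lambda$ with the space of $2$-cocycles $\ker\big(d\big|_{C^2(E,M)}\big)$. This turns each $A[\partial]$-split abelian extension (with the fixed $\rho$) into a well-defined closed $2$-cochain, and conversely.

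Next I would match the equivalence relation on extensions with the coboundary relation on cocycles. By Lemma \ref{lem:ext}(b), two extensions with cocycles $\omega_1,\omega_2$ are equivalent precisely when $\omega_2-\omega_1$ equals the right-hand side of \eqref{equiv-ext} for some $A[\partial]$-module homomorphism $\psi:\,E\to M$. The computation in the proof of Proposition \ref{prop:ext}(b) shows that this right-hand side is, up to sign, exactly $(d\psi)_\lambda(u,v)$; hence $\omega_1$ and $\omega_2$ are equivalent if and only if $\omega_2-\omega_1\in\im\big(d\big|_{C^1(E,M)}\big)$. Specializing to $\omega_1=0$ recovers the statement of Proposition \ref{prop:ext}(b), namely that an extension is trivial (equivalent to the split one) exactly when its cocycle is a coboundary.

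Combining these two steps, the assignment sending an extension to the class $[\omega]$ of its cocycle descends to a bijection between equivalence classes of $A[\partial]$-split abelian extensions of $E$ by the $E$-module $M$ and
$$
\ker\big(d\big|_{C^2(E,M)}\big)\big/\im\big(d\big|_{C^1(E,M)}\big)=H^2(E,M)\,.
$$
There is no substantial obstacle beyond bookkeeping: the only point requiring care is to confirm that the sign discrepancy noted in Proposition \ref{prop:ext}(b) between $(d\psi)_\lambda$ and the right-hand side of \eqref{equiv-ext} is harmless, which it is, since $\im(d)$ is a linear subspace and therefore invariant under negation. I therefore expect the proof to be a short two-line deduction citing Proposition \ref{prop:ext} and Lemma \ref{lem:ext}.
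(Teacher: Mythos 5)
Your proposal is correct and follows exactly the route the paper intends: the paper's own proof of this corollary is simply ``Obvious,'' deferring all content to Lemma \ref{lem:ext} and Proposition \ref{prop:ext}, which is precisely what you assemble (identification of split extensions with $2$-cocycles, and of equivalence with differing by a coboundary, the sign being harmless). No gaps.
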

\begin{proof}
Obvious.
\end{proof}

\section{Lie conformal algebroids and Poisson vertex algebras}\label{sec:5}

\subsection{Definition of PVA, PVA module, and the PVA $S_A(E)$ of an LCAd}\label{sec:5.1}

Recall that a \emph{Poisson vertex algebra} (PVA) is a differential algebra $\mc V$
endowed with an LCA $\lambda$-bracket $\{\cdot\,_\lambda\,\cdot\}:\,\mc V\times\mc V\to\mc V[\lambda]$
satisfying the Leibniz rule ($f,g,h\in\mc V$):
\begin{equation}\label{eq:leib}
\{f_\lambda gh\}=g\{f_\lambda h\}+h\{f_\lambda g\}\,,
\end{equation}
or, equivalently, the \emph{right} Leibniz rule:
\begin{equation}\label{eq:leibr}
\{fg_\lambda h\}
=
\{f_{\lambda+x} h\}\big(\big|_{x=\partial}g\big)
+
\{g_{\lambda+x}h\}\big(\big|_{x=\partial}f\big)\,.
\end{equation}

For example, if $R$ is an LCA, then the symmetric algebra $S(R)$
acquires automatically the structure of a PVA,
where $\partial$ is obtained by extending, uniquely, the endomorphism $\partial$ of the LCA $R$
to a derivation of the commutative associative product of $S(R)$,
and the $\lambda$-bracket is obtained by extending, uniquely, 
the $\lambda$-bracket $[\cdot\,_\lambda\,\cdot]$ of the LCA $R$ to $S(R)$
via the Leibniz rules \eqref{eq:leib} and \eqref{eq:leibr}.

This construction can be generalized to an arbitrary LCAd $E$, over a differential algebra $A$.
In general, given a differential algebra $A$ and a left $A[\partial]$-module $E$,
we can construct the symmetric algebra of $E$ over $A$,
$$
S_A(E)
=
A\oplus E\oplus S^2_A(E)\oplus\dots\,,
$$
quotient of the tensor algebra $\mc T_A(E)=\oplus_{n\geq0}(E\otimes_A\dots\otimes_A E)$
by the two-sided ideal generated by the relations
$u\otimes_A v-v\otimes_Au$, for $u,v\in E$.
It is a commutative associative algebra over $A$,
where the commutative associative product is induced by the tensor product (over $A$)
of the tensor algebra $\mc T_A(E)$:
$$
u_1\dots u_n
=
[u_1\otimes_A\dots\otimes_A u_n]\in S_A(E)
\,,
$$
for $u_1,\dots,u_n\in E$.
The action of $\partial$ on $E$ uniquely extends to a derivation of $S_A(E)$:
$$
\partial(a u_1\dots u_n)
=
(\partial a) u_1\dots u_n 
+
\sum_{i=1}^n
a\, u_1\dots(\partial u_i)\dots u_n
\,.
$$
In particular, $S_A(E)$ acquires the structure of a left $A[\partial]$-module,
where $A$ acts by multiplication.
\begin{theorem}\label{thm:SE}
Let $E$ be an LCAd over the differential algebra $A$.
Then, the symmetric algebra $S_A(E)$ of $E$ over $A$,
has a natural structure of a PVA.
The $\lambda$-bracket of $S_A(E)$
is obtained by letting ($u,v\in E,\,a,b\in A$):
\begin{equation}\label{eq:lambda-vvv}
\{a_\lambda b\}=0
\,,\quad
\{u_\lambda a\} = \theta(u)_\lambda(a)
\,,\quad
\{a_\lambda u\} = -\theta(u)^*_\lambda(a)
\,,\quad
\{u_\lambda v\} = [u_\lambda v]
\,,
\end{equation}
and extending it (uniquely) to a $\lambda$-bracket of $S_A(E)$
via the Leibniz rules \eqref{eq:leib} and \eqref{eq:leibr}.
\end{theorem}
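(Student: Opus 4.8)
The plan is to treat the PVA $\lambda$-bracket on $S_A(E)$ as a conformal biderivation of the commutative associative algebra $S_A(E)$, determined by its values on the algebra generators $A\cup E$. This mirrors the classical fact that the symmetric algebra $S_R(F)$ of a Lie algebroid $F$ over $R$ carries a canonical Poisson structure. Since $A\cup E$ generates $S_A(E)$ as an algebra and the Leibniz rules \eqref{eq:leib}--\eqref{eq:leibr} express $\{f_\lambda\cdot\}$ and $\{\cdot\,_\lambda g\}$ as (conformal) derivations, the extension of \eqref{eq:lambda-vvv} to all of $S_A(E)$ is unique; the content of the theorem is therefore (1) that this extension is \emph{well defined}, i.e. compatible with the defining relations of $S_A(E)$, and (2) that it satisfies the PVA axioms.

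For (1), I would first fix $u\in E$ and define $\{u_\lambda\cdot\}$ as the unique conformal derivation of $S_A(E)$ extending $\theta(u)_\lambda$ on $A$ and $[u_\lambda\cdot]$ on $E$. Its well-definedness amounts to compatibility with the single family of relations of $S_A(E)$, namely that the algebra product $a\cdot u$ ($a\in A$, $u\in E$) equals the $A$-module action $au\in E$; this is exactly condition (i) of Definition \ref{def:LCAd} read with $a(\partial)=a$, together with the fact that $\theta(u)_\lambda\in\CDer(A)$. Symmetrically, for $a\in A$ one defines $\{a_\lambda\cdot\}$ as the conformal derivation extending $0$ on $A$ and $v\mapsto-\theta(v)^*_\lambda(a)$ on $E$; here well-definedness uses that $\theta$ and the isomorphism $*$ of \eqref{eq:phi-dual} are $A[\partial]$-module maps, so that $\theta(av)^*=a\,\theta(v)^*$ and $(\partial\theta(v))^*_\lambda=(\lambda+\partial)\theta(v)^*_\lambda$. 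Finally, one extends the bracket to an arbitrary first argument by the right Leibniz rule; the compatibility of this first-slot extension with the second-slot extension above is precisely the content of conditions (i) and (i'), equivalently the total formula \eqref{eq:tot-form}. All generator values in \eqref{eq:lambda-vvv} are polynomial in $\lambda$, and the Leibniz rules preserve polynomiality, so no finiteness assumption on $A$ or $E$ is needed.

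For (2), sesquilinearity (LCA-i) follows at once on generators from sesquilinearity of $[\cdot\,_\lambda\,\cdot]$ on $E$, from $\theta(u)_\lambda(\partial a)=(\partial+\lambda)\theta(u)_\lambda(a)$, and from the identity $(\partial\phi)^*_\lambda=(\lambda+\partial)\circ\phi^*_\lambda$ for the remaining generator brackets; it then propagates to all of $S_A(E)$ since the Leibniz rules commute with $\partial$. For skewsymmetry (LCA-ii) and the Jacobi identity (LCA-iii) I would invoke the standard reduction used in the construction of $S(R)$ for an LCA $R$: the skewsymmetry defect $\{f_\lambda g\}+\big(\big|_{x=\partial}\{g_{-\lambda-x}f\}\big)$ is a conformal biderivation in $(f,g)$, and the Jacobiator $J_{\lambda,\mu}(f,g,h)$ (cf. \eqref{eq:Jac}) is a conformal triderivation in $(f,g,h)$; each therefore vanishes identically as soon as it vanishes on the generating set $A\cup E$, in the same spirit as Lemma \ref{lem:gener}(b),(d) for LCAd.

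It remains to verify skewsymmetry and Jacobi on generators. Skewsymmetry is trivial for two elements of $A$ (both sides vanish), is (LCA-ii) of $E$ for two elements of $E$, and for the mixed pair it holds precisely because we set $\{a_\lambda u\}=-\theta(u)^*_\lambda(a)$: using $\phi^{**}=\phi$, one gets $-\big(\big|_{x=\partial}\{a_{-\lambda-x}u\}\big)=\big(\big|_{x=\partial}\theta(u)^{*}_{-\lambda-x}(a)\big)=\theta(u)_\lambda(a)=\{u_\lambda a\}$. For the Jacobi identity, the case of three elements of $E$ is (LCA-iii) of $E$; any triple containing at least two elements of $A$ gives $0$ since $\{a_\lambda b\}=0$; and the crucial mixed case of two elements $u,v\in E$ and one $a\in A$ reads $\theta(u)_\lambda(\theta(v)_\mu(a))-\theta(v)_\mu(\theta(u)_\lambda(a))=\theta([u_\lambda v])_{\lambda+\mu}(a)$, which is exactly condition (ii) of Definition \ref{def:LCAd}. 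I expect the main obstacle to be the well-definedness step (1) rather than the axiom checks: unlike the free (differential polynomial) setting, here one must track the $A$-module relations of $S_A(E)$ through the $*$-operation bookkeeping, and it is precisely the simultaneous validity of conditions (i), (i') and (ii) that makes the two Leibniz extensions agree and the PVA axioms close up.
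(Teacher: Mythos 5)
Your proposal is correct and follows essentially the same route as the paper: both reduce well-definedness of the two Leibniz extensions to conditions (i), (i') and the conformal-derivation property of $\theta(u)$, and both verify skewsymmetry and Jacobi only on the generating set $A\cup E$, with the mixed triple $a,u,v$ handled by axiom (ii) of Definition \ref{def:LCAd} and the mixed skewsymmetry built into the choice $\{a_\lambda u\}=-\theta(u)^*_\lambda(a)$. You merely spell out in more detail the well-definedness and the biderivation/triderivation reduction that the paper dispatches as "immediate to check".
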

\begin{proof}
The Lebniz rule for $\{u_\lambda ab\}$, as well as the right Leibniz rule for $\{ab_\lambda u\}$,
with $a,b\in A$ and $u\in E$,
agrees with the assumptions that $\theta(u)_\lambda$ is a left conformal derivation of $A$
and $\theta(u)^*_\lambda$ is a right conformal derivation of $A$,
as assumed in the Definition \ref{def:LCAd} of an LCAd.
Moreover, 
the Lebniz rule $\{u_\lambda av\}$ and the right Leibniz rule for $\{au_\lambda v\}$,
with $a\in A$ and $u,v\in E$,
agree with axioms (i) and (i') of the Definition \ref{def:LCAd} of an LCAd.
It is then immediate to check that the above formulas define a well-defined 
skewsymmetric $\lambda$-bracket on $S_A(E)$,
satisfying, by construction, the left and right Leibniz rules.
In order to prove the Jacobi identity (LCA-iii) in Section \ref{sec:2.10}, 
it then sufficies to prove that it holds on a triple of elements from $A$ or $E$.
For a triple $a,b,c\in A$, it is trivial as all three terms of the Jacobi identity vanish;
for a triple $u,v,w\in E$, it holds since $E$ is an LCA by assumption;
for a triple $a,b,u$, with $a,b\in A$ and $u\in E$, again all three terms vanish,
as $\{a_\lambda b\}=0$ by construction,
while $\{a_\lambda\{b_\mu u\}\}=-\{a_\lambda(\theta(u)^*_\mu(b))\}=0$ for the same reason,
as $\theta(u)_\lambda(b)$ has coefficients in $A$;
for a triple $a,u,v$, with $a\in A$ and $u,v\in E$, we have
\begin{align*}
& \{u_\lambda\{v_\mu a\}\}-\{v_\mu\{u_\lambda a\}\}-\{\{u_\lambda v\}_{\lambda+\mu}a\} \\
& =
\theta(u)_\lambda(\theta(v)_\mu(a))-\theta(v)_\mu(\theta(u)_\lambda(a))
-\theta([u_\lambda v])_{\lambda+\mu}(a)
=0
\,,
\end{align*}
by axiom (ii) of the Definition \ref{def:LCAd} of an LCAd.
\end{proof}

Recall also, see e.g. \cite{BDK}, that a \emph{module} $M$ over a PVA $\mc V$
is a left $\mb F[\partial]$-module endowed with
an action of the differential algebra $\mc V$, denoted by $f\cdot m$ for $f\in\mc V$ and $m\in M$,
(by this we mean that $\partial(f\cdot m)=(\partial f)\cdot m+f\cdot(\partial m)$),
and a $\lambda$-action of the LCA $\mc V$, denoted by $f_\lambda m\in M[\lambda]$,
such that the following Leibniz rules hold:
\begin{align}
& f_\lambda(g\cdot m)=\{f_\lambda g\}\cdot m+g\cdot(f_\lambda m)
\,, \label{eq:Leibniz1}\\
& 
(fg)_\lambda m
=\big(\big|_{x=\partial}f\big)\cdot(g_{\lambda+x}m)
+\big(\big|_{x=\partial}g\big)\cdot(f_{\lambda+x}m)
\,.\label{eq:Leibniz2}
\end{align}
For example, the \emph{adjoint module} is $M=\mc V$,
with the action $\cdot$ given by the commutative associative product of $\mc V$,
and the $\lambda$-action given by the $\lambda$-bracket of $\mc V$.

Recall that a \emph{derivation} from a commutative associative algebra $\mc V$ to a $\mc V$-module $M$
is a linear map $\delta:\,\mc V\to M$ satisfying ($f,g\in\mc V$):
$$
\delta(fg)=f\cdot\delta(g)+g\cdot\delta(f)
\,.
$$
Recall also from Section \ref{sec:2.15} that if $\mc V$ is a finitely generated differential algebra 
and $M$ is a finitely generated left module over $\mc V[\partial]$,
then we have the associated Gauge LCAd $\mc G(\mc V,M)$ with the LCA $\lambda$-bracket \eqref{eq:gauge-lambda}
and the anchor map \eqref{eq:gauge-anchor}.

The following statement is the analogue of Proposition \ref{prop:LCAd-mod} for PVA-modules.
\begin{proposition}\label{prop:PVAmod}
Let $\mc V$ be a PVA
and let $M$ be a left module over the associative algebra $\mc V[\partial]$.
Assume that $\mc V$ is finitely generated as a differential algebra,
and that $M$ is finitely generated as a $\mc V[\partial]$-module.
Then, 
there is a bijective correspondence between:
\begin{enumerate}[(a)]
\item
the $\lambda$-actions of $\mc V$ on $M$ making $M$ a PVA module over $\mc V$;
\item
the maps $\rho:\,\mc V\to\mc G(\mc V,M)$, mapping $f\mapsto\rho(f)=(\phi(f),\sigma(f))$
with $\phi(f)\in\CEnd(M)$ and $\sigma(f):=\{f_\lambda\,\cdot\}\in\CDer(\mc V)$,
such that $\rho$ is both a derivation from $\mc V$ to $\mc G(\mc V,M)$ commuting with $\partial$, and an LCA homomorphism.
\end{enumerate}
This correspondence associates to a PVA $\lambda$-action $f_\lambda m$ of $\mc V$ on $M$
the map $\rho=(\phi,\sigma)$ given by ($f\in\mc V$, $m\in M$)
\begin{equation}\label{eq:action3}
\phi(f)_\lambda(m)=f_\lambda m
\,.
\end{equation}
\end{proposition}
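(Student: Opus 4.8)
The plan is to proceed by a direct dictionary between the PVA-module axioms for the $\lambda$-action and the defining properties of the map $\rho$, exactly in the spirit of the proof of Proposition \ref{prop:LCAd-mod}. Starting from a PVA-module structure, I set $\phi(f)_\lambda(m)=f_\lambda m$ as in \eqref{eq:action3} and $\sigma(f)=\{f_\lambda\,\cdot\,\}$. Two facts about the second component are immediate and independent of $M$: first, $\sigma(f)\in\CDer(\mc V)$ by the Leibniz rule \eqref{eq:leib} and sesquilinearity of the PVA $\lambda$-bracket (as already observed in Example \ref{exa:Btilde}); second, the map $f\mapsto\sigma(f)$ is automatically a derivation into $\CDer(\mc V)$ commuting with $\partial$ and an LCA homomorphism, since these three statements unwind, via the $\mc V[\partial]$-action \eqref{eq:lcder-mod} and the bracket \eqref{eq:lcder-lambda}, precisely to the right Leibniz rule \eqref{eq:leibr}, to sesquilinearity, and to the Jacobi identity of $\mc V$. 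Hence all the conditions imposed on $\rho$ reduce to conditions on its first component $\phi$.

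The core of the argument is then the following dictionary, obtained by expanding the $\mc V[\partial]$-module structure \eqref{eq:gauge-Ad} and the $\lambda$-bracket \eqref{eq:gauge-lambda} of $\mc G(\mc V,M)$ componentwise. Sesquilinearity $f_\lambda(\partial m)=(\partial+\lambda)(f_\lambda m)$ of the $\lambda$-action is exactly the statement that $\phi(f)\in\CEnd(M)$, while sesquilinearity $(\partial f)_\lambda m=-\lambda(f_\lambda m)$ is exactly $\phi(\partial f)=\partial\,\phi(f)$, so that, together with the automatic $\sigma(\partial f)=\partial\,\sigma(f)$, the map $\rho$ commutes with $\partial$. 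The Leibniz rule \eqref{eq:Leibniz1} is exactly the degree-zero instance (i.e.\ for $a\in\mc V$) of the membership condition \eqref{eq:derM} for the pair $(\phi(f),\sigma(f))$. The Leibniz rule \eqref{eq:Leibniz2} is precisely the first component of the derivation identity $\rho(fg)=f\cdot\rho(g)+g\cdot\rho(f)$, its second component being the already-established \eqref{eq:leibr}. Finally, the LCA-module Jacobi identity $\{f_\lambda g\}_\mu m=f_\lambda(g_{\mu-\lambda}m)-g_{\mu-\lambda}(f_\lambda m)$ is, by \eqref{eq:gauge-lambda1}, the first component of the LCA-homomorphism property $\rho(\{f_\lambda g\})=[\rho(f)_\lambda\rho(g)]$, its second component being the Jacobi identity of $\mc V$.

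Reading this dictionary in reverse gives the converse: given $\rho=(\phi,\sigma)$ with $\sigma(f)=\{f_\lambda\,\cdot\,\}$ satisfying the three listed properties, one defines $f_\lambda m=\phi(f)_\lambda(m)$ and recovers each PVA-module axiom from the corresponding property of $\rho$, the associative $\mc V$-action on $M$ being the given, fixed $\mc V[\partial]$-module structure. I expect the only genuinely computational step to be upgrading the degree-zero identity \eqref{eq:Leibniz1} to the full membership condition \eqref{eq:derM} for an arbitrary differential operator $a(\partial)\in\mc V[\partial]$: this requires propagating \eqref{eq:Leibniz1} through the powers of $\partial$ using sesquilinearity, a bookkeeping computation of exactly the type already carried out in the proof of Proposition \ref{prop:gauge}. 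Everything else is the formal translation above, and the finite-generation hypotheses on $\mc V$ and $M$ enter only to guarantee that the gauge bracket \eqref{eq:gauge-lambda} is polynomial in $\lambda$, so that $\mc G(\mc V,M)$ is an honest LCA and $\rho$ an honest LCA homomorphism.
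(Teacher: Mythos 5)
Your proposal is correct and follows essentially the same route as the paper's proof: a term-by-term dictionary identifying each PVA-module axiom with the corresponding defining property of $\rho=(\phi,\sigma)$ (sesquilinearity with $\phi(f)\in\CEnd(M)$ and commutation with $\partial$, the Leibniz rules \eqref{eq:Leibniz1}--\eqref{eq:Leibniz2} with membership in $\mc G(\mc V,M)$ and the derivation property, and the Jacobi identities with the LCA-homomorphism property), read in both directions. Your explicit remark on upgrading the degree-zero case of \eqref{eq:derM} to general $a(\partial)\in\mc V[\partial]$ via sesquilinearity is a point the paper passes over more briefly, but it is the same argument.
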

\begin{proof}
In terms of the map $\phi$ in \eqref{eq:action3}, the sesquilinearity condition $f_\lambda(\partial m)=(\lambda+\partial)(f_\lambda m)$, $f\in\mc V,m\in M$, 
of the $\lambda$-action  of $\mc V$ on $M$, means that $\phi(f)\in\CEnd(M)$. 
Furthermore, using the Leibniz rule \eqref{eq:Leibniz1} and PVA axioms, one shows that the pair $(\phi(f),\sigma(f))\in \mc G(\mc V,M)$, cf. \eqref{eq:derM}.
Recalling the left $\mc V[\partial]$-module structure of $\CEnd(M)$ given by \eqref{eq:lcder-mod}, in terms of the map $\phi$ in \eqref{eq:action3}, 
the Leibniz rule \eqref{eq:Leibniz2} means that $\phi(fg)=f\phi(g)+g\phi(f)$, for every $f,g\in\mc V$. Similarly, using the PVA axioms 
we have that $\sigma(fg)=f\sigma(g)+g\sigma(f)$. 
Hence, the map $\rho=(\phi,\sigma)$ is a derivation from $\mc V$ to $\mc G(\mc V,M)$. 
Moreover, the sesquilinearity condition $(\partial f)_\lambda m=-\lambda f_{\lambda}m$ and equation \eqref{eq:lcder-mod} 
imply that $\phi$ commutes with $\partial$. Similarly, the PVA sesquilinearity implies that also $\sigma$ commutes with $\partial$. 
Thus the map $\rho=(\phi,\sigma)$ commutes with $\partial$.
Finally, the fact that $\rho$ is an LCA homomorphism follows by the Jacobi identity of the $\lambda$-action of $\mc V$ on $M$ 
and by the Jacobi identity of the PVA $\mc V$.
Conversely, given a map $\rho$ as in part (b), the same arguments as above show that \eqref{eq:action3},
read from right to left, defines a $\lambda$-action of $\mc V$ on $M$.
\end{proof}
\begin{remark}\label{20251103:rem1}
If we remove the finitely generatedness assumptions the same statement still holds,
except that $\mc G(\mc V,M)$ is not, strictly speaking, an LCA as the $\lambda$-bracket may have values in power series,
hence the map $\rho$, which maps the $\lambda$-bracket of $\mc V$ to the $\lambda$-bracket of $\mc G(\mc V,M)$,
will not be, strictly speaking, an LCA homomorphism,
cf. Remark \ref{rem:LCAd-mod}.
\end{remark}

\subsection{Space of K\"{a}hler differentials of a commutative associative algebra 
and of a differential algebra}\label{sec:5.2b}

Let $A$ be a commutative associative algebra.
Recall that \cite{J69,H77}
the \emph{space of K\"{a}hler differentals} $\Omega(A)$ of $A$
is the unique $A$-module endowed with a derivation $d:\,A\to\Omega(A)$
satisfying the following universal property:
for every $A$-module $M$ and every derivation $\delta:\,A\to M$,
there exists a unique $A$-module homomorphism 
$\widetilde{\delta}:\,\Omega(A)\to M$ such that the following diagram is commutative:
\begin{equation}\label{eq:diagram-delta}
\xymatrix{
\Omega(A) \ar@{.>}@/^1pc/[dr]^{\exists!\,\,\widetilde{\delta}} \\
A \ar[r]^{\forall\,\,\delta} \ar[u]^{d} & M 
}
\end{equation}
This universal property can be expressed by the isomorphism of vector spaces,
associated to an $A$-module $M$,
\begin{equation}\label{eq:univ-Kahlera}
\Der(A,M)
\simeq
\Hom_A(\Omega(A),M)
\,,
\end{equation}
where $\Der(A,M)$ denotes the space of all derivations $\delta:\, A\to M$,
while $\Hom_A(\Omega(A),M)$ denotes the space of all $A$-module homomorphisms
$\widetilde{\delta}:\,\Omega(A)\to M$.

The space $\Omega(A)$ of K\"{a}hler differentials can be constructed explicitly 
as the free $A$-module generated by the elements $da,\,a\in A$,
quotient by the $A$-submodule generated by the relations
($k\in\mb F,\,a,b\in A$):
\begin{equation}\label{eq:defOmega}
d(ka)-kd(a)
\,,\quad
d(a+b)-d(a)-d(b)
\,,\quad
d(ab)-ad(b)-bd(a)
\,.
\end{equation}

There is yet a third construction of the space of K\"{a}hler differentials:
one can prove that there is an isomorphism of $A$-modules
$\Omega(A)\simeq I/I^2$, where $I$ is the kernel of the multiplication map $A\otimes A\to A$,
and $I^2$ is the submodule generated by products $ij$, for $i,j\in I$.
Under this isomorphism, the differential map $d:\,A\to\Omega(A)$ maps
$a\mapsto [a\otimes1-1\otimes a]_{I^2}$.

For example, if $A=\mb F[x_1,\dots,x_n]$ is the algebra of polynomials in $n$ variables,
then $\Omega(A)=\bigoplus_{i=1}^n\mb F[x_1,\dots,x_n]dx_i$,
with the differential map $d$ 
given by 
$$
d(p)=\sum_{i=1}^n\frac{\partial p}{\partial x_i}dx_i
\,.
$$

If $A$ is a differential algebra with derivation $\partial$,
then the space of K\"{a}hler differentials $\Omega(A)$ is endowed with the structure 
of a left $A[\partial]$-module, with the action of $\partial$ on $\Omega(A)$
uniquely determined by the commutativity of the diagram
$$
\xymatrix{
\Omega(A) \ar[r]^{\partial} & \Omega(A) \\
A \ar[r]^{\partial} \ar[u]^{d} & A \ar[u]^{d}
}
$$
In particular, we have, for $a,b\in A$,
\begin{equation}\label{eq:kahler-Ad}
\partial(ad(b))=(\partial a)d(b)+ad(\partial b)
\,.
\end{equation}

For example, consider the algebra of differential polynomials in $m$ variables
$A=\mb F\big[x_i^{(n)},\,i=1,\dots,m,\,n\in\mb Z_+\big]$.
Then $\Omega(A)=\bigoplus_{1\leq j\leq m ,\, \ell\in\mb Z_+}
\mb F\big[x_i^{(n)},\,i=1,\dots,m,\,n\in\mb Z_+\big]dx_j^{(\ell)}$,
with the differential map $d$ 
given by 
$$
d(p)=\sum_{i=1}^m\sum_{n=0}^\infty\frac{\partial p}{\partial x_i^{(n)}}dx_i^{(n)}
\,.
$$
In this case the derivation $\partial$ acts on the space $\Omega(A)$ by
$$
\partial\Big(\sum_{i=1}^m\sum_{n=0}^\infty f_{i,n} dx_i^{(n)}\Big)
=
\sum_{i=1}^m\sum_{n=0}^\infty (\partial f_{i,n}) dx_i^{(n)}
+
\sum_{i=1}^m\sum_{n=0}^\infty f_{i,n} dx_i^{(n+1)}
\,.
$$

In the case when $A$ is a differential algebra,
there are two different versions of the universal property \eqref{eq:univ-Kahlera},
one in terms of derivations commuting with $\partial$, stated in Proposition \ref{prop:univ-prop1}, 
another in terms of conformal derivations, stated in Proposition \ref{prop:univ-prop2}.
\begin{proposition}\label{prop:univ-prop1}
For every left $A[\partial]$-module $M$ 
and every derivation $\delta:\,A\to M$ commuting with $\partial$,
there exists a unique homomorphism of left $A[\partial]$-modules
$\widetilde{\delta}:\,\Omega(A)\to M$ making the diagram \eqref{eq:diagram-delta} commutative.
Equivalently, there is a canonical vector space isomorphism,
associated to a left $A[\partial]$-module $M$,
\begin{equation}\label{eq:univ-Kahler-d}
\Der^\partial(A,M)
\simeq
\Hom_{A[\partial]}(\Omega(A),M)
\,,
\end{equation}
where $\Der^\partial(A,M)\subset\Der(A,M)$ is the subspace of derivations from $A$ to $M$ commuting with $\partial$, 
while $\Hom_{A[\partial]}(\Omega(A),M)\subset\Hom_A(\Omega(A),M)$
is the subspace of left $A[\partial]$-module homomorphisms from $\Omega(A)$ to $M$.
\end{proposition}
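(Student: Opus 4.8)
The plan is to bootstrap from the ordinary universal property \eqref{eq:univ-Kahlera} of K\"ahler differentials and then check compatibility with $\partial$. First I would regard $M$ merely as an $A$-module, forgetting the action of $\partial$, and regard $\delta$ as an ordinary derivation $A\to M$. The classical universal property expressed by the diagram \eqref{eq:diagram-delta} then produces a \emph{unique} $A$-module homomorphism $\widetilde{\delta}:\,\Omega(A)\to M$ with $\widetilde{\delta}\circ d=\delta$. The entire content of the proposition is therefore to show that, when $\delta$ commutes with $\partial$, this $\widetilde{\delta}$ is automatically $\partial$-equivariant, hence a homomorphism of left $A[\partial]$-modules, and that it remains unique as such.

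Second I would verify the $\partial$-equivariance of $\widetilde{\delta}$ on the generators $a\,d(b)$ of $\Omega(A)$, for $a,b\in A$. Using the explicit action \eqref{eq:kahler-Ad}, the $A$-linearity of $\widetilde{\delta}$, the identity $\widetilde{\delta}\circ d=\delta$, the hypothesis $\delta\circ\partial=\partial\circ\delta$, and the Leibniz rule $\partial(a\cdot m)=(\partial a)\cdot m+a\cdot\partial m$ valid in any left $A[\partial]$-module, one computes
\begin{align*}
\widetilde{\delta}\big(\partial(a\,d(b))\big)
&=\widetilde{\delta}\big((\partial a)d(b)+a\,d(\partial b)\big)
=(\partial a)\cdot\delta(b)+a\cdot\delta(\partial b) \\
&=(\partial a)\cdot\delta(b)+a\cdot\partial(\delta(b))
=\partial\big(a\cdot\delta(b)\big)
=\partial\big(\widetilde{\delta}(a\,d(b))\big)
\,.
\end{align*}
Since such elements span $\Omega(A)$ as a vector space and both $\widetilde{\delta}\circ\partial$ and $\partial\circ\widetilde{\delta}$ are additive, the equality $\widetilde{\delta}\circ\partial=\partial\circ\widetilde{\delta}$ holds on all of $\Omega(A)$, so $\widetilde{\delta}$ is a homomorphism of left $A[\partial]$-modules.

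For uniqueness I would observe that any left $A[\partial]$-module homomorphism $\psi:\,\Omega(A)\to M$ with $\psi\circ d=\delta$ is in particular an $A$-module homomorphism with the same property, hence coincides with $\widetilde{\delta}$ by the uniqueness clause already furnished by the classical universal property. Finally, to obtain the isomorphism \eqref{eq:univ-Kahler-d}, I would exhibit the inverse of $\delta\mapsto\widetilde{\delta}$, namely the map sending $\psi\in\Hom_{A[\partial]}(\Omega(A),M)$ to $\psi\circ d$. This composite is a derivation since $d$ is and $\psi$ is $A$-linear, and it commutes with $\partial$ because $d\circ\partial=\partial\circ d$ by the very definition of the $A[\partial]$-structure on $\Omega(A)$ while $\psi\circ\partial=\partial\circ\psi$ by assumption; thus it lands in $\Der^\partial(A,M)$. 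The two assignments are mutually inverse by the universal property.

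I do not expect a genuine obstacle here: the only nontrivial point is the $\partial$-equivariance in the second step, and even that reduces to the one-line computation on generators displayed above once the correct Leibniz rules are combined. The one thing to be careful about is to invoke the $A$-linearity of $\widetilde{\delta}$ (rather than its $A[\partial]$-linearity, which is what is being proved) when splitting $\widetilde{\delta}(a\,d(b))=a\cdot\widetilde{\delta}(d(b))$, and to use the hypothesis $\delta\circ\partial=\partial\circ\delta$ precisely at the step where the action of $\partial$ is transported across $\widetilde{\delta}$.
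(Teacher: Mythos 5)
Your proof is correct and takes essentially the same route as the paper: both arguments bootstrap from the classical universal property \eqref{eq:univ-Kahlera} and then transport the action of $\partial$ across the correspondence $\delta\mapsto\widetilde{\delta}$, the key point being exactly your one-line computation on the generators $a\,d(b)$. The only difference is packaging: the paper endows both $\Der(A,M)$ and $\Hom_A(\Omega(A),M)$ with the adjoint action of $\partial$, notes that the classical isomorphism is $\mb F[\partial]$-equivariant (the verification being precisely your generator computation), and obtains \eqref{eq:univ-Kahler-d} by restricting to the kernels of $\partial$ on both sides, whereas you check the statement directly on elements of $\Der^\partial(A,M)$ and exhibit the inverse $\psi\mapsto\psi\circ d$ explicitly.
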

\begin{proof}
When $A$ is a differential algebra,
both $\Der(A,M)$ and $\Hom_A(\Omega(A),M)$ are $\mb F[\partial]$-modules,
with the adjoint action of $\partial$:
$$
(\partial\delta)(a)=\partial(\delta(a))-\delta(\partial(a))
\,,\qquad
(\partial\phi)(u)=\partial(\phi(u))-\phi(\partial(u))
\,,
$$
for $\delta\in\Der(A,M)$, $\phi\in\Hom_A(\Omega(A),M)$, $a\in A$ and $u\in\Omega(A)$.
Then, it is immediate to check that the isomorphism \eqref{eq:univ-Kahlera}, mapping $\delta\mapsto\widetilde{\delta}$,
is an $\mb F[\partial]$-module isomorphism.
As a corollary, it restricts to the isomorphism \eqref{eq:univ-Kahler-d} of the kernels of the action of $\partial$
on both spaces.
\end{proof}

Recall that a conformal derivation from $A$ to a left $A[\partial]$-module $M$ is a linear map
$\delta_\lambda:\,A\to M[\lambda]$ such that \eqref{eq:lcder} holds.
Recall also that a right conformal derivation from $A$ to a left $A[\partial]$-module $M$ is a linear map
$\eta_\lambda:\,A\to M[\lambda]$ satisfying \eqref{eq:rcder}.
Then, the space of K\"{a}hler differentials $\Omega(A)$
satisfies the following universal properties.
\begin{proposition}\label{prop:univ-prop2}
\begin{enumerate}[(a)]
\item
For every left $A[\partial]$-module $M$ 
and every left conformal derivation $\delta_\lambda:\,A\to M[\lambda]$,
there exists a unique conformal homomorphism of left $A[\partial]$-modules
$\widetilde{\delta}_\lambda:\,\Omega(A)\to M[\lambda]$ such that the following diagram is commutative:
$$
\xymatrix{
\Omega(A) \ar@{.>}@/^1pc/[dr]^{\exists!\,\,\widetilde{\delta}_\lambda} \\
A \ar[r]^{\forall\,\delta_\lambda} \ar[u]^{d} & M[\lambda] 
}
$$
Equivalently, we have a left $A[\partial]$-module isomorphism,
associated to a left $A[\partial]$-module $M$,
\begin{equation}\label{eq:univ-Kahlerb}
\CDer(A,M)
\simeq
\CHom_A(\Omega(A),M)
\,,
\end{equation}
where $\CDer(A,M)$ and $\CHom_A(\Omega(A),M)$
were defined in Section \ref{sec:2.10}.
\item
Similarly, for every left $A[\partial]$-module $M$ 
and every right conformal derivation $\eta_\lambda:\,A\to M[\lambda]$,
there exists a unique right conformal homomorphism of left $A[\partial]$-modules
$\widetilde{\eta}_\lambda:\,\Omega(A)\to M[\lambda]$ such that the following diagram is commutative:
$$
\xymatrix{
\Omega(A) \ar@{.>}@/^1pc/[dr]^{\exists!\,\,\widetilde{\eta}_\lambda} \\
A \ar[r]^{\forall\,\eta_\lambda} \ar[u]^{d} & M[\lambda] 
}
$$
Equivalently, we have a left $A[\partial]$-module isomorphism,
associated to a left $A[\partial]$-module $M$,
\begin{equation}\label{eq:univ-Kahlerc}
\RCDer(A,M)
\simeq
\RCHom_A(\Omega(A),M)
\,,
\end{equation}
where $\RCDer(A,M)$ and $\RCHom_A(\Omega(A),M)$ were defined in Section \ref{sec:2.10}.
\end{enumerate}
\end{proposition}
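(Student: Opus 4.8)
The plan is to reduce both statements to the classical universal property \eqref{eq:univ-Kahlera} of Kähler differentials, by observing that a conformal derivation carries two independent pieces of data: an ordinary derivation (its Leibniz part) and a sesquilinearity relation with $\partial$. First I would unravel \eqref{eq:lcder}: taking $a(\partial)=a\in A$ constant shows that a left conformal derivation $\delta_\lambda:A\to M[\lambda]$ satisfies the ordinary Leibniz rule $\delta_\lambda(ab)=a\delta_\lambda(b)+b\delta_\lambda(a)$, where $A$ acts on $M[\lambda]$ coefficient-wise; that is, $\delta_\lambda$ is exactly an element of $\Der(A,M[\lambda])$, the space of ordinary derivations into the $A$-module $M[\lambda]$. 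Taking instead $a(\partial)=\partial$ yields the sesquilinearity relation $\delta_\lambda(\partial a)=(\partial+\lambda)\delta_\lambda(a)$, and since $A[\partial]$ is generated by $A$ and $\partial$ these two relations are together equivalent to \eqref{eq:lcder}. Applying \eqref{eq:univ-Kahlera} to the $A$-module $M[\lambda]$ then produces a unique $A$-module homomorphism $\widetilde\delta_\lambda:\Omega(A)\to M[\lambda]$ with $\widetilde\delta_\lambda\circ d=\delta_\lambda$.

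It then remains to promote this $A$-linear map to a conformal homomorphism of $A[\partial]$-modules, i.e. to verify $\widetilde\delta_\lambda(\partial\omega)=(\partial+\lambda)\widetilde\delta_\lambda(\omega)$. Since $\Omega(A)$ is spanned by the elements $a\,d(b)$ and both sides are additive, it suffices to check this on such elements, using $\partial(a\,d(b))=(\partial a)d(b)+a\,d(\partial b)$ from \eqref{eq:kahler-Ad}, the $A$-linearity of $\widetilde\delta_\lambda$, the identity $\widetilde\delta_\lambda\circ d=\delta_\lambda$, and the sesquilinearity of $\delta$; the two sides match because $M$ is an $A[\partial]$-module, so that $\partial(a\,m)=(\partial a)m+a(\partial m)$. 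Finally I would check that the bijection $\delta\leftrightarrow\widetilde\delta$ intertwines the $A[\partial]$-actions \eqref{eq:lcder-mod} on $\CDer(A,M)$ and $\CHom_A(\Omega(A),M)$; this is immediate by evaluating on the generators $d(b)$, where $\widetilde\delta_\lambda(d(b))=\delta_\lambda(b)$, establishing \eqref{eq:univ-Kahlerb}. I expect the only genuine bookkeeping obstacle to be keeping the coefficient-wise $A$-action and the sesquilinear $\partial$-action cleanly separated, so that the classical result applies verbatim to the former while the latter is dispatched by hand.

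For part (b) I would invoke the duality $*$ of Section \ref{sec:2.10}, which gives isomorphisms $\CDer(A,M)\stackrel{\sim}{\to}\RCDer(A,M)$ and $\CHom_A(\Omega(A),M)\stackrel{\sim}{\to}\RCHom_A(\Omega(A),M)$, both defined by \eqref{eq:phi-dual} and squaring to the identity. Given a right conformal derivation $\eta$, set $\delta=\eta^*\in\CDer(A,M)$, apply part (a) to obtain $\widetilde\delta\in\CHom_A(\Omega(A),M)$, and define $\widetilde\eta=(\widetilde\delta)^*\in\RCHom_A(\Omega(A),M)$. The required identity $\widetilde\eta\circ d=\eta$ follows from $(\psi\circ d)^*=\psi^*\circ d$, which holds because $d(a)$ is independent of $\lambda$, so the substitution $|_{x=\partial}$ in \eqref{eq:phi-dual} touches only the $\psi$-factor; combined with the involutivity of $*$ this gives $\widetilde\eta\circ d=(\widetilde\delta\circ d)^*=\delta^*=\eta$. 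Uniqueness of $\widetilde\eta$, and the fact that the resulting bijection is an isomorphism of left $A[\partial]$-modules, then transport along $*$ directly from part (a), completing the proof.
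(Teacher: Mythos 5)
Your proposal is correct and follows essentially the same route as the paper: both reduce to the classical universal property \eqref{eq:univ-Kahlera} by noting that a conformal derivation is in particular an ordinary derivation into $M[\lambda]$, then verify the conformal compatibility of $\widetilde{\delta}_\lambda$ on the generators $a\,d(b)$ (the paper does this in a single computation on $p(\partial)(a\,d(b))$, you split it into $A$-linearity plus $\partial$-sesquilinearity, which amounts to the same thing), and both obtain part (b) by transporting part (a) along the duality $\phi\mapsto\phi^*$ of \eqref{eq:phi-dual}. Your explicit check that $(\widetilde{\delta})^*\circ d=\delta^*$ is a detail the paper leaves implicit, but it is the right justification.
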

\begin{proof}
If $\delta_\lambda:\,A\to M[\lambda]$ is a conformal derivation,
then in particular it is a derivation;
hence, by the universal property of $\Omega(A)$ there is a homomorphism
of $A$-modules $\widetilde{\delta}_\lambda:\,A\to M[\lambda]$
such that $\widetilde{\delta}_\lambda\circ d=\delta_\lambda$.
In order to prove part (a), we only need to show that,
if $\delta_\lambda$ is a conformal linear map
then $\widetilde{\delta}_\lambda\in\CHom_A(\Omega(A),M)$, cf. Section \ref{sec:2.10}.
Indeed, we have
\begin{align*}
& \widetilde{\delta}_\lambda(p(\partial)(ad(b)))
=
\widetilde{\delta}_\lambda\Big(p(x+y)\big(\big|_{x=\partial}a\big)d\big(\big|_{y=\partial}b\big)\Big)
=
p(x+y)\big(\big|_{x=\partial}a\big)
\delta_\lambda\big(\big|_{y=\partial}b\big) \\
& =
p(x+y)\big(\big|_{x=\partial}a\big)
\big(\big|_{y=\lambda+\partial}\delta_\lambda(b)\big)
=
p(\lambda+\partial)\big(a\delta_\lambda(b)\big)
=
p(\lambda+\partial)
\widetilde{\delta}_\lambda(ad(b))
\,.
\end{align*}
Claim (b) follows from part (a).
Indeed, if $\eta_\lambda:\,A\to M[\lambda]$ is a right conformal derivation,
then $\eta_\lambda^*:\,A\to M[\lambda]$
is a left conformal derivation, hence by part (a)
there exists a unique $\widetilde{\eta^*}_\lambda\in\CHom_A(\Omega(A),M)$
such that $\widetilde{\eta^*}_\lambda\circ d=\eta^*_\lambda$,
and then, applying the map \eqref{eq:phi-dual},
we obtain $\widetilde{\eta_\lambda}=(\widetilde{\eta^*})^*_\lambda$, 
which is a right conformal homomorphism of left $A[\partial]$-modules
from $\Omega(A)$ to $M$, as required.
\end{proof}

\subsection{LCAd structure on the space of K\"{a}hler differentials of a PVA}\label{sec:5.2a}

\begin{theorem}\label{thm:kahlerLCAd}
If $\mc V$ is a PVA, then the space of K\"{a}hler differentials $\Omega(\mc V)$
has a natural structure of an LCAd,
with the left $\mc V[\partial]$-module structure given by \eqref{eq:kahler-Ad},
the anchor map $\theta:\,\Omega(\mc V)\to\CDer(\mc V)$ defined on the generators via
\begin{equation}\label{eq:kahler-theta}
\theta(df)_\lambda(g)=\{f_\lambda g\}
\,,
\end{equation}
and uniquely extended as a left $\mc V[\partial]$-module homomorphism $\Omega(\mc V)\to\CDer(\mc V)$,
and the $\lambda$-bracket $[\cdot\,_\lambda\,\cdot]:\,\Omega(\mc V)^{\otimes2}\to\Omega(\mc V)[\lambda]$
defined on the generators via
\begin{equation}\label{eq:kahler-lambda}
[df_\lambda dg]=d\big(\{f_\lambda g\}\big)
\,,
\end{equation}
and uniquely extended to a $\lambda$-bracket of $\Omega(\mc V)$ satisfying
the axioms (i) and (i') of the Definition \ref{def:LCAd} of an LCAd.
\end{theorem}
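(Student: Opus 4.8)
The plan is to construct the anchor map and the $\lambda$-bracket separately: in each case I would first specify them on the generators $df$, then invoke the universal property of K\"ahler differentials to extend them, and finally verify the LCAd axioms on generators and propagate them by Lemma~\ref{lem:gener}. First I would build the anchor. Consider the assignment $D\colon\mc V\to\CDer(\mc V)$, $f\mapsto\{f_\lambda\,\cdot\}$. The left Leibniz rule \eqref{eq:leib} together with the sesquilinearity of the PVA bracket show that $\{f_\lambda\,\cdot\}$ is a left conformal derivation of $\mc V$, so $D$ takes values in $\CDer(\mc V)$; moreover $D(\partial f)=\{\partial f_\lambda\,\cdot\}=-\lambda\{f_\lambda\,\cdot\}=\partial D(f)$ for the $\mc V[\partial]$-action on $\CDer(\mc V)$, so $D$ commutes with $\partial$. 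The crucial point is that $D$ is a derivation into the $\mc V[\partial]$-module $\CDer(\mc V)$: unwinding the module action \eqref{eq:lcder-mod} on degree-zero elements, the identity $D(fg)=f\cdot D(g)+g\cdot D(f)$ is precisely the right Leibniz rule \eqref{eq:leibr}. Thus $D\in\Der^\partial(\mc V,\CDer(\mc V))$, and Proposition~\ref{prop:univ-prop1} produces a unique $\mc V[\partial]$-module homomorphism $\theta\colon\Omega(\mc V)\to\CDer(\mc V)$ with $\theta(df)=D(f)$, which is \eqref{eq:kahler-theta}.

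Next I would define the bracket. On generators I set $[df_\lambda dg]=d\{f_\lambda g\}$ as in \eqref{eq:kahler-lambda}, extend $[df_\lambda\,\cdot\,]$ to all of $\Omega(\mc V)$ by imposing axiom~(i) of Definition~\ref{def:LCAd}, and then extend in the first argument through (i$'$); equivalently, I use the total formula \eqref{eq:tot-form}. The main obstacle is \emph{well-definedness}, namely compatibility with the defining relation $d(gh)=g\,dh+h\,dg$ of \eqref{eq:defOmega}. In the second slot this follows from a direct check: axiom~(i) gives $[df_\lambda g\,dh]+[df_\lambda h\,dg]=g\,d\{f_\lambda h\}+h\,d\{f_\lambda g\}+\{f_\lambda g\}dh+\{f_\lambda h\}dg$, while the generator formula combined with the left Leibniz rule \eqref{eq:leib} and the Leibniz rule for $d$ produce exactly the same expression for $[df_\lambda d(gh)]$; hence $[df_\lambda\,\cdot\,]$ descends to $\Omega(\mc V)$. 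The analogous compatibility in the first argument, handled through (i$'$), reduces in the same fashion to the right Leibniz rule \eqref{eq:leibr}. This shows that \eqref{eq:tot-form} yields a well-defined $\lambda$-bracket on $\Omega(\mc V)$ for which (i) and (i$'$) hold by construction.

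Finally I would verify the remaining LCAd axioms on the generating set $\{df\}$ and conclude by Lemma~\ref{lem:gener}. Each check collapses to a PVA identity because $d$ is an algebra derivation commuting with $\partial$: skewsymmetry $[dg_\lambda df]=d\{g_\lambda f\}=-d\big(\big|_{x=\partial}\{f_{-\lambda-x}g\}\big)$ is the image under $d$ of the PVA skewsymmetry; the anchor compatibility (ii), namely $\theta([df_\lambda dg])_\mu(h)=\{\{f_\lambda g\}_\mu h\}=\{f_\lambda\{g_{\mu-\lambda}h\}\}-\{g_{\mu-\lambda}\{f_\lambda h\}\}$, is the PVA Jacobi identity; and the Jacobi identity for the bracket, $[df_\lambda[dg_\mu dh]]-[dg_\mu[df_\lambda dh]]-[[df_\lambda dg]_{\lambda+\mu}dh]=d\big(\{f_\lambda\{g_\mu h\}\}-\{g_\mu\{f_\lambda h\}\}-\{\{f_\lambda g\}_{\lambda+\mu}h\}\big)=0$, is once more the PVA Jacobi identity pushed through $d$. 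Since (i) and (i$'$) already hold throughout $\Omega(\mc V)$ and all axioms hold on the $\mc V[\partial]$-module generators $\{df\}$, Lemma~\ref{lem:gener} extends skewsymmetry, (ii), and the Jacobi identity to all of $\Omega(\mc V)$, which completes the proof that $\Omega(\mc V)$ is an LCAd.
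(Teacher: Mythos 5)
Your proof is correct and follows essentially the same strategy as the paper's: define the anchor map and the $\lambda$-bracket on the generators $df$, extend them by the $\mc V[\partial]$-module structure and the total formula \eqref{eq:tot-form}, reduce skewsymmetry, condition (ii), and the Jacobi identity on generators to the corresponding PVA axioms, and invoke Lemma~\ref{lem:gener}. The only difference is that you explicitly carry out the well-definedness checks (the universal property of Proposition~\ref{prop:univ-prop1} for the anchor, and compatibility of the bracket with the relation $d(gh)=g\,dh+h\,dg$ via the left and right Leibniz rules), which the paper dismisses as holding ``by construction.''
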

In fact, we can write an explicit formula for the anchor map (cf. \eqref{eq:lcder-mod})
\begin{equation}\label{eq:kahler-anchor}
\theta(adf)_\lambda(g)
=
\big(\big|_{x=\partial}a\big)
\{f_{\lambda+x} g\}
\,,
\end{equation}
and for the $\lambda$-bracket (cf. \eqref{eq:tot-form}):
\begin{equation}\label{eq:kahler-tot-form}
[ad(f)_\lambda bd(g)]
=
\big(\big|_{x=\partial}a\big)
b d(\{f_{\lambda+x}g\})
+
\big(\big|_{x=\partial}a\big) 
\{f_{\lambda+x} b\} d(g)
+
b \{a_{\lambda+x}g\}
d\big(\big|_{x=\partial}f\big)
\,.
\end{equation}
\begin{proof}
By construction, $\theta:\,\Omega(\mc V)\to\CDer(\mc V)$ is a left $\mc V[\partial]$-module homomorphism,
and the $\lambda$-bracket $[\cdot\,_\lambda\,\cdot]:\,\Omega(\mc V)^{\otimes2}\to\Omega(\mc V)[\lambda]$
satisfies conditions (i) and (i') of Definition \ref{def:LCAd}.
Then, thanks to Lemma \ref{lem:gener}, we only need to verify the 
LCAd axioms on the generators $d(f),\,f\in\mc V$, of $\Omega(\mc V)$
(as left $\mc V[\partial]$-module).
The skewsymmetry of the $\lambda$-bracket, for a pair of elements of the form $df, dg$,
with $f,g\in\mc V$, reduces to the skewsymmetry of the $\lambda$-bracket 
$\{\cdot\,_\lambda\,\cdot\}$ of $\mc V$, since $d$ and $\partial$ commute.
Condition (ii) of Definition \ref{def:LCAd}, for $\theta([df_\lambda dg])_\mu(h)$, with $f,g,h\in\mc V$,
reduces to the Jacobi identity of the $\lambda$-bracket $\{\cdot\,_\lambda\,\cdot\}$ of $\mc V$.
Moreover, the Jacobi identity of the $\lambda$-bracket, for a triple of elements of the form $df, dg, dh$,
with $f,g,h\in\mc V$, reduces again to the Jacobi identity of the $\lambda$-bracket 
$\{\cdot\,_\lambda\,\cdot\}$ of $\mc V$.
The claim follows.
\end{proof}
\begin{remark}\label{rem:kahlerPA}
A similar result holds for Poisson algebras and Lie algebroids:
if $P$ is a Poisson algebra,
then the space of K\"{a}hler differentials $\Omega(P)$ has a natural structure of a Lie algebroid, \cite{H90}.
In this case the formulas for the anchor map and for the Lie bracket are similar
to \eqref{eq:kahler-anchor} and \eqref{eq:kahler-tot-form}, except that there is no $\lambda$ (and $x$).
\end{remark}

\begin{remark}\label{rem:jetPVA}
Let $P$ be a Poisson algebra, and consider the LAd structure 
on the space $\Omega(P)$ of K\"{a}hler differentials.
Recall from Section \ref{sec:2.2b} that 
$\widehat{\Omega(P)}=\widehat{P}[\partial]\otimes_P\Omega(P)$
is an LCAd over the differential algebra $\widehat{P}=J_\infty P$.
On the other hand, the jet algebra $\widehat{P}=J_\infty P$ is naturally a PVA, \cite{A12},
and we can consider the corresponding LCAd structure on the space $\Omega(\widehat P)$
of K\"{a}hler differentials, given by Theorem \ref{thm:kahlerLCAd}.
It is not hard to check that the resulting LCAd's are isomorphic:
$\widehat{\Omega(P)}\simeq\Omega(\widehat{P})$.
\end{remark}

\subsection{PVA modules over $\mc V$ and LCAd modules over $\Omega(\mc V)$}\label{sec:5.3}

\begin{proposition}\label{prop:PVA-LCAd-modules}
Let $\mc V$ be a PVA, and consider the corresponding LCAd $\Omega(\mc V)$.
Let $M$ be a left $\mc V[\partial]$-module.
There is a bijective correspondence between:
\begin{enumerate}[(a)]
\item
the $\lambda$-actions of $\mc V$ on $M$ making $M$ a PVA module over $\mc V$;
\item
the $\lambda$-actions of $\Omega(\mc V)$ on $M$ making $M$ an LCAd module over $\Omega(\mc V)$.
\end{enumerate}
This correspondence associates to a PVA $\lambda$-action $f_\lambda m$ of $\mc V$ on $M$
the LCAd $\lambda$-action of $\Omega(\mc V)$ on $M$ given by
\begin{equation}\label{eq:action4}
(fdg)_\lambda m = f\cdot (g_\lambda m)
\,.
\end{equation}
\end{proposition}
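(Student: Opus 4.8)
The plan is to realize the correspondence through the universal property of the space of K\"ahler differentials, Proposition \ref{prop:univ-prop2}(b), and then to match the PVA-module axioms with the LCAd-module axioms of Definition \ref{def:module} by evaluating the latter on the generators $dg$. The key observation that drives everything is that, for a \emph{fixed} $m\in M$, condition (ii) of Definition \ref{def:module} states exactly that the map $\psi^m_\lambda\colon\Omega(\mc V)\to M[\lambda]$, $u\mapsto u_\lambda m$, is a right conformal homomorphism of $\mc V[\partial]$-modules, i.e.\ $\psi^m\in\RCHom_{\mc V}(\Omega(\mc V),M)$. Thus I can trade $\lambda$-actions of $\Omega(\mc V)$ on $M$ (satisfying (ii)) for families $\{\psi^m\}_{m}$ of right conformal homomorphisms, and then use that $\RCDer(\mc V,M)\simeq\RCHom_{\mc V}(\Omega(\mc V),M)$.

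For the direction (a)$\Rightarrow$(b), starting from a PVA-module I would fix $m$ and set $\eta^m_\lambda(g)=g_\lambda m$. The sesquilinearity of the $\lambda$-action in its first argument together with the right Leibniz rule \eqref{eq:Leibniz2} say precisely that $\eta^m$ is a right conformal derivation from $\mc V$ to $M$, i.e.\ satisfies \eqref{eq:rcder}. By Proposition \ref{prop:univ-prop2}(b) it extends uniquely to $\psi^m\in\RCHom_{\mc V}(\Omega(\mc V),M)$ with $\psi^m\circ d=\eta^m$, and evaluating the right conformal homomorphism property on $f\,dg$ gives $\psi^m_\lambda(f\,dg)=\big(\big|_{x=\partial}f\big)(g_{\lambda+x}m)$, which is the precise meaning of \eqref{eq:action4}. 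Setting $u_\lambda m:=\psi^m_\lambda(u)$ then makes condition (ii) hold by construction; condition (i) on a generator $dg$ reduces, using $\theta(dg)_\lambda(h)=\{g_\lambda h\}$ from \eqref{eq:kahler-theta}, to the left Leibniz rule \eqref{eq:Leibniz1} and the second-slot sesquilinearity of the $\lambda$-action; and condition (iii) on $dg,dh$ reduces, via $[df_\lambda dg]=d\{f_\lambda g\}$ of \eqref{eq:kahler-lambda}, to the Jacobi identity of the $\lambda$-action of the LCA $\mc V$ on $M$.

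For (b)$\Rightarrow$(a), given an $\Omega(\mc V)$-module I would set $g_\lambda m:=(dg)_\lambda m$ and read off the PVA-module axioms from the LCAd-module axioms evaluated on generators: first-slot sesquilinearity follows from (ii) with $a(\partial)=\partial$ (using $d\partial=\partial d$), the right Leibniz rule \eqref{eq:Leibniz2} from (ii) applied to $d(fg)=f\,dg+g\,df$, the left Leibniz rule \eqref{eq:Leibniz1} from (i) with $u=dg$, $a(\partial)=h$, and the module Jacobi identity from (iii) together with \eqref{eq:kahler-lambda}. The two constructions are mutually inverse because both assign $(dg)_\lambda m=g_\lambda m$ on the generators $dg$, and by the uniqueness clause of Proposition \ref{prop:univ-prop2}(b) any $\lambda$-action satisfying (ii) is determined by its values on the $\mc V[\partial]$-generating set $\{dg\}$.

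The hard part will be upgrading conditions (i) and (iii), which are checked only on the generators $dg$, to all of $\Omega(\mc V)$. I expect to handle this by proving a module analogue of Lemma \ref{lem:gener}: assuming (ii) holds identically, if (i) and (iii) hold on a $\mc V[\partial]$-generating set then they hold everywhere. This is the same bookkeeping as in Lemma \ref{lem:gener}, expanding a general pair $a(\partial)df,\,b(\partial)dg$ through the total formula \eqref{eq:tot-form} and using that each $\theta(dg)_\lambda$ is a conformal derivation of $\mc V$. Alternatively, and perhaps more cleanly, the whole statement can be routed through the gauge LCAd: combining Proposition \ref{prop:LCAd-mod} (with Remark \ref{rem:LCAd-mod}) and Proposition \ref{prop:PVAmod} (with Remark \ref{20251103:rem1}) reduces (a) and (b) to maps into $\mc G(\mc V,M)$, Proposition \ref{prop:univ-prop1} identifies $\mc V[\partial]$-module homomorphisms $\Omega(\mc V)\to\mc G(\mc V,M)$ with $\partial$-commuting derivations $\mc V\to\mc G(\mc V,M)$ via precomposition with $d$, and the residual LCA-homomorphism and anchor-preservation conditions then match on the generators $df$ since they are conditions between $\mc V[\partial]$-module maps.
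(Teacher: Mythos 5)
Your proposal is correct, and your primary route is genuinely different from the paper's, while your closing ``alternative'' is essentially the paper's actual proof. The paper works at the level of the whole action at once: by Proposition \ref{prop:PVAmod} a PVA $\lambda$-action is encoded as a map $\rho=(\phi,\sigma):\mc V\to\mc G(\mc V,M)$ which is a $\partial$-commuting derivation and an LCA homomorphism; Proposition \ref{prop:univ-prop1} then extends $\rho$ to a $\mc V[\partial]$-module homomorphism $\widetilde\rho:\Omega(\mc V)\to\mc G(\mc V,M)$, the LCA-homomorphism and anchor conditions are verified on the generators $df$, and Proposition \ref{prop:LCAd-mod} converts $\widetilde\rho$ back into an LCAd $\lambda$-action --- exactly the chain you sketch in your last sentences. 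Your primary route instead fixes $m$ and observes that axiom (ii) of Definition \ref{def:module} says $u\mapsto u_\lambda m$ lies in $\RCHom_{\mc V}(\Omega(\mc V),M)$, so that the extension from $\mc V$ to $\Omega(\mc V)$ is delivered by Proposition \ref{prop:univ-prop2}(b) applied to the right conformal derivation $g\mapsto g_\lambda m$; this is a perfectly valid and arguably more economical use of the universal property (it also makes transparent why \eqref{eq:action4} must read $(f\,dg)_\lambda m=\big(\big|_{x=\partial}f\big)(g_{\lambda+x}m)$, as you note). The price of your route is that axioms (i) and (iii), checked only on the generators $dg$, must be propagated to all of $\Omega(\mc V)$ by a module analogue of Lemma \ref{lem:gener}; you correctly identify this as the crux and only sketch it, but the bookkeeping is indeed the same as in parts (c) and (d) of that lemma (alternatively it follows from Lemma \ref{lem:ext}(a) with $\omega=0$ together with Lemma \ref{lem:gener} applied to the semidirect product). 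The paper's packaging through $\mc G(\mc V,M)$ buys a cleaner disposal of that extension step at the cost of the finite-generation caveats of Remarks \ref{rem:LCAd-mod} and \ref{20251103:rem1}, which your $\RCHom$ argument avoids entirely.
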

\begin{proof}
By Proposition \ref{prop:PVAmod}, to give a PVA $\lambda$-action of $\mc V$ on $M$ is the same as to give a map
$\rho:\mc V\to \mc G(\mc V,M)$ of the form $\rho=(\phi,\sigma)$ with $\phi:\,\mc V\to\CEnd(M)$ and $\sigma(f)=\{f_\lambda\,\cdot\}$, 
which is both a derivation from $\mc V$ to $\mc G(\mc V,M)$ commuting with $\partial$, and an LCA homomorphism 
(In the case when the differential algebra $\mc V$ or the $\mc V[\partial]$-module $M$
are not finitely generated this is not strictly speaking an LCA homomorphism, but the same arguments still holds, see Remark \ref{20251103:rem1}.)
By Proposition \ref{prop:univ-prop1} there exists a unique homomorphism of left $\mc V[\partial]$-modules 
$\widetilde{\rho}:\Omega(\mc V)\to\mc G(\mc V,M)$
such that $\widetilde{\rho}(gdf)=g\rho(f)$, $f,g\in\mc V$. 
Recalling the definition of the LCA structure of $\Omega(\mc V)$ given by \eqref{eq:kahler-lambda} 
and using the fact that $\rho$ is an LCA homomorphism we then get ($f,g\in\mc V$)
$$
\widetilde{\rho}([df_\lambda dg])=\widetilde{\rho}(d\{f_{\lambda} g\})=\rho(\{f_\lambda g\})
=[\rho(f)_\lambda\rho(g)]=[\widetilde{\rho}(df)_\lambda\widetilde{\rho}(dg)]\,.
$$
Moreover, recalling the anchor maps \eqref{eq:gauge-anchor} of $\mc G(\mc V,M)$ and \eqref{eq:kahler-theta} of $\Omega(\mc V)$, 
as well as the definition of $\sigma$ above, we have ($f,g\in\mc V)$
$$
\theta_{\mc G(\mc V,M)}\left(\widetilde{\rho}(df)\right)_\lambda g
=
\theta_{\mc G(\mc V,M)}(\phi(f),\sigma(f))_\lambda g
=
\sigma(f)_\lambda g=\{f_\lambda g\}=\theta_{\Omega(\mc V)}(df)_\lambda g\,.
$$
Hence, the map $\widetilde{\rho}:\Omega(\mc V)\to \mc G(\mc V,M)$ is a homomorphism of LCAd, which, 
by Proposition \ref{prop:LCAd-mod}  corresponds to a $\lambda$-action of $\Omega(\mc V)$ on $M$.
Conversely, given a $\lambda$-action of $\Omega(\mc V)$ on $M$,
by the same argument as above, we get a $\lambda$-action of $\mc V$ on $M$
defined by $f_\lambda m:=(df)_{\lambda}m$, $f\in\mc V$, $m\in M$.
\end{proof}

\section{Relation between Poisson vertex algebra cohomology
and Lie conformal algebroid cohomology}\label{sec:6}

We review in Section \ref{sec:6.1} the definition of the so called variational Poisson cohomology complex
and, in the following Section \ref{sec:6.2}, 
the definitions of the basic and reduced PVA cohomology complexes,
following \cite{DK11a,DK11b,BDHKV,BDK}.
Then, in Section \ref{sec:6.3}, we relate all these complexes to the corresponding LCAd cohomology
complexes defined in Section \ref{sec:3}.

\subsection{Variational PVA cohomology complex}\label{sec:6.1}

The \emph{variational PVA cohomology complex} $(\Gamma^\bullet(\mc V,M),d)$ associated to a PVA $\mc V$
and a PVA module $M$ is defined as follows.
We let 
\begin{equation}\label{eq:var-compl}
\Gamma^\bullet(\mc V,M)=\bigoplus_{k\geq0}\Gamma^k(\mc V,M)
\,,
\end{equation}
where $\Gamma^k(\mc V,M)$ is the space of $k$-\emph{cochains},
i.e. linear maps 
\begin{equation}\label{eq:k-cochainsPVA}
\gamma_{\lambda_1,\dots,\lambda_k}:\,\mc V^{\otimes k}\to
M[\lambda_1,\dots,\lambda_k]/\langle\partial+\lambda_1+\dots+\lambda_k\rangle
\,,
\end{equation}
mapping $a_1\otimes\dots\otimes a_k\mapsto\gamma_{\lambda_1,\dots,\lambda_k}(a_1,\dots,a_k)$,
satisfying the sesquilinearity conditions
\begin{equation}\label{eq:sesqPVA}
\gamma_{\lambda_1,\dots,\lambda_k}(a_1,\dots,(\partial a_i),\dots,a_k)
=
-\lambda_i\gamma_{\lambda_1,\dots,\lambda_k}(a_1,\dots, a_k)
\,,
\end{equation}
for all $i=1,\dots,k$,
the Leibniz rules
\begin{align}
& 
\gamma_{\lambda_1,\dots,\lambda_k} (a_1,\dots,b_ic_i,\dots,a_k) 
\label{eq:leibPVA}
=
\big(\big|_{x=\partial}b_i\big)
\gamma_{\lambda_1,\dots,\lambda_i+x,\dots,\lambda_k}
(a_1,\cdots,c_i,\dots,a_k) \\
\notag
&\qquad\qquad +
\big(\big|_{x=\partial}c_i\big)
\gamma_{\lambda_1,\dots,\lambda_i+x,\dots,\lambda_k}
(a_1,\dots,b_i,\dots,a_k)
\,,
\end{align}
for all $i=1,\dots,k$, and the skewsymmetry
\begin{equation}\label{eq:poly-skewPVA}
\gamma_{\lambda_1,\dots,\lambda_k}\big(a_1,\dots,a_k)
=
\sign(\pi)
\gamma_{\lambda_{\pi(1)},\dots,\lambda_{\pi(k)}}(a_{\pi(1)},\dots,a_{\pi(k)})
\,,
\end{equation}
for every permutation $\pi\in S_k$.
For example
$\Gamma^0(\mc V,M)=M/\partial M$,
while 
$\Gamma^1(\mc V,M)=\Der^\partial(\mc V,M)$ is the space of derivations from $\mc V$ to $M$ commuting with $\partial$.

The space $\Gamma^\bullet(\mc V,M)$ of cochains 
is a cohomology complex with the differential 
$d:\,\Gamma^k(\mc V,M)\to\Gamma^{k+1}(\mc V,M),\,k\geq0$, defined by
(cf. \eqref{eq:differential})
\begin{equation}\label{eq:differentialPVA}
\begin{split}
& 
(d\gamma)_{\lambda_1,\dots,\lambda_{k+1}}(a_1,\dots,a_{k+1})
=
\sum_{i=1}^{k+1}(-1)^{i+1}
{a_i}_{\lambda_i}
\big(
\gamma_{\lambda_1,\stackrel{i}{\check\dots},\lambda_{k+1}}(a_1,\stackrel{i}{\check\dots},a_{k+1})
\big) \\
&\qquad +
\sum_{\substack{i,j=1 \\ i<j}}^{k+1}(-1)^{i+j}
\gamma_{\lambda_i+\lambda_j,\lambda_1,\stackrel{i}{\check\dots}\stackrel{j}{\check\dots},\lambda_{k+1}}
\big(\{{a_i}_{\lambda_i}{a_j}\},a_1,\stackrel{i}{\check\dots}\stackrel{j}{\check\dots},a_{k+1}\big)
\,.
\end{split}
\end{equation}

It is proved in \cite{DK11a} that 
formula \eqref{eq:differentialPVA} defines a cohomology differential of $\Gamma^\bullet(\mc V,M)$.
We thus get the corresponding
\emph{variational PVA cohomology} of $\mc V$ with coefficients in $M$:
$$
H^k(\mc V,M)
:=
\ker\big(d\big|_{\Gamma^k(\mc V,M)}\big)\big/d\big(\Gamma^{k-1}(\mc V,M)\big)
\,.
$$
We refer to \cite[Sec.3.4]{BDK} for an interpretation of the low degree cohomology spaces.

\subsection{Basic and reduced PVA cohomologies}\label{sec:6.2}

If we do not quotient $M[\lambda_1,\dots,\lambda_k]$ 
by $\langle\partial+\lambda_1+\dots+\lambda_k\rangle$
in the definition \eqref{eq:k-cochainsPVA} of $k$-cochains,
we obtain the so-called \emph{basic} PVA cohomology complex of $\mc V$ with coefficients in $M$.

We thus let $\widetilde{\Gamma}^k(\mc V,M)$ be the vector space
consisting of all linear maps
\begin{equation}\label{eq:basicY}
\widetilde\gamma\colon \mc V^{\otimes k}\to M[\lambda_1,\dots,\lambda_k] \,,
\end{equation}
satisfying the sesquilinearity conditions \eqref{eq:sesqPVA},
the Leibniz rules \eqref{eq:leibPVA}
and the skewsymmetry conditions \eqref{eq:poly-skewPVA}
(where all the equations are now in the space $M[\lambda_1,\dots,\lambda_k]$).
Elements of $\widetilde{\Gamma}^k(\mc V,M)$ are called \emph{basic} $k$-\emph{cochains}.

It is proved in \cite{DK11a} that we have a cohomology differential
$\widetilde{d}:\,\widetilde{\Gamma}^k(\mc V,M)\to\widetilde{\Gamma}^{k+1}(\mc V,M)$
defined by the same formula as \eqref{eq:differentialPVA},
with $\widetilde{d}$ in place of $d$,
$\widetilde{\gamma}$ in place of $\gamma$,
where both sides are understood in $M[\lambda_1,\dots,\lambda_{k+1}]$.
We thus get the corresponding \emph{basic PVA cohomology} of $\mc V$ with coefficients in $M$
$$
\widetilde{H}^k(\mc V,M)
:=
\ker\big(\widetilde{d}\big|_{\widetilde{\Gamma}^k(\mc V,M)}\big)
\big/\widetilde{d}\big(\widetilde{\Gamma}^{k-1}(\mc V,M)\big)
\,.
$$

Moreover, we have an action of $\mb F[\partial]$ on $\widetilde{\Gamma}^k(\mc V,M)$ by letting
(cf. \eqref{eq:partial-basic})
\begin{equation}\label{eq:partial-basicPVA}
(\partial\widetilde\gamma)_{\lambda_1,\dots,\lambda_k}(a_1,\dots,a_k)
=
(\lambda_1+\dots+\lambda_k+\partial)
\widetilde\gamma_{\lambda_1,\dots,\lambda_k}(a_1,\dots,a_k)
\,.
\end{equation}
It is proved in \cite{DK11a} that
this action of $\partial$ commutes with the action of the differential:
$\widetilde d\circ\partial=\partial\circ\widetilde d$.
Hence, quotienting by the action of $\partial$ we get an induced cohomology complex,
$$
\overline{\Gamma}^\bullet(\mc V,M)=\widetilde{\Gamma}^\bullet(\mc V,M)/\partial\widetilde{\Gamma}^\bullet(\mc V,M)
\,,
$$
with the differential $\overline{d}:\,\overline{\Gamma}^k(\mc V,M)\to\overline{\Gamma}^{k+1}(\mc V,M)$
induced by $\widetilde{d}$, i.e. $\overline{d}\circ\pi=\pi\circ\widetilde{d}$,
where $\pi:\,\widetilde{\Gamma}^k(\mc V,M)\twoheadrightarrow
\widetilde{\Gamma}^k(\mc V,M)/\partial\widetilde{\Gamma}^k(\mc V,M)$
is the canonical quotient map.
Passing to cohomology we get the so called \emph{reduced PVA cohomology}
$$
\overline{H}^k(\mc V,M)
:=
\ker\big(\overline{d}\big|_{\overline{\Gamma}^k(\mc V,M)}\big)
\big/\overline{d}\big(\overline{\Gamma}^{k-1}(\mc V,M)\big)
\,.
$$

The following result relates basic, reduced and variational PVA complexes:
\begin{proposition}[{\cite{DK11a}, see also \cite[Lem.3.17]{BDK}}]\label{lem:Ytilde}
\begin{enumerate}[(a)]
\item
We have a well-defined homomoprphism of complexes
\begin{equation}\label{eq:piYtilde3}
\big(\widetilde{\Gamma}^\bullet(\mc V,M),\widetilde{d}\big)
\to 
\big({\Gamma}^\bullet(\mc V,M),d\big)
\,,\qquad
\widetilde \gamma\mapsto p\circ\widetilde \gamma
\,,
\end{equation}
where $p:\,M[\lambda_1,\dots,\lambda_k]\twoheadrightarrow
M[\lambda_1,\dots,\lambda_k]/\langle\partial+\lambda_1+\dots+\lambda_k\rangle$
is the canonical quotient map.
\item
The map \eqref{eq:piYtilde3} has kernel $\partial\widetilde{\Gamma}^\bullet(\mc V,M)$,
hence \eqref{eq:piYtilde3} induces an injective homomoprphism of complexes
$$
\big(\overline{\Gamma}^\bullet(\mc V,M),\overline{d}\big)\,\hookrightarrow\, \big({\Gamma}^\bullet(\mc V,M),d\big)
\,.
$$
\item
Suppose that, as a differential algebra, $\mc V$
is an algebra of differential polynomials in finitely many variables.
Then \eqref{eq:piYtilde3} is surjective at each degree.
Hence, we get an isomorphism of complexes
$$
\big(\overline{\Gamma}^\bullet(\mc V,M),\overline{d}\big)
\,\stackrel{\sim}{\longrightarrow}\,
\big({\Gamma}^\bullet(\mc V,M),d\big)
\,.
$$
\end{enumerate}
\end{proposition}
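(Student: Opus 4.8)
The plan is to mirror, essentially verbatim, the argument of Proposition~\ref{prop:reduced-LCAd-cohomology}, which established the identical statement for the basic, reduced and variational LCAd complexes: the three defining conditions \eqref{eq:sesqPVA}, \eqref{eq:leibPVA}, \eqref{eq:poly-skewPVA} of a PVA cochain play the role of the cochain conditions there, and the differential \eqref{eq:differentialPVA} is formally the same expression as the LCAd differential. (Alternatively, all three claims are proved in \cite{DK11a} and \cite[Lem.3.17]{BDK}, so one may simply cite those references.) Denote the map \eqref{eq:piYtilde3} by $p_*$, so $p_*(\widetilde\gamma)=p\circ\widetilde\gamma$.

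For part (a), the assignment $\widetilde\gamma\mapsto p\circ\widetilde\gamma$ manifestly sends a basic $k$-cochain to a variational one, since the linear quotient map $p$ carries the identities \eqref{eq:sesqPVA}, \eqref{eq:leibPVA}, \eqref{eq:poly-skewPVA} in $M[\lambda_1,\dots,\lambda_k]$ to the same identities in the quotient by $\langle\partial+\lambda_1+\dots+\lambda_k\rangle$. That $p_*$ commutes with the differentials is immediate from the coincidence of the two formulas \eqref{eq:differentialPVA}, together with the fact (already used to define $d$ on $\Gamma^\bullet$) that $\widetilde d$ maps $\langle\partial+\lambda_1+\dots+\lambda_k\rangle$ into $\langle\partial+\lambda_1+\dots+\lambda_{k+1}\rangle$, so that the variational differential is the descent of $\widetilde d$ along $p$. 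Thus \eqref{eq:piYtilde3} is a morphism of complexes.

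For part (b), the inclusion $\partial\widetilde{\Gamma}^\bullet(\mc V,M)\subseteq\ker(p_*)$ is read off directly from \eqref{eq:partial-basicPVA}: the element $\partial\widetilde\gamma$ lies in $\langle\partial+\lambda_1+\dots+\lambda_k\rangle$ and so is killed by $p$. Conversely, if $p\circ\widetilde\gamma=0$ then $\widetilde\gamma=(\partial+\lambda_1+\dots+\lambda_k)F$ for a unique $F$, uniqueness coming from the injectivity of $\partial+\lambda_1+\dots+\lambda_k$ on $M[\lambda_1,\dots,\lambda_k]$. One then checks, exactly as in Proposition~\ref{prop:reduced-LCAd-cohomology}, that $F$ inherits sesquilinearity (automatic from that injectivity), the Leibniz rules, and skewsymmetry; hence $F\in\widetilde{\Gamma}^k(\mc V,M)$ and $\widetilde\gamma=\partial F$. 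This gives $\ker(p_*)=\partial\widetilde{\Gamma}^\bullet(\mc V,M)$ and the induced injection $\overline{\Gamma}^\bullet(\mc V,M)\hookrightarrow\Gamma^\bullet(\mc V,M)$.

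For part (c) --- the surjectivity, which I expect to be the main obstacle --- I would first produce a section of $p$. Use the symmetrizing operator $\widetilde q$ of Proposition~\ref{prop:reduced-LCAd-cohomology}, which lowers each $\lambda_i$ by $k^{-1}(\Lambda+\partial)$ with $\Lambda=\lambda_1+\dots+\lambda_k$, annihilates $\langle\partial+\lambda_1+\dots+\lambda_k\rangle$, and hence factors through a map $q$ with $p\circ q=\id$. Given $\gamma\in\Gamma^k(\mc V,M)$, and using that $\mc V=\mb F[x_i^{(n)}]$ is freely generated as a differential algebra by the finitely many $x_i$, I would define the preimage $\widetilde\gamma$ on tuples of generators by $\widetilde\gamma_{\lambda_1,\dots,\lambda_k}(x_{i_1},\dots,x_{i_k})=q\big(\gamma_{\lambda_1,\dots,\lambda_k}(x_{i_1},\dots,x_{i_k})\big)$ and extend it to all of $\mc V^{\otimes k}$ via the Leibniz rules \eqref{eq:leibPVA} and sesquilinearity \eqref{eq:sesqPVA}. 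The crux is to verify that this extension is consistent, i.e. independent of the way an element of $\mc V$ is built from the generators; this is the PVA counterpart of the freeness hypothesis used in the LCAd case, and it holds precisely because the $x_i^{(n)}$ are algebraically independent, so that a cochain on a differential-polynomial algebra is freely determined by its values on the generators. Granting well-definedness, $p\circ\widetilde\gamma$ agrees with $\gamma$ on generators because $p\circ q=\id$, and therefore on all of $\mc V^{\otimes k}$ since both sides obey the same Leibniz and sesquilinearity rules. This yields surjectivity and, combined with (b), the isomorphism of complexes $\overline{\Gamma}^\bullet(\mc V,M)\stackrel{\sim}{\longrightarrow}\Gamma^\bullet(\mc V,M)$.
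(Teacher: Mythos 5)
Your proposal is correct: the paper gives no proof of this proposition, delegating it to \cite{DK11a} and \cite[Lem.3.17]{BDK}, and your argument is a faithful transcription of the paper's own proof of the LCAd analogue, Proposition~\ref{prop:reduced-LCAd-cohomology} (same kernel computation via injectivity of $\partial+\lambda_1+\dots+\lambda_k$ for $k\geq1$, same section $q$ of $p$ for surjectivity). The only point you rightly flag as needing care --- well-definedness of the Leibniz/sesquilinearity extension from values on the generators $x_i$ in part (c) --- is exactly the PVA counterpart of the freeness hypothesis in Proposition~\ref{prop:reduced-LCAd-cohomology}, and it holds for the stated reason (algebraic independence of the $x_i^{(n)}$, equivalently freeness of $\Omega(\mc V)$ as a $\mc V[\partial]$-module).
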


\subsection{Relation between PVA cohomology and LCAd cohomology}\label{sec:6.3}
Recall that Theorem \ref{thm:kahlerLCAd} associates to any PVA $\mc V$ an LCAd structure on the space of
K\"{a}hler differentials $\Omega(\mc V)$,
and, given a left $\mc V[\partial]$-module $M$,
Proposition \ref{prop:PVA-LCAd-modules} associates to any structure of a PVA module on $M$ (over $\mc V$),
a structure of an LCAd module on $M$ (over $\Omega(\mc V)$).
\begin{theorem}\label{thm:main}
Given a PVA $\mc V$ and a PVA module $M$ over $\mc V$, we have an isomorphism of complexes
between the PVA cohomology complex $(\Gamma^\bullet(\mc V,M),d_{\textrm{PVA}})$ defined in Section \ref{sec:6.1}
and the LCAd cohomology complex $(C^\bullet(\Omega(\mc V),M),d_{\textrm{LCAd}})$ defined in Section \ref{sec:3.1}:
$$
\Phi:(\Gamma^\bullet(\mc V,M),d_{\textrm{PVA}})\stackrel{\sim}{\longrightarrow} (C^\bullet(\Omega(\mc V),M),d_{\textrm{LCAd}})
\,.
$$
Explicitly, the isomorphism $\Phi$ maps the $k$-cochain $\gamma_{\lambda_1,\dots,\lambda_k}:\,\mc V^{\otimes k}\to
M[\lambda_1,\dots,\lambda_k]/\langle\partial+\lambda_1+\dots,\lambda_k\rangle$ to
the $k$-cochain $\Phi(\gamma)_{\lambda_1,\dots,\lambda_k}:\,\Omega(\mc V)^{\otimes k}\to 
M[\lambda_1,\dots,\lambda_k]/\langle\partial+\lambda_1+\dots+\lambda_k\rangle$ 
defined by
\begin{equation}\label{eq:Phi}
\Phi(\gamma)_{\lambda_1,\dots,\lambda_k}(a_1du_1,\dots,a_kdu_k)
=(|_{x_1=\partial}a_1)\dots(|_{x_k=\partial}a_k)\gamma_{\lambda_1+x_1,\dots,\lambda_k+x_k}(u_1,\dots,u_k)
\,,
\end{equation}
for every $a_1,u_1,\dots,a_k,u_k\in\mc V$.
\end{theorem}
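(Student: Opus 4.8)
The plan is to exhibit $\Phi$ as the unique extension, by the sesquilinearity axiom \eqref{eq:poly-lambda}, of the assignment sending the generators $du_1,\dots,du_k$ of $\Omega(\mc V)$ to $\gamma_{\lambda_1,\dots,\lambda_k}(u_1,\dots,u_k)$, and then to check separately that $\Phi$ is well defined, bijective, and compatible with the differentials. The starting observation is that $\Omega(\mc V)$ is generated as a $\mc V$-module (hence a fortiori as a $\mc V[\partial]$-module) by the elements $\{du\mid u\in\mc V\}$, so that any cochain $\varphi\in C^k(\Omega(\mc V),M)$ is completely determined by its values $\varphi_{\lambda_1,\dots,\lambda_k}(du_1,\dots,du_k)$ through \eqref{eq:poly-lambda}. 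Setting $a_i=1$ in \eqref{eq:Phi} gives $\Phi(\gamma)_{\lambda_1,\dots,\lambda_k}(du_1,\dots,du_k)=\gamma_{\lambda_1,\dots,\lambda_k}(u_1,\dots,u_k)$, and, since the adjoint of a multiplication operator $a\in\mc V$ is $a^*(\lambda)=a$ itself, formula \eqref{eq:Phi} is exactly the value forced by requiring \eqref{eq:poly-lambda} for the operators $a_i(\partial)=a_i$ acting on the generators $du_i$.

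I would then verify that $\Phi(\gamma)$ is a genuine element of $C^k(\Omega(\mc V),M)$, which amounts to three checks. First, $\Phi(\gamma)$ must respect the defining relations of $\Omega(\mc V)$, the essential one being $d(uv)=u\,dv+v\,du$; substituting this into \eqref{eq:Phi} and using the Leibniz rule \eqref{eq:leibPVA} of $\gamma$ reproduces the required two terms, where one uses that the iterated substitution $\big(\big|_{x=\partial}a\big)\big(\big|_{y=\partial}u\big)F(x+y)$ collapses to $\big(\big|_{x=\partial}(au)\big)F(x)$, which is just the Leibniz rule for the derivation $\partial$. Second, the skewsymmetry \eqref{eq:poly-skew} of $\Phi(\gamma)$ follows at once from the skewsymmetry \eqref{eq:poly-skewPVA} of $\gamma$, since permuting the arguments $a_idu_i$ permutes the pairs $(\lambda_i,u_i)$ while the scalar factors $a_i$ commute. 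Third, the remaining part of \eqref{eq:poly-lambda}, namely the sesquilinearity involving $\partial$ through the action $\partial(a\,du)=(\partial a)du+a\,d(\partial u)$ of \eqref{eq:kahler-Ad}, matches the sesquilinearity \eqref{eq:sesqPVA} of $\gamma$ together with the same $\partial$-Leibniz bookkeeping.

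For bijectivity I would write down the inverse explicitly: to $\varphi\in C^k(\Omega(\mc V),M)$ assign $\gamma_{\lambda_1,\dots,\lambda_k}(u_1,\dots,u_k):=\varphi_{\lambda_1,\dots,\lambda_k}(du_1,\dots,du_k)$. The three conditions \eqref{eq:sesqPVA}, \eqref{eq:leibPVA}, \eqref{eq:poly-skewPVA} defining a PVA cochain are read off, in reverse, from \eqref{eq:poly-lambda} applied to $\partial(du)=d(\partial u)$, from the relation $d(uv)=u\,dv+v\,du$ in $\Omega(\mc V)$, and from \eqref{eq:poly-skew}, respectively. That the two assignments are mutually inverse is immediate from the generation remark above: both $\Phi(\gamma)$ and any $\varphi$ satisfying \eqref{eq:poly-lambda} are determined by their restrictions to the generators $du_i$, on which the two constructions visibly undo one another. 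In degree one this is precisely the isomorphism $\Der^\partial(\mc V,M)\simeq\Hom_{\mc V[\partial]}(\Omega(\mc V),M)$ of Proposition \ref{prop:univ-prop1}.

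Finally I would check that $\Phi$ intertwines the differentials \eqref{eq:differentialPVA} and \eqref{eq:differential}, which is clean because the two formulas are identical in shape. Under the module correspondence \eqref{eq:action4} one has $(du)_\lambda m=u_\lambda m$, so the first sum of $(d_{\mathrm{LCAd}}\Phi(\gamma))_{\lambda_1,\dots,\lambda_{k+1}}(du_1,\dots,du_{k+1})$ equals the first sum of $(d_{\mathrm{PVA}}\gamma)_{\lambda_1,\dots,\lambda_{k+1}}(u_1,\dots,u_{k+1})$; and since the bracket on $\Omega(\mc V)$ satisfies $[{du_i}_{\lambda_i}du_j]=d\{{u_i}_{\lambda_i}u_j\}$ by \eqref{eq:kahler-lambda}, evaluating $\Phi(\gamma)$ on this generator via the defining formula turns the second sum into $\gamma(\{{u_i}_{\lambda_i}u_j\},\dots)$, i.e.\ the second sum of $d_{\mathrm{PVA}}\gamma$. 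Thus $d_{\mathrm{LCAd}}\Phi(\gamma)$ and $\Phi(d_{\mathrm{PVA}}\gamma)$ agree on the generators $(du_1,\dots,du_{k+1})$, hence everywhere by sesquilinearity. I expect the main obstacle to be the first well-definedness check: reconciling the PVA Leibniz rule \eqref{eq:leibPVA} with the relation $d(uv)=u\,dv+v\,du$ requires carefully tracking how the nested $\big|_{x=\partial}$ substitutions in \eqref{eq:Phi} recombine under the Leibniz rule for $\partial$, and this bookkeeping—rather than any conceptual difficulty—is where the computation concentrates.
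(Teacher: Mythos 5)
Your proposal is correct and follows essentially the same route as the paper: extend from the generators $du$ by sesquilinearity, check compatibility with the relation $d(uv)=u\,dv+v\,du$ via the Leibniz rule \eqref{eq:leibPVA}, verify \eqref{eq:poly-lambda} and \eqref{eq:poly-skew}, exhibit the inverse $\varphi\mapsto\varphi(du_1,\dots,du_k)$, and check commutation with the differentials on generators using \eqref{eq:kahler-lambda} and \eqref{eq:action4}. The only point worth making explicit, which the paper does and you leave implicit, is that the right-hand side of \eqref{eq:Phi} sends the ideal $\langle\partial+\lambda_1+\dots+\lambda_k\rangle$ into itself, so that $\Phi(\gamma)$ is well defined with values in the quotient.
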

\begin{proof}
First, note that
$\Phi(\gamma)$ defined by \eqref{eq:Phi} is compatible with the defining relations
\eqref{eq:defOmega} of
$\Omega(\mc V)$: for the first two relations in
\eqref{eq:defOmega} it follows by linearity, while for the third relation it is a consequence of the fact that $\gamma$ satisfies the Leibniz rules \eqref{eq:leibPVA}.
Note also that if 
$\gamma_{\lambda_1,\dots,\lambda_k}\big(u_1,\dots,u_k)
\in\langle\partial+\lambda_1+\dots+\lambda_k\rangle$,
then the right-hand side of \eqref{eq:Phi}
lies in $\langle\partial+\lambda_1+\dots+\lambda_{k+1}\rangle$ as well,
hence it vanishes in the quotient
$M[\lambda_1,\dots,\lambda_k]/\langle\partial+\lambda_1+\dots+\lambda_k\rangle$.
Hence, $\Phi(\gamma)$ is a well defined map $\Omega(\mc V)^{\otimes k}\to M[\lambda_1,\dots,\lambda_k]/\langle\partial+\lambda_1+\dots+\lambda_k\rangle$.

To show that $\Phi(\gamma)$ lies in $C^k(\Omega(\mc V),M)$
we need to check that it satisfies conditions \eqref{eq:poly-lambda} and \eqref{eq:poly-skew}.
Condition \eqref{eq:poly-lambda} follows from the sesquilinearity \eqref{eq:sesqPVA} of $\gamma$, while \eqref{eq:poly-skew} follows from the skewsymmetry
\eqref{eq:poly-skewPVA} of $\gamma$.

Injectivity of $\Phi$ is obvious: if $\Phi(\gamma)=0$, then
$$
\gamma_{\lambda_1,\dots,\lambda_k}(u_1,\dots,u_k)=\Phi(\gamma)_{\lambda_1,\dots,\lambda_k}(d u_1,\dots, d u_k)=0\,.
$$
As for surjectivity, it suffices to exhibit explicitly the  inverse map
$C^k(\Omega(\mc V),M)\to \Gamma^k(\mc V,M)$, given by
$$
\Phi^{-1}(\varphi)_{\lambda_1,\dots,\lambda_k}(u_1,\dots,u_k)
=\varphi_{\lambda_1,\dots,\lambda_k}(d u_1,\dots,d u_k)
\,.
$$

In order to complete the proof we need to show that $\Phi$ commutes with the differential: $\Phi(d_{\textrm{PVA}}\gamma)=d_{\textrm{LCAd}}\Phi(\gamma)$.
Since both sides of this equation lie in $C^{k+1}(\Omega(\mc V),M)$ and since the $\mc V[\partial]$-module $\Omega(\mc V)$ is generated by
elements of the form $du$, $u\in\mc V$,
it suffices to prove that
$\Phi(d_{\textrm{PVA}}\gamma)$ and $d_{\textrm{LCAd}}\Phi(\gamma)$
coincide when they are evaluated at elements
of the form $du_1,\dots,du_{k+1}$.
For this we have, by definitions \eqref{eq:differential} and \eqref{eq:differentialPVA}
of $d_{\textrm{LCAd}}$ and $d_{\textrm{PVA}}$ respectively, and the LCAd structure of $\Omega(\mc V)$ given by
\eqref{eq:kahler-theta} and \eqref{eq:kahler-lambda}, and
the LCAd module structure on $M$ given by \eqref{eq:action4}
\begin{align*}
&(d_{\textrm{LCAd}}\Phi(\gamma))_{\lambda_1,\dots,\lambda_{k+1}}(du_1,\dots, du_{k+1})
\\
&=
\sum_{i=1}^{k+1}(-1)^{i+1}
(du_i)_{\lambda_i}\Big(
\Phi(\gamma)_{\lambda_1,\stackrel{i}{\check\dots},\lambda_{k+1}}(du_1,\stackrel{i}{\check\dots},du_{k+1})
\Big) \\
&+
\sum_{\substack{i,j=1 \\ i<j}}^{k+1}(-1)^{i+j}
\Phi(\gamma)_{\lambda_i+\lambda_j,\lambda_1,\stackrel{i}{\check\dots}\stackrel{j}{\check\dots},\lambda_{k+1}}
\big([{du_i}_{\lambda_i}{du_j}],du_1,\stackrel{i}{\check\dots}\stackrel{j}{\check\dots},du_{k+1}\big)
\\
&=
\sum_{i=1}^{k+1}(-1)^{i+1}
{u_i}_{\lambda_i}\Big(
\gamma_{\lambda_1,\stackrel{i}{\check\dots},\lambda_{k+1}}(u_1,\stackrel{i}{\check\dots},u_{k+1})\Big)
\\
&+
\sum_{\substack{i,j=1 \\ i<j}}^{k+1}(-1)^{i+j}
\gamma_{\lambda_i+\lambda_j,\lambda_1,\stackrel{i}{\check\dots}\stackrel{j}{\check\dots},\lambda_{k+1}}
\big(\{{u_i}_{\lambda_i}{u_j}\},u_1,\stackrel{i}{\check\dots}\stackrel{j}{\check\dots},u_{k+1}\big)
\\
&=(d_{\textrm{PVA}}\gamma)_{\lambda_1,\dots,\lambda_{k+1}}(u_1,\dots,u_{k+1})
=\Phi(d_{\textrm{PVA}}\gamma)_{\lambda_1,\dots,\lambda_{k+1}}(du_1,\dots,du_{k+1})
\,.
\end{align*}
\end{proof}

\begin{remark}\label{rem:5.3}
The same formula \eqref{eq:Phi} defines an isomorphism between the basic complexes  
$$
(\widetilde{\Gamma}^\bullet(\mc V,M),\widetilde{d}_{\textrm{PVA}})\stackrel{\sim}{\longrightarrow} (\widetilde{C}^\bullet(\Omega(\mc V),M),\widetilde{d}_{\textrm{LCAd}})
\,,
$$
as well as an isomorphism between the reduced complexes
$$
(\overline{\Gamma}^\bullet(\mc V,M),\overline{d}_{\textrm{PVA}})\stackrel{\sim}{\longrightarrow} (\overline{C}^\bullet(\Omega(\mc V),M),\overline{d}_{\textrm{LCAd}})
\,.
$$
\end{remark}
\begin{remark}
Recall \cite{BDK} that if $R$ is an LCA and $M$ is an $R$-module, then we have an isomorphism of complexes $C^k(R,M)\stackrel{\sim}{\longrightarrow}\Gamma^k(S(R),M)$ from the LCA cohomology complex of $R$ to the variational PVA cohomology complex of $S(R)$ with coefficients in the same module $M$.
The analogous result does not seem to hold in the case of an LCAd $E$, if we consider the associated PVA $S_A(E)$ defined in Theorem \ref{thm:SE}. In fact,
we only have an injective map $\Phi:C^k(E,S_A(E))\hookrightarrow \Gamma^k(S_A(E),S_A(E))$ mapping $\varphi\in C^k(E,S_A(E))$ to the unique $k$-cochain $\Phi(\varphi)\in\Gamma^k(S_A(E),S_A(E))$ defined by setting
$\Phi(\varphi)|_{E^{\otimes k}}=\varphi$,
$\Phi(\varphi)_{\lambda_1,\dots,\lambda_k}(u_1,\dots, u_k)$ vanishes if one of the entries $u_i$ lies in $A$
and it is extended to $S_A(E)^{\otimes k}$ by the Leibniz rules \eqref{eq:leibPVA}.
This map is injective but not surjective, indeed the $\lambda$-bracket \eqref{eq:lambda-vvv} on $S_A(E)$ lies in $\Gamma^2(S_A(E),S_A(E))$, but  it does not belong to the image of $\Phi$ unless $\theta=0$.
\end{remark}
\begin{example}
Let $\mf g$ be a Lie algebra and consider the corresponding current LCA $R=\mb F[\partial]\mf g$ with $\lambda$-bracket $[a_\lambda b]=[a,b]$, for 
$a,b\in\mf g$ and extended to $R$ by sesquilinearity. The symmetric algebra $\mc V=S(R)$ is naturally a PVA.
The LCAd of K\"ahler differentials can be explicitly described as the free $\mc V[\partial]$-module generated by $d\mf g\simeq\mf g$, namely
 $\Omega(\mc V)=\mc V[\partial]d\mf g\simeq\mc V[\partial]\otimes \mf g$.
 In particular, we have the identifications
 $$
 R\simeq\mb F[\partial]\otimes\mf g\subset\mc V[\partial]\otimes\mf g\simeq\Omega(\mc V)
 \,,
 $$
 which provide
an injective LCA homomorphism from $R$ to
 $\Omega(\mc V)$ given by $a\mapsto da$, $a\in R$.
 The $\lambda$-bracket on $\Omega(\mc V)$ is uniquely extended from the $\lambda$-bracket on $R$ by \eqref{eq:tot-form}, using the anchor map
 $\theta:\Omega(\mc V)\to\CDer(\mc V)$, which is the left $\mc V[\partial]$-module homomorphism defined on generators by $\theta(da)_\lambda(f)=\{a_\lambda f\}$, $a\in\mf g$, $f\in S(R)=\mc V$, where $\{\cdot_\lambda\cdot\}$ denotes the $\lambda$-bracket of the PVA $\mc V$. By Theorem \ref{thm:main} we can compute the cohomology of the LCAd $\Omega(\mc V)$ with coefficients in a module $M$ by computing the
 variational PVA cohomology of $S(R)$ with coefficients in $M$, see \cite{BDK}.
 \end{example}

\begin{remark}
There is another relation between the variational PVA and the LCAd cohomology complexes.
Indeed, let $\mc V$ be a PVA which is finitely generated as a differential algebra,
and consider the LCAd $\CDer(\mc V)$ over $\mc V$ defined in Example \ref{ex:CDer-LCAd}.
It is straightforward to check that we have a morphism of complexes 
$$
\Psi:C^k(\CDer(\mc V),\mc V)\to \Gamma^k(\mc V,\mc V)\,,
$$
defined by ($\varphi\in C^k(\CDer(\mc V),\mc V)$, $f_1,\dots,f_k\in\mc V$)
$$
(\Psi\varphi)_{\lambda_1,\dots,\lambda_k}(f_1,\dots,f_k)
=\varphi_{\lambda_1,\dots,\lambda_k}(X_{f_1},\dots,X_{f_k})
\,,
$$
where $X_f=\{f_\lambda\,\cdot\}$ as defined in Example \ref{exa:Btilde}. 
\end{remark}


\end{document}